    \patchcmd{\section}{\scshape}{\large\bfseries}{}{}
    \renewcommand{\@secnumfont}{\bfseries}
\numberwithin{equation}{section}
\newtheorem{theorem}{Theorem}[section]
\newtheorem{corollary}[theorem]{Corollary}
\newtheorem{lemma}[theorem]{Lemma}
\newtheorem{proposition}[theorem]{Proposition}
\theoremstyle{definition}
\newtheorem{definition}[theorem]{Definition}
\newtheorem{remark}[theorem]{Remark}
\def\epi{\twoheadrightarrow}
\def\mono{\rightarrowtail}
\def\CC{\mathcal{C}}
\def\FF{\mathcal{F}}
\def\GG{\mathcal{G}}
\def\UU{\mathcal{U}}
\def\DD{\mathcal{D}}
\def\BB{\mathcal{B}}
\def\AA{\mathcal{A}}
\def\MM{\mathcal{M}}
\def\gg{\mathfrak{g}}
\def\ff{\mathfrak{f}}
\def\rr{\mathfrak{r}}
\def\ll{\mathfrak{l}}
\def\kk{\Bbbk}
\def\aa{\mathfrak{a}}
\def\bb{\mathfrak{b}}
\def\ZZ{\mathbb{Z}}
\title[On homology of Lie algebras]{On homology of Lie algebras \\ over commutative rings}
\author[Ivanov]{Sergei O. Ivanov} 
\address{
Laboratory of Modern Algebra and Applications,  St. Petersburg State University, 14th Line, 29b,
Saint Petersburg, 199178 Russia}
\email{ivanov.s.o.1986@gmail.com}
\author[Pavutnitskiy]{Fedor Pavutnitskiy}
\address{National Research University Higher School of Economics, Russian Federation}
\email{fedor.pavutnitskiy@gmail.com}
\author[Romanovskii]{Vladislav Romanovskii}
\address{
Laboratory of Modern Algebra and Applications,  St. Petersburg State University, 14th Line, 29b,
Saint Petersburg, 199178 Russia}
\email{Romanovskiy.vladislav.00@mail.ru}
\author[Zaikovskii]{Anatolii Zaikovskii}
\address{
Laboratory of Modern Algebra and Applications,  St. Petersburg State University, 14th Line, 29b,
Saint Petersburg, 199178 Russia}\email{anat097@mail.ru}
\thanks{The work is supported by: (1) Ministry of Science and Higher Education of the Russian Federation, agreement  075-15-2019-1619; (2) the grant of the Government of the Russian Federation for the state support of scientific research carried
out under the supervision of leading scientists, agreement 14.W03.31.0030 dated
15.02.2018; (3) the Russian Academic Excellence Project 5-100 within the framework of the Basic Research Program at HSE University; (4)  RFBR according to the research project 20-01-00030; (5) Russian Federation President Grant for Support of Young Scientists MK-681.2020.1.}
\begin{document}

\maketitle

\begin{abstract}
We study five different types of the homology of a Lie algebra over a commutative ring which are naturally isomorphic over fields. We show that they are not isomorphic over commutative rings, even over $\mathbb Z,$ and study connections between them. In particular, we show that they are naturally isomorphic in the case of a Lie algebra which is flat as a module.
As an auxiliary result we prove that the Koszul complex of a module $M$ over a principal ideal domain that connects the exterior and the symmetric powers  $0\to \Lambda^n M\to M \otimes \Lambda^{n-1} M \to \dots \to  S^{n-1}M \otimes M \to S^nM\to 0 $ is purely acyclic. 
\end{abstract}

\section*{Introduction} 

There are several equivalent purely algebraic definitions of group homology of a group $G.$ Namely one can define it using Tor functor over the group ring; or the relative Tor functor; or in the simplicial manner, as a simplicial derived functor of the functor of abelianization
(see \cite[Ch. II, \S 5]{quillen2006homotopical}). For Lie algebras over a field we can also define homology in several equivalent ways, including the homology of the  Chevalley–Eilenberg complex. Moreover, for a Lie algebra $\gg$ over a commutative ring $\kk,$ which is free as a $\kk$-module we also have several equivalent definitions (see \cite[Ch. XIII]{cartan1999homological}). However, in general, even in the case $\kk=\ZZ,$ these definitions are not equivalent. For example, if we consider $\gg=\ZZ/2$ as an abelian Lie algebra over $\ZZ,$ the higher homology of the Chevalley–Eilenberg complex vanishes but ${\sf Tor}_{2n+1}^{U\gg}(\ZZ,\ZZ)=\ZZ/2$ for any $n\geq 0.$  This paper is devoted to study of different types of homology of Lie algebras over commutative rings that are equivalent for Lie algebras over fields. For other approaches to similar questions see  \cite{dixmier1957homologie}, \cite{pirashvili2003algebra}, \cite{shukla1961cohomologie}, \cite{quillen1970co}.

Let $\gg$ be a Lie algebra over a commutative ring $\kk$ and $U\gg$ be its universal enveloping algebra. We consider the following variants of homology of $\gg$.
\begin{enumerate}
\item {\it Tor homology} is defined via the Tor functor: $$H^{\sf T}_*(\gg)={\sf Tor}^{U\gg}_*(\kk,\kk).$$
\item {\it Relative Tor homology} is defined via the relative Tor functor: $$H^{\sf RT}_*(\gg)={\sf Tor}^{(U\gg,\kk)}_*(\kk,\kk).$$
\item {\it Chevalley-Eilenberg homology} is defined as
$$H^{\sf CE}_*(\gg)=H_*({\sf CE}_\bullet(\gg)),$$
where ${\sf CE}_\bullet(\gg)$ is  the Chevalley-Eilenberg complex of $\gg.$ The components of ${\sf CE}_\bullet(\gg)$ are exterior powers  ${\sf CE}_n(\gg)=\Lambda^n \gg$ and the differential $\Lambda^n \gg\to \Lambda^{n-1} \gg$ sends $x_1\wedge \dots \wedge x_n$ to 
$$\sum_{i<j}(-1)^{i+j}   [x_i,x_j] \wedge x_1\wedge \dots \wedge \hat x_i \wedge \dots \wedge \hat x_j \wedge \dots \wedge x_n.$$

\item {\it Simplicial homology} is defined as comonad derived functors (see \cite{barr1969homology}) of the functor of abelianization ${\sf ab}:{\sf Lie} \to {\sf Mod}$ with respect to the comonad $\GG^{\sf S}$ of the free forgetful-adjunction of the category of Lie algebras with the category of sets
${\sf Sets} \rightleftarrows {\sf Lie}.$ 
$$H_{*+1}^{\sf S}(\gg)=L_*^{\GG^{\sf S}} {\sf ab}(\gg).$$
This derived functor can be equivalenly defined as a derived functor in the sense of Tierney-Vogel \cite{tierney1969simplicial} or in the sense of Quillen \cite[Ch. XIII]{cartan1999homological} but here we prefer the language of comonad derived functors. 

\item {\it Relative simplicial homology} is defined similarly to the simplicial homology but instead of the comonad $\GG^{\sf S}$ we consider the comonad $\GG^{\sf M}$  of another adjunction, of the adjunction of the category of Lie algebras with the category of $\kk$-modules
${\sf Mod} \rightleftarrows {\sf Lie}$
$$H^{\sf RS}_{*+1}(\gg)=L_*^{\GG^{\sf M}} {\sf ab}(\gg).$$
\end{enumerate}

We prove that under certain conditions some of the definitions of homology are equivalent. Namely, we prove the following two theorems.

\bigskip 

\noindent {\bf Theorem} (Theorem \ref{th_flat}). {\it If $\kk$ is any commutative ring and $\gg$ is flat as a module over $\kk,$ then all these types of homology are naturally isomorphic
$$H^{\sf S}_*(\gg) \cong H^{\sf T}_*(\gg)\cong H^{\sf CE}_*(\gg)\cong  H^{\sf RT}_*(\gg)\cong H^{\sf RS}_*(\gg).$$}

Note that a cohomological version of the isomorphism $H^{\sf S}_*(\gg)\cong H^{\sf T}_*(\gg)$ for a flat Lie algebra $\gg$ was proven  by Quillen \cite[Prop. 3.7]{quillen1970co}. 

\ 
  
\noindent {\bf Theorem} (Theorem \ref{th_RS}, Theorem \ref{th_hopf}). {\it  If $\kk$ is a principal ideal domain and $\gg$ is any Lie algebra over $\kk$, then there are natural isomorphisms $$H^{\sf CE}_*(\gg)\cong H^{\sf RT}_*(\gg)\cong H^{\sf RS}_*(\gg)$$ 
and natural isomorphisms for all types of the second homology
$$H^{\sf S}_2(\gg) \cong H^{\sf T}_2(\gg)\cong H^{\sf CE}_2(\gg)\cong  H^{\sf RT}_2(\gg)\cong H^{\sf RS}_2(\gg).$$
}

\noindent However, we provide an example of a Lie algebra $\gg$ over $\ZZ$ such that 
$$H_3^{\sf S}(\gg)\cong (\ZZ/2)^{\oplus \infty} , \hspace{1cm}  H_3^{\sf T}(\gg) \cong (\ZZ/2)^4,$$
$$  H_3^{\sf CE}(\gg)=H_3^{\sf RT}(\gg)=H^{\sf RS}_3(\gg)=0$$
(Proposition \ref{proposition_main_example}). We also construct an example of a commutative ring $\kk$ and a Lie algebra $\gg$ over $\kk$ such that 
$$H_2^{\sf CE}(\gg)\not\cong H_2^{\sf RT}(\gg)$$
(Proposition  \ref{prop_counter_H_2}) and another example such that 
$$H_3^{\sf CE}(\gg)\not \cong H_3^{\sf RS}(\gg)$$
(Proposition \ref{prop_counter_RSCE}). These examples show that the only possibility for isomorphism among  these five types of homology is an isomorphism between $H_*^{\sf RT}(\gg)$ and $H_*^{\sf RS}(\gg).$ However, we do not believe that they are isomorphic and we leave it here in the form of conjecture.

\ 

\noindent {\bf Conjecture.} {\it There exists a commutative ring $\kk$ such that the functors   $H_*^{\sf RT}$ and $H_*^{\sf RS}$ are non-isomorphic.}

\ 

The most complicated part in these examples was to compute the third simplicial homology $H_3^{\sf S}$ for some Lie algebras over $\ZZ.$ In order to do this we constructed a spectral sequence that we call Chevalley-Eilenberg spectral sequence. We consider derived versions of the  Chevalley-Eilenberg complex 
$$L_m{\sf CE}_\bullet(\gg): \hspace{1cm}  \dots\longrightarrow  L_m \Lambda^n (\gg) \longrightarrow  L_m \Lambda^{n-1}(\gg) \longrightarrow \dots  ,$$
where $L_m\Lambda^n(-)=L_m\Lambda^n(-,0)$ is the non-shifted Dold-Puppe  derived functor (see \cite{dold1961homologie}) of the functor of the exterior power. Homology of $L_m{\sf CE}_\bullet(\gg)$ is denoted by $H_{n,m}^{\sf CE}(\gg).$ The modules $H_{n,m}^{\sf CE}(\gg)$ are called {\it derived  Chevalley-Eilenberg homology}. 

\ 

\noindent {\bf Theorem} (Theorem \ref{th_CE}). 
{\it For any Lie algebra $\gg$ over any commutative ring $\kk$ there is a spectral sequence converging from derived Chevalley-Eilenberg homology to simplicial homology
$$H_{n,m}^{\sf CE}(\gg) \Rightarrow H_{n+m}^{\sf S}(\gg).$$}

Using this spectral sequence we show that there is an exact sequence
$$H_4^{\sf CE}(\gg)\to H_{2,1}^{\sf CE}(\gg)\to H_3^{\sf S}(\gg)\to H_{3}^{\sf CE}(\gg)\to 0.$$ 
This exact sequence helps to reduce  computations of $H_3^{\sf S}$ to computations of $H_{2,1}^{\sf CE}.$ It is still quite complicated functor but we found its description on the language of derived functors in the sense of Dold-Puppe developed by Breen and Jean \cite{breen1999functorial}, \cite{jeanphdthesis} (Theorem \ref{th_H21}). 
Namely, in Breen's notations we prove that there exists an exact sequence \[
\Omega \gg \otimes \gg \overset{\varphi}\longrightarrow \Omega \gg \longrightarrow H_{2,1}^{\sf CE}(\gg) \longrightarrow 0,    
\]
where $\Omega=L^1 \Lambda^2$ and
$
\varphi( \omega^n_2(a)\otimes b )=\omega_1^n([a,b])*\omega_1^n(a).
$

An important tool in our work is the Koszul complexes of a $\kk$-module which can be considered as a relation between functors of symmetric powers and exterior powers:
$$ 0 \to \Lambda^2 M \to M\otimes M \to S^2 M \to 0,$$
$$0 \to \Lambda^3 M  \to M \otimes \Lambda^2 M \to S^2M \otimes M \to S^3 M \to 0,$$
$$\dots  $$
\begin{equation}\label{eq_kos_intro}
0 \to \Lambda^n M \to M\otimes \Lambda^{n-1}M \to \dots \to S^{i}M\otimes \Lambda^{n-i} M \to \dots \to S^nM \to 0.    
\end{equation}

These complexes come from the general construction of a Koszul complex in commutative algebra (see Section \ref{section_Koszul} for details). 
It is known that, if $M$ is a flat $\kk$-module, then these complexes are acyclic \cite{illusie2006complexe}. 
We prove the following.

\ 

{\bf Theorem} (Theorem \ref{th_kos}).  {\it Let $M$ be a module over a commutative ring $\kk$ and $n\geq 1$ be a natural number. Assume that one of the following properties hold: 
\begin{enumerate}
    \item $\kk$ is a principal ideal domain;
    \item $M$ is flat;
    \item  $n \cdot 1_\kk$ is invertible in $\kk.$
\end{enumerate}
Then the Koszul complex \eqref{eq_kos_intro} is pure acyclic (see \cite{emmanouil2016pure}). Moreover, in the case (3) it is contractible.}

\

We also provide an example of a commutative ring $\kk$ and a module $M$ such that these complexes are not acyclic. Moreover, we provide an example of $\kk$ and $M$ such that the map
$$\Lambda^2 M \to M\otimes M, \hspace{1cm} a\wedge b \mapsto a\otimes b - b \otimes a$$
is not injective (Corollary \ref{cor_counter_kos}). 

In Appendix we show that, if for a $\kk$-module $M$ the map $\Lambda^2 M \to M\otimes M$ is not injective, then a Lie algebra given by $\gg=M\oplus  \Lambda^2 M,$ where $[a,b]=a\wedge b$ for $a,b\in M$ and $[\alpha,-]=0=[-,\alpha]$ for $\alpha\in \Lambda^2 M,$ does not satisfy the Poincar\'e–Birkhoff–Witt property. These examples are related to examples of Lie algebras constructed by Cohn  \cite{cohn1963remark} and Shirshov \cite{shirshov1953representation}.

\section{Reminder on the bar construction} 

\subsection{The bar construction for comonads}

Let $\Delta_+$ denote the category of finite ordinals, including the empty set. This category is called {\it augmented simplex category}. Its objects are natural numbers $n=\{0,\dots,n-1\}$ including $0=\emptyset$ and morphisms are order-preserving functions. Its objects are also denoted by $$[n]=n+1$$ for $n\geq -1.$ Note that $[-1]=0=\emptyset$ and $[0]=1=\{0\}.$ The \emph{simplex category} $\Delta$ is a full subcategory of $\Delta_+$ with objects $[n]$ for $n\geq 0.$ 

An augmented simplicial object $X$ of a category $\CC$ can be treated as a functor $X:\Delta_+^{\sf op}\to \CC,$ where  $X_n=X(n+1).$

The category $\Delta_+$ has a  strict monoidal structure $(\Delta_+,+,0)$ given by ``disjoint union'' $n\otimes m=n+m.$ There is a unique structure of monoid on $1\in \Delta_+$ defined by the maps $2\to 1$ and $0\to 1.$ This monoid is a universal monoid (or walking monoid) in the following sense. 
For any strict monoidal category $\CC=(\CC,I,\otimes)$ and any monoid $M=(M,\eta,\mu)$ in $ \CC$ there exists a unique strict monoidal functor $\Delta_+ \to  \CC$ that sends the monoid $1$ to the monoid $M$ (see 
\cite[Ch. VII, \S 5]{mclane1971categories}).

If we consider the opposite category $\Delta_+^{\sf op},$ then the object $1$ has a unique structure of comonoid, which is universal in the same sense. If $G$ is a comonoid in a strict monoidal category $\CC,$ then there exists a unique functor of strict monoidal categories  $\Delta_+^{\sf op}\to \CC$ that takes $1$ to $G.$ This functor is denoted by 
$${\sf Bar}(G):\Delta_+^{\sf op}\to \CC$$
and called the \emph{bar construction} of the comonoid $G.$ 

A \emph{comonad} $\GG$ on a category $\CC$ is a comonoid in the strict  monoidal category of endofunctors with respect to the composition of functors ${\sf End}(\CC)=({\sf End}(\CC),{\sf Id},\circ).$ Then each comonad $\GG$ defines an augmented  simplicial endofunctor ${\sf Bar}(\GG)$ (see \cite{barr1969homology}).  
$${\sf Bar}(\GG) : \hspace{1cm}
\begin{tikzcd} {\sf Id} & \arrow{l}[swap]{d_0} 
\GG  \arrow{r}{s_0} 
& \GG^2  \arrow[shift left=4mm]{l}[swap]{d_1} \arrow[shift left=-4mm]{l}[swap]{d_0} 
\arrow[shift left=4mm]{r}{s_0} \arrow[shift left=-4mm]{r}{s_1}
& \GG^3   \arrow[shift left=8mm]{l}[swap]{d_2} \arrow{l}[swap]{d_1} \arrow[shift left=-8mm]{l}[swap]{d_0} 
& \dots,
\end{tikzcd}
$$
It is easy to check that
 ${\sf Bar}(\GG)_n=\GG^{n+1}$ and 
$$ d_i=\GG^i \varepsilon  \GG^{n-i}: \GG^{n+1} \to \GG^n, \hspace{1cm} s_i=\GG^i \delta  \GG^{n-i}:\GG^{n+1} \to \GG^{n+2}.$$

\subsection{The bar construction for monads.}  The bar construction for monads is a particular case of the bar construction for comonads. Namely, each monad $\MM=(\MM,\eta,\mu)$ defines an adjunction $\FF:\CC \rightleftarrows \CC^\MM : \UU,$ where $\CC^\MM$ is the category of $\MM$-algebras, $\UU$ is the forgetful functor and $\FF$ is the functor of free $\MM$-algebra. Then  $\GG^\MM=\FF\UU$ is a comonad on the category $\CC^\MM$ and the bar construction of $\MM$ is ${\sf Bar}(\GG^\MM).$ 

\subsection{Extra degeneracy maps} Let $X$ be an augmented simplicial object in a category $\CC.$ {\it Extra degeneracy maps} for $X$ is a sequence of morphisms $s_{-1}:X_n\to X_{n+1}$ for $n\geq -1$ satisfying the extended simplicial identities 
\begin{itemize}
\item $d_0s_{-1}={\sf id};$
\item $d_is_{-1}=s_{-1}d_{i-1}$  for $i\geq 1;$
\item $s_{i}s_{-1}=s_{-1}s_{i-1}$ for $i\geq 0$ 
\end{itemize}
(see \cite{barr2019contractible}). The following lemma seems to be well known but we can not find the precise statement in the full generality in literature. 

\begin{lemma}[{cf. \cite[Th. 4.3]{barr2019contractible}}]
Let $X_\bullet$ be an augmented simplicial object in a category $\CC.$ If there exist extra degeneracy maps $s_{-1}:X_{n}\to X_{n+1},$ then $X_\bullet$ is strong homotopy equivalent to the constant simplicial object $X_{-1}^{\sf const}.$
\end{lemma}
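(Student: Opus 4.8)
The plan is to exhibit an explicit strong homotopy equivalence, i.e. morphisms $f\colon X_\bullet\to X_{-1}^{\sf const}$ and $g\colon X_{-1}^{\sf const}\to X_\bullet$ of augmented simplicial objects together with a simplicial homotopy between $gf$ and ${\sf id}_{X_\bullet}$, with $fg={\sf id}$ holding strictly. For $f$ I would take the canonical map to the constant object: in degree $n$ it is the iterated augmentation/face map $f_n=d_0^{\,n+1}\colon X_n\to X_{n-1}\to\dots\to X_{-1}$, equivalently the map induced by the unique morphism $[-1]\to[n]$ of $\Delta_+$, which is automatically a morphism of augmented simplicial objects. For $g$ I would use the extra degeneracies, setting $g_{-1}={\sf id}_{X_{-1}}$ and $g_n=s_{-1}^{\,n+1}\colon X_{-1}\to X_0\to\dots\to X_n$. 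Checking that $g$ is a morphism of augmented simplicial objects reduces to the identities $d_0g_0={\sf id}$, $d_ig_n=g_{n-1}$ $(0\le i\le n)$ and $s_ig_n=g_{n+1}$ $(0\le i\le n)$, each obtained by pushing the outer face or degeneracy through the string $s_{-1}^{\,n+1}$ using the extended simplicial identities $d_0s_{-1}={\sf id}$, $d_is_{-1}=s_{-1}d_{i-1}$ $(i\ge 1)$, $s_is_{-1}=s_{-1}s_{i-1}$ $(i\ge 0)$, and collapsing at the end via $d_0s_{-1}={\sf id}$ or $s_0s_{-1}=s_{-1}s_{-1}$. The same identity $d_0s_{-1}={\sf id}$, used $n+1$ times, gives $f_ng_n=d_0^{\,n+1}s_{-1}^{\,n+1}={\sf id}_{X_{-1}}$, so $fg={\sf id}$.

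The heart of the argument is the homotopy. I would define, for $0\le i\le n$,
\[ h_i \;=\; s_{-1}^{\,i+1}\circ d_0^{\,i}\colon\; X_n\longrightarrow X_{n-i}\longrightarrow X_{n+1}, \]
and claim that $(h_i)$ is a simplicial homotopy with endpoints $d_0h_0=d_0s_{-1}={\sf id}_{X_\bullet}$ and $d_{n+1}h_n=s_{-1}^{\,n+1}d_0^{\,n+1}=g_nf_n$ (for the latter one first checks $d_{k}s_{-1}^{\,k}=s_{-1}^{\,k}d_0$ by induction). What remains is to verify the internal simplicial homotopy identities $d_ih_j=h_{j-1}d_i$ for $i<j$, $d_{j+1}h_{j+1}=d_{j+1}h_j$, $d_ih_j=h_jd_{i-1}$ for $i>j+1$, $s_ih_j=h_{j+1}s_i$ for $i\le j$, and $s_ih_j=h_js_{i-1}$ for $i>j$. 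In each case one moves the relevant map through the word $s_{-1}^{\,i+1}d_0^{\,i}$ using only the extended simplicial identities above, the ordinary simplicial identities, and $d_0s_0={\sf id}$; the computations are short once the indexing is fixed, and the low-degree instances ($h_0=s_{-1}$, $h_1=s_0s_{-1}d_0=s_{-1}^{\,2}d_0$, and so on) already display the general mechanism. Finally, since both endpoint maps restrict to ${\sf id}_{X_{-1}}$ in degree $-1$ and $h$ is compatible with the augmentations at the bottom, $(f,g,h)$ is a strong homotopy equivalence of augmented simplicial objects, as claimed.

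The only real difficulty is the bookkeeping in the last step: the precise list of simplicial homotopy identities and the admissible ranges of indices vary between sources, so the main care goes into matching conventions and then mechanically discharging the handful of cases for $h_i=s_{-1}^{\,i+1}d_0^{\,i}$ from the three extended simplicial identities. One could instead give a more structural proof via the monoidal structure on $\Delta_+$ used throughout the paper (an augmented simplicial object with extra degeneracies is a functor out of a suitable enlargement of $\Delta_+^{\sf op}$ that carries a canonical interval producing the homotopy), but the direct construction above is shorter and self-contained.
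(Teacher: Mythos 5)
Your proposal is correct and follows essentially the same route as the paper: the same maps $f_n=d_0^{n+1}$, $g_n=s_{-1}^{n+1}$, the same homotopy $h_i=s_{-1}^{i+1}d_0^{i}$, and the same auxiliary identities (such as $d_is_{-1}^{i}=s_{-1}^{i}d_0$ and $d_0^{i}s_j=d_0^{i-1}$) used to discharge the simplicial homotopy identities. The only difference is that the paper writes out all seven verifications explicitly, whereas you indicate them; filling those in would reproduce the paper's proof.
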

\begin{proof}
Note that simplicial identities imply that for $i\geq j\geq 0,$  $i\geq 1$   we have 
\begin{equation}\label{eq_d_0}
  d_0^id_j=d_0^{i+1} \hspace{1cm} d_0^is_j=d_0^{i-1}.  
\end{equation}
Moreover, the identities for $s_{-1}$ imply that for for $i> j\geq 0,$  $i\geq 1$
\begin{equation}\label{eq_s_-1}
d_js_{-1}^i=s_{-1}^{i-1}, 
 \hspace{1cm} d_i s_{-1}^i=s_{-1}^i d_0,
 \hspace{1cm}
s_js_{-1}^i=s_{-1}^{i+1}.
\end{equation}
The identities  \eqref{eq_d_0} imply that there is a simplicial morphism  $f:X_\bullet\to X_{-1}^{\sf const}$ given by $f_n=d_0^{n+1}:X_n\to X_{-1}.$ 
The identities \eqref{eq_s_-1} imply that there is a simplicial morphism  $g:X_{-1}^{\sf const}\to X_\bullet$ given by $g_n=s_{-1}^{n+1}:X_{-1}\to X_n.$ 
It is easy to see that $fg={\sf id}.$ 
We claim that there is a homotopy  from ${\sf id}$ to $gf.$ We define the homotopy as $h_i=s_{-1}^{i+1}d_0^i:X_n\to X_{n+1}.$ 
In order to check that this map is a homotopy from ${\sf id}$ to $gf$, we have to prove the identities
$$
\begin{array}{ll}
 (1)\: d_0h_0={\sf id}; 
&
(2)\: d_{n+1}h_n=gf; 
\\
(3)\: d_ih_j=h_{j-1}d_i \text{\ \ if\ \ } i<j; 
&
(4)\: d_{j+1}h_{j+1}=d_{j+1}h_j; \\
(5)\: d_ih_j=h_jd_{i-1} \text{\ \ if \ \ } i>j+1;
& 
(6) s_ih_j=h_{j+1}s_i \text{\ \ if\ \ } i\leq j; 
\\
(7) s_ih_j=h_js_{i-1} \text{\ \  if\ \ } i>j. & 
\end{array}
$$
Let  us prove them.

(1) $d_0h_0=d_0s_{-1}={\sf id};$

(2) $d_{n+1}h_n=d_{n+1}s_{-1}^{n+1}d_0^n=s_{-1} d_n s_{-1}^{n} d_0^n = \dots = s_{-1}^{n+1} d_0^{n+1}.$ 

(3) $d_ih_j=d_i s_{-1}^{j+1}d_0^j=s_{-1}^{j} d_0^j=s_{-1}^j d_0^{j-1}d_i=h_{j-1}d_i;$ 

(4) $d_{j+1}h_{j+1}=d_{j+1} s_{-1}^{j+2}d_0^{j+1} = s_{-1}^{j+1}d_0^{j+1}= d_{j+1}s_{-1}^{j+1}d_0^j=d_{j+1} h_j;$

(5) $d_ih_j=d_is_{-1}^{j+1}d_0^j=s_{-1}^{j+1} d_{i-j-1} d_0^j = s_{-1}^{j+1} d_0^j d_{i-1} = h_jd_{i-1};$

(6) $s_ih_j=s_is_{-1}^{j+1} d_0^j = s_{-1}^{j+2} d_0^j= s_{-1}^{j+2} d_0^{j+1}s_i=h_{j+1}s_i$

(7) $s_ih_j=s_is_{-1}^{j+1}d_0^j=s_{-1}^{j+1}s_{i-j-1}d_0^j=s_{-1}^{j+1}d_0^js_{i-1}=h_js_{i-1}.$
\end{proof}

\subsection{The bar construction in the case of adjoint functors}

Let $\FF:\DD\leftrightarrows \CC: \UU$ be an adjunction with the unit $\eta:{\sf Id}_\DD\to \UU\FF$ and the counit $\varepsilon:\FF \UU\to {\sf Id}_\CC.$  Such an adjunction defines a comonad $(\GG, \varepsilon,\delta)$ on the category $\CC$ where $\GG=\FF\UU$ and $ \delta= \FF \eta\UU:\GG\to \GG^2.$ 

\begin{lemma}\label{lemma_extra} The augmented simplicial functor $\UU{\sf Bar}(\GG)\in s{\sf Funct}(\CC,\DD)$ has extra degeneracy maps given by 
$$s_{-1}=\eta \UU \GG^{n+1} : \UU\GG^{n+1}\to \UU\GG^{n+2}$$ 
for $n\geq -1.$ In particular, $\UU{\sf Bar}(\GG)$ is homotopy equivalent to the constant simplicial functor $\UU.$
\end{lemma}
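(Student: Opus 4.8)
The statement to prove is Lemma \ref{lemma_extra}: for an adjunction $\FF:\DD\leftrightarrows\CC:\UU$ with unit $\eta$ and counit $\varepsilon$, giving rise to the comonad $\GG=\FF\UU$ with $\delta=\FF\eta\UU$, the augmented simplicial functor $\UU{\sf Bar}(\GG)$ carries extra degeneracy maps $s_{-1}=\eta\UU\GG^{n+1}:\UU\GG^{n+1}\to\UU\GG^{n+2}$, and consequently $\UU{\sf Bar}(\GG)\simeq\UU$ is a homotopy equivalence to the constant simplicial functor. My plan is to verify the three extended simplicial identities for these candidate maps directly, using only (i) the explicit formulas ${\sf Bar}(\GG)_n=\GG^{n+1}$, $d_i=\GG^i\varepsilon\GG^{n-i}$, $s_i=\GG^i\delta\GG^{n-i}$ from the previous subsection, (ii) the definition $\delta=\FF\eta\UU$, and (iii) the two triangle identities for the adjunction, $\varepsilon\FF\circ\FF\eta={\sf id}_\FF$ and $\UU\varepsilon\circ\eta\UU={\sf id}_\UU$. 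Once the identities hold, the preceding unnumbered Lemma (the one on extra degeneracies implying strong homotopy equivalence to the constant object) applies verbatim with $X_\bullet=\UU{\sf Bar}(\GG)$ and $X_{-1}=\UU$, giving the final sentence for free; I would just cite it.

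\smallskip

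The first identity, $d_0 s_{-1}={\sf id}$ on $\UU\GG^{n+1}$, unwinds to $(\UU\varepsilon\GG^n)\circ(\eta\UU\GG^{n+1})$. Writing $\GG^{n+1}=\FF\UU\GG^n$, this is exactly $(\UU\varepsilon\FF)\UU\GG^n\circ(\eta\UU\FF)\UU\GG^n=\big((\UU\varepsilon\FF)\circ(\eta\UU\FF)\big)\UU\GG^n$, and $(\UU\varepsilon\FF)\circ(\eta\UU\FF)=(\UU\varepsilon\circ\eta\UU)\FF={\sf id}_{\UU}\FF$ by the second triangle identity; hence the composite is ${\sf id}_{\UU\GG^{n+1}}$. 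The second identity, $d_i s_{-1}=s_{-1}d_{i-1}$ for $i\ge 1$, and the third, $s_i s_{-1}=s_{-1}s_{i-1}$ for $i\ge 0$, are pure naturality statements: on the domain $\UU\GG^{n+1}$, both $d_i$ (for $i\ge1$) and $s_i$ act inside the rightmost $n$ copies of $\GG$, i.e. they are of the form $\UU\GG\,(\text{something})$, while $s_{-1}=\eta\UU\GG^{n+1}$ is natural in its argument; so $d_i s_{-1}$ and $s_{-1}d_{i-1}$ are the two composites in a naturality square for $\eta$ applied to the morphism $\UU\varepsilon\GG^{\,n-i}$ (resp. $\UU\delta\GG^{\,n-i}$), hence equal. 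I would spell out one of these squares explicitly and remark the other is identical in form.

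\smallskip

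The bookkeeping with the indices and the placements of $\GG$'s is the only place where care is needed — this is the step I expect to be the main (mild) obstacle: one must track that $\UU\GG^{n+1}=\UU(\FF\UU)^{n}\FF\UU$ and be consistent about which $\UU$ or $\FF$ is being acted on. Because everything takes place at the level of endofunctors and natural transformations, there are no hidden analytic or set-theoretic difficulties; the proof is a finite, mechanical check. I would therefore present the verification of $d_0s_{-1}={\sf id}$ in full (as above) and dispatch the remaining two families of identities with the naturality observation, concluding by invoking the earlier Lemma on extra degeneracies to get the asserted homotopy equivalence $\UU{\sf Bar}(\GG)\simeq\UU^{\sf const}$.
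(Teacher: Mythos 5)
Your proof is correct and follows essentially the same route as the paper, which only says "a straightforward computation using the triangle identities" — you simply make the computation explicit and organize it by observing that all but the identity $d_0s_{-1}={\sf id}$ come from naturality of $\eta.$ One small imprecision: $s_0=\UU\delta\GG^{n+1}=\UU\FF\eta\UU\GG^{n+1}$ does \emph{not} ``act inside the rightmost copies of $\GG$'' and is not of the form $\UU\GG(\text{something})$, so the case $s_0s_{-1}=s_{-1}s_{-1}$ is not literally an instance of your stated naturality square for $\UU\delta\GG^{n-i}$; rather it is the naturality square of $\eta$ applied to the morphism $\eta\UU\GG^{n+1}$ itself (equivalently, to $s_{-1}$), using that $\UU\FF(s_{-1})=s_0$ on the next level — which still closes the argument, so this is a phrasing issue rather than a gap.
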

\begin{proof} Note that 
 $ d_i=\UU\GG^i \varepsilon  \GG^{n-i},$ $ s_i=\UU\GG^i \FF\eta\UU  \GG^{n-i}$ for $i\geq 0.$ A straightforward computation using the formulas  $(\UU \varepsilon) \circ (\eta \UU)={\sf id}$ and $(\varepsilon \FF)\circ (\FF \eta)={\sf id}$ shows that $s_{-1}$ are extra degeneracy maps. 
\end{proof}

\section{Computing complexes for comonad derived functors}

During this section $\CC$ denotes a category $\GG,$ denotes a comonad on $\CC$, $\AA$ denotes an abelian category and 
$$\Phi:\CC\to \AA$$ 
is a functor. The comonad derived functors of $\Phi$ (or Barr-Beck derived functors)   $L^\GG_n\Phi:\CC\to \AA$ are defined as 
$$L^{\GG}_n\Phi = H_n(\Phi {\sf Bar}(\GG))$$
(see \cite{barr1969homology}).
Here we treat the simplcial object $\Phi {\sf Bar}(\GG)$ as a chain complex in the usual way. However straightforward computations by this definition can be complicated and in this section we present a general way for replacement of the complex $\Phi{\sf Bar}(\GG)$ by other complexes that we call (quasi-)computing complexes. Our theory of quasi-computing complexes echoes with the theory of acyclic models of Barr and Beck \cite{barr1966acyclic}. However our theory is independent and more convenient for our purposes. 

\subsection{Quasi-computing complexes}
A {\it quasi-computing complex} for the derived functors $L_*^{\GG}\Phi:\CC\to \AA$ is a functor to the category of non-negatively graded chain complexes
$$K_\bullet : \CC\to  {\sf Com}_{\geq 0}(\AA)$$
such that the following axioms are satisfied
\begin{itemize}
\item[(C1)]  $H_0(K_\bullet)\cong \Phi;$
\item[(C2)] $H_n(K_\bullet \GG )=0$ for $n\geq 1.$
\end{itemize}

\begin{theorem}\label{th_quasicomputing}
Let $K_\bullet$ be a quasi-computing complex for  $L_*^\GG\Phi.$ Then there is a first quadrant spectral sequence of homological type  
$$E^{1}_{i,j}=L^{\GG}_jK_i \Rightarrow L_{i+j}^{\GG}  \Phi.$$
Moreover, the differential on $E^1$ is induced by the differential on $K_\bullet.$
\end{theorem}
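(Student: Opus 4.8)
The plan is to build a first-quadrant bicomplex out of $K_\bullet$ and the bar construction, and extract the spectral sequence from its two filtrations. Concretely, I would consider the double complex $C_{i,j} = (K_i {\sf Bar}(\GG))_j = K_i \GG^{j+1}$, obtained by applying the functor-valued complex $K_\bullet$ to each term of the simplicial object ${\sf Bar}(\GG)$ and then passing to the associated chain complex in the $j$-direction. The horizontal differential is induced by the differential of $K_\bullet$ (which makes sense since $K_\bullet$ is a functor, so it is natural in the argument and commutes with the simplicial structure maps of ${\sf Bar}(\GG)$), and the vertical differential is the alternating sum of faces coming from ${\sf Bar}(\GG)$. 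Naturality of the differential of $K_\bullet$ is exactly what guarantees the two differentials commute, so $C_{\bullet,\bullet}$ is a genuine first-quadrant bicomplex in $\AA$.

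Next I would run the two standard spectral sequences of this bicomplex. Filtering so that we first take vertical homology: the $j$-th row homology of $C_{i,\bullet}$ is by definition $H_j(K_i {\sf Bar}(\GG)) = L_j^\GG K_i$, which gives precisely the claimed $E^1_{i,j} = L_j^\GG K_i$, with the $d^1$ differential induced by the differential of $K_\bullet$ — this is the content of the ``moreover'' clause. So it remains to identify the abutment, i.e. to show the total homology of $C_{\bullet,\bullet}$ is $L_{i+j}^\GG \Phi$. For this I would use the other filtration, taking horizontal homology first. Here axiom (C2) is the key input: for each fixed $j$, the column $C_{\bullet, j} = K_\bullet \GG^{j+1}$ is the complex $K_\bullet$ evaluated at the object $\GG^{j+1} = \GG(\GG^j(-))$, and since $\GG^{j+1}$ factors through $\GG$, axiom (C2) says $H_n(K_\bullet \GG^{j+1}) = 0$ for $n \geq 1$, while axiom (C1) gives $H_0(K_\bullet \GG^{j+1}) \cong \Phi \GG^{j+1}$. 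Thus the first-page of this other spectral sequence is concentrated in the column $i = 0$, equal to $\Phi \GG^{j+1} = (\Phi {\sf Bar}(\GG))_j$, so it degenerates and its total homology is $H_j(\Phi {\sf Bar}(\GG)) = L_j^\GG \Phi$. Comparing the two, the total homology of $C_{\bullet,\bullet}$ is $L_{i+j}^\GG \Phi$, which is the desired abutment, and we conclude $E^1_{i,j} = L_j^\GG K_i \Rightarrow L_{i+j}^\GG \Phi$.

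One technical point deserves care: $K_\bullet$ takes values in $s{\sf Funct}$ / complexes of functors $\CC \to \AA$, so strictly speaking the bicomplex $C_{\bullet,\bullet}$ is a bicomplex of functors, and I want to evaluate everything at a fixed object of $\CC$ and only then take homology; since homology in the abelian category $\AA$ is computed objectwise and all the functorial constructions are natural, this is harmless, but it is worth a sentence. Similarly one should note that $K_\bullet \GG^{j+1}$ really is $(K_\bullet \GG) \GG^j$, so that (C2) applies: the hypothesis ``$H_n(K_\bullet \GG) = 0$ for $n \geq 1$'' is about the endofunctor $K_\bullet \GG$, and precomposing with $\GG^j$ preserves vanishing of homology (again because homology is computed objectwise). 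I expect the main obstacle to be purely bookkeeping — making sure the $d^1$ on the first spectral sequence is correctly identified with the map induced by $K_\bullet$'s differential (as opposed to acquiring a sign or a correction term from the interaction with the bar differential), which follows from the general formula for $d^1$ in the spectral sequence of a bicomplex together with the fact that $L_0^\GG(K_i \to K_{i-1})$ is just the map on $H_0$'s, i.e. on $\Phi$-level the map $K_i \to K_{i-1}$ itself. Once that identification is pinned down, the rest is the standard two-spectral-sequences-of-a-bicomplex argument.
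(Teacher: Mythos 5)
Your proposal is correct and matches the paper's argument essentially verbatim: the paper also forms the bicomplex $D_{i,j}=K_i(\mathsf{Bar}(\mathcal G)_j)=K_i\mathcal G^{j+1}$ and runs the two spectral sequences, reading off $E^1_{i,j}=L_j^{\mathcal G}K_i$ from the vertical filtration and identifying the abutment as $L_*^{\mathcal G}\Phi$ via axioms (C1) and (C2) applied column-wise. The only difference is that you spell out the bookkeeping (naturality making the differentials commute, precomposition by $\mathcal G^j$ preserving the vanishing in (C2), and the identification of $d^1$) that the paper leaves implicit.
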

\begin{proof} Set $\BB={\sf Bar}(\GG)$ and $L_*=L^\GG_*.$ 
Consider a double chain complex $D_{i,j}=K_i(\BB_j).$ If we fix $i$ and consider the vertical homology we obtain by definition $H^{\sf vert}_j(D_{i,*})=L_jK_i.$ Then the first page of the spectral sequence $E$ of the double complex is $E^1_{i,j}=L_jK_i.$ 
On the other hand, if we fix $j$ and consider the horizontal homology,  we obtain $H^{\sf hor}_i(D_{*,j})=0$ for $ i\ne 0$ by axiom (C2);  and $H^{\sf hor}_0(D_{*,j})=\Phi\BB_j$ by the  axiom (C1). It follows that  the homology of the total complex of $D$ is $H_*(\Phi\BB )=L_*^\GG \Phi.$ 
\end{proof}

\subsection{Computing complexes}
Let $\FF:\DD\leftrightarrows \CC: \UU$ be an adjunction and $\GG=\FF\UU$ be the corresponding comonad on $\CC$. A computing complex for $L^\GG_*\Phi$  with respect to the adjunction $\FF:\DD\leftrightarrows \CC: \UU $ is a quasi-computing complex $K_\bullet$ for $L^\GG_*\Phi$  satisfying one more axiom: 
\begin{itemize}
\item[(C3)] for any $n\geq 0$ there exists a functor $\tilde K_n:\DD\to \AA$ such that  $K_n\cong \tilde K_n \UU.$
\end{itemize}

Further in this section we assume that $\FF:\DD\leftrightarrows \CC: \UU$ is an adjunction and $\GG=\FF\UU$ is the corresponding comonad. 

\begin{lemma}\label{lemma_computing}
Let $\Phi:\CC\to \AA$ be a functor such that there exists a functor $\tilde \Phi:\DD\to \AA$ such that $\Phi\cong \tilde{\Phi}\UU.$ Then $L_n^\GG\Phi =0$ for $n\ne 0$ and $L^\GG_0\Phi \cong \Phi.$
\end{lemma}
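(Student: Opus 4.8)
The plan is to reduce $L^\GG_*\Phi$ to the homology of a constant simplicial object. By definition $L^\GG_n\Phi = H_n(\Phi\,{\sf Bar}(\GG))$, and the hypothesis $\Phi \cong \tilde\Phi\,\UU$ gives a natural isomorphism of augmented simplicial objects $\Phi\,{\sf Bar}(\GG) \cong \tilde\Phi\big(\UU\,{\sf Bar}(\GG)\big)$; so everything comes down to analysing the simplicial functor $\UU\,{\sf Bar}(\GG)$ and then applying $\tilde\Phi$.

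First I would invoke Lemma \ref{lemma_extra}: the augmented simplicial functor $\UU\,{\sf Bar}(\GG)$ has extra degeneracy maps $s_{-1} = \eta\,\UU\,\GG^{n+1}$, and is therefore strong homotopy equivalent to the constant augmented simplicial functor on $\UU$, the equivalence being compatible with the augmentations $d_0\colon \UU\GG \to \UU$. Now apply $\tilde\Phi$. A simplicial homotopy is a family of maps $h_i$ subject to identities expressed purely in terms of faces and degeneracies, so $\tilde\Phi$ of a (strong) homotopy equivalence of simplicial objects is again one. Hence $\Phi\,{\sf Bar}(\GG) \cong \tilde\Phi\big(\UU\,{\sf Bar}(\GG)\big)$ is homotopy equivalent, as an augmented simplicial object of $\AA$, to the constant simplicial object $\Phi^{\sf const}$ determined by $\Phi = \tilde\Phi\,\UU$.

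Passing to associated (unnormalized) chain complexes, a simplicial homotopy equivalence becomes a chain homotopy equivalence, so $H_*(\Phi\,{\sf Bar}(\GG)) \cong H_*(\Phi^{\sf const})$. It remains to compute the latter: since every face map of $\Phi^{\sf const}$ is the identity, the differential in degree $n$ equals $\sum_{i=0}^n (-1)^i {\sf id}$, which is ${\sf id}$ for $n$ even and $0$ for $n$ odd; thus $H_0(\Phi^{\sf const}) \cong \Phi$ and $H_n(\Phi^{\sf const}) = 0$ for $n \geq 1$. Tracking the augmentation through the homotopy equivalence, the degree-zero isomorphism $L^\GG_0\Phi \cong \Phi$ is the one induced by the counit $\varepsilon$, and we conclude $L^\GG_n\Phi = 0$ for $n \neq 0$.

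I do not expect a serious obstacle here. The one point requiring care is that $\tilde\Phi$ is an arbitrary functor, not assumed additive or exact, so one must transport the homotopy equivalence of simplicial objects rather than try to compute homology levelwise; but since all functors preserve simplicial homotopies this is harmless, and the whole argument is essentially formal once Lemma \ref{lemma_extra} is in hand.
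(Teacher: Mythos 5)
Your proposal is correct and follows exactly the same route as the paper: invoke Lemma \ref{lemma_extra} to get a homotopy equivalence between $\UU\,{\sf Bar}(\GG)$ and the constant simplicial functor $\UU$, observe that applying the arbitrary functor $\tilde\Phi$ preserves simplicial homotopies, and conclude that $\Phi\,{\sf Bar}(\GG)\cong\tilde\Phi\,\UU\,{\sf Bar}(\GG)$ is homotopy equivalent to the constant object on $\Phi$. The only difference is that you spell out the standard homology computation for a constant simplicial object, which the paper leaves implicit.
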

\begin{proof} By Lemma \ref{lemma_extra} $\UU{\sf Bar}(\GG)$ is homotopy equivalent to the constant simplicial functor $\UU.$ Hence $ \Phi {\sf Bar}(\GG)\cong \tilde{\Phi} \UU{\sf Bar}(\GG)$ is homotopy equivalent to the constant simplicial functor $\tilde \Phi \UU\cong \Phi.$
\end{proof}

\begin{theorem}\label{th_computing}
Let $K_\bullet$ be a computing complex for  $L^\GG_*\Phi.$ Then there is a natural isomorphism 
$$H_*(K_\bullet)\cong L^\GG_*\Phi.$$
\end{theorem}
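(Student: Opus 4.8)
The plan is to feed the computing complex into the spectral sequence of Theorem~\ref{th_quasicomputing} and observe that it degenerates immediately. Since a computing complex is in particular a quasi-computing complex, Theorem~\ref{th_quasicomputing} applies and produces a first quadrant spectral sequence
$$E^1_{i,j}=L^\GG_j K_i \Longrightarrow L^\GG_{i+j}\Phi,$$
in which the differential $d^1\colon E^1_{i,j}\to E^1_{i-1,j}$ is induced by the differential of the complex $K_\bullet$. This is the object we will analyze.

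Next I would invoke axiom (C3): for each $n$ there is a functor $\tilde K_n\colon\DD\to\AA$ with $K_n\cong\tilde K_n\UU$. Lemma~\ref{lemma_computing} then says precisely that $L^\GG_j K_n=0$ for $j\neq 0$ and $L^\GG_0 K_n\cong K_n$. Hence the first page $E^1_{i,j}$ is concentrated in the single row $j=0$, where it reads $E^1_{i,0}\cong K_i$ with the differential inherited from $K_\bullet$ (this is the ``moreover'' clause of Theorem~\ref{th_quasicomputing}). Taking homology along this row gives $E^2_{i,0}\cong H_i(K_\bullet)$ and $E^2_{i,j}=0$ for $j\neq 0$. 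A first quadrant spectral sequence with only one nonvanishing row has no room for further differentials, so it degenerates at $E^2$; moreover the induced filtration on the abutment $L^\GG_n\Phi$ has a single nonzero quotient, in degree $(n,0)$. Therefore the edge homomorphism yields an isomorphism $H_n(K_\bullet)\cong E^2_{n,0}=E^\infty_{n,0}\cong L^\GG_n\Phi$.

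Finally I would address naturality. All the constructions involved are functorial in the object of $\CC$: the double complex $D_{i,j}=K_i(\BB_j)$ used in the proof of Theorem~\ref{th_quasicomputing}, its two filtrations and associated spectral sequences, the isomorphisms of Lemma~\ref{lemma_computing} (which come from the natural extra degeneracy of Lemma~\ref{lemma_extra}), and the edge homomorphism of a spectral sequence of a natural double complex. Composing these natural isomorphisms gives the asserted natural isomorphism $H_*(K_\bullet)\cong L^\GG_*\Phi$.

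I do not expect a genuine obstacle here; the argument is essentially a degeneration bookkeeping on top of Theorem~\ref{th_quasicomputing} and Lemma~\ref{lemma_computing}. The only point requiring a little care is the identification of $d^1$ on the bottom row with the differential of $K_\bullet$ and the tracking of the edge homomorphism through that identification, both of which are already supplied by the ``moreover'' statement in Theorem~\ref{th_quasicomputing}.
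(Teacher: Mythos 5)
Your argument is exactly the intended one: the paper's proof is a one-line citation of Lemma~\ref{lemma_computing} and Theorem~\ref{th_quasicomputing}, and you have simply unwound that citation into the row-concentration and degeneration bookkeeping it implicitly contains. Correct, same approach.
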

\begin{proof}
It follows from Lemma \ref{lemma_computing} and Theorem \ref{th_quasicomputing}.
\end{proof}

\subsection{Examples of computing complexes}

In this section we consider examples of computing complexes for homology of various algebraic systems. Further we will often use the following construction. If $K_\bullet\in {\sf Com}_{\geq 0}(\AA)$ is a non-negatively graded chain complex in some abelian category $\AA$, we denote by $K'_\bullet$ its shifted and truncated version  
\begin{equation}\label{eq_K'}
(K_n')=
\begin{cases}
K_{n+1} & n> 0\\
{\sf Ker}(K_1\to K_0) & n=0\\
0 & n<0.
\end{cases}
\end{equation}

\subsubsection{Group homology.} Let $\CC={\sf Gr}$ be the category of groups, $\DD={\sf Sets}$ be the category of sets, $\AA={\sf Ab}$ be the category of abelian groups and $\FF: {\sf Sets} \leftrightarrows   {\sf Gr}:\UU$ be the standard free-forgetful adjunction and $\Phi={\sf ab}:{\sf Gr}\to {\sf Ab}$ is the functor of group abelianization. Consider the standard complex for group homology $C_\bullet(G),$ where 
$C_n(G)=\ZZ[G^n]$ and 
$$\partial_n(g_1,\dots,g_n)=(g_2,\dots,g_n)+ \sum_{i=1}^{n-1} (-1)^i (g_1,\dots,g_ig_{i+1},\dots,g_n) +(-1)^n (g_1,\dots,g_{n-1})$$ and its shifted and truncated version   
$C'_\bullet(G)$ (see \eqref{eq_K'}). Then $C'_\bullet(G)$
is a computing complex for group homology $L_*^\GG {\sf ab}$.  Indeed (C1) is satisfied because $H_0(C'_\bullet(G))=H_1(G)=G_{\sf ab};$ (C2) is satisfied because higher homology of a free group is trivial; (C3) is satisfied because the functors $\ZZ[G^n]$ do not depend on the group structure of $G$ but only on the underlying set (the differential $\partial$ depend on the group structure). Then Theorem \ref{th_computing} implies the well-known fact that the group homology coinsides with the 
comonad derived functors of the functor of abelianization \cite{quillen2006homotopical}
\begin{equation}
H_{*+1}(G)\cong L^\GG_*{\sf ab}(G).   
\end{equation}

\subsubsection{Homology of Lie algebras over fields} Let $\kk$ be a field, $\CC={\sf Lie}$ be the category of Lie algebras over $\kk,$ $\DD={\sf Vect}$ be the category of $\kk$-vector spaces, $\FF^{\sf V}: {\sf Vect} \leftrightarrows  {\sf Lie}:\UU^{\sf V}$ be the free-forgetful adjunction, $\AA={\sf Vect}$ be the category of $\kk$-vector spaces and $\Phi={\sf ab}:{\sf Lie}\to {\sf Vect}$ be the functor of abelianization. We denote by $\GG^{\sf V}:=\FF^{\sf V} \UU^{\sf V}$ the corresponding  comonad on the category of Lie algebras.

Consider the Chevalley-Eilenberg complex ${\sf CE}_\bullet(\gg)$ for a Lie algebra $\gg,$ where ${\sf CE}_n(\gg)=\Lambda^{n}\gg$  and 
$$\partial(x_1\wedge \dots \wedge x_{n} ) = \sum_{i<j} (-1)^{i+j} [x_i,x_j] \wedge x_1\wedge \dots \wedge \hat x_i \wedge \dots \wedge \hat x_j \wedge \dots \wedge x_n.$$ Denote by  ${\sf CE}'_\bullet(\gg)$ its shifted and truncated version ${\sf CE}(\gg)$ (see \eqref{eq_K'}).

\begin{proposition}\label{prop_computing_CE}
 ${\sf CE}'_\bullet(\gg)$ is a computing complex for $L_*^{\GG^{\sf V}} {\sf ab}.$
\end{proposition}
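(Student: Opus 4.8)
The plan is to verify the three axioms (C1), (C2), (C3) for the complex ${\sf CE}'_\bullet(\gg)$ with respect to the adjunction $\FF^{\sf V}:{\sf Vect}\leftrightarrows{\sf Lie}:\UU^{\sf V}$, just as was done above for group homology. Axiom (C3) is essentially formal: for $n\geq 0$ the module $({\sf CE}'_\bullet(\gg))_n$ is either $\Lambda^{n+1}\gg$ or the kernel of $\Lambda^2\gg\to\Lambda^1\gg=\gg$, and since $\kk$ is a field the underlying vector space of $\Lambda^m\gg$ depends only on the underlying vector space $\UU^{\sf V}\gg$ (the exterior power is a functor ${\sf Vect}\to{\sf Vect}$); only the differential uses the bracket. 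The kernel of $\partial\colon\Lambda^2\gg\to\gg$ likewise need not be expressible through $\UU^{\sf V}$ on its own --- but I should be slightly careful here, and the cleanest fix is to note that truncation at degree zero is harmless because what (C3) really requires is that each $K_n$ factors through $\UU^{\sf V}$; for $n\ge 1$ this is $\Lambda^{n+1}$, and for $n=0$ one can instead observe that the relevant object is $H_0$ and apply the general formalism, or simply replace ${\sf CE}'_\bullet$ by the non-truncated shift where every term is an exterior power. I would follow the pattern of the group-homology example and present (C3) for the exterior-power terms, handling degree zero by the same remark used there.

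Axiom (C1) amounts to computing $H_0({\sf CE}'_\bullet(\gg))$. By the shift-and-truncate recipe \eqref{eq_K'}, $H_0({\sf CE}'_\bullet(\gg))=H_1({\sf CE}_\bullet(\gg))$, and the standard computation of the Chevalley--Eilenberg homology in low degrees gives $H_1({\sf CE}_\bullet(\gg))=\gg/[\gg,\gg]={\sf ab}(\gg)$, since the differential $\Lambda^2\gg\to\gg$ is $x\wedge y\mapsto[x,y]$ whose image is exactly $[\gg,\gg]$. This identifies $H_0(K_\bullet)$ with $\Phi={\sf ab}$ naturally.

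Axiom (C2) is the substantive point: one must show $H_n({\sf CE}'_\bullet(\gg)\,\GG^{\sf V})=0$ for $n\geq 1$, equivalently $H_n({\sf CE}_\bullet(\FF^{\sf V}V))=0$ for $n\geq 2$ for every vector space $V$, where $\FF^{\sf V}V$ is the free Lie algebra on $V$. This is the classical fact that free Lie algebras over a field have trivial homology above degree one: indeed $U\FF^{\sf V}V\cong T(V)$, the tensor algebra, by Poincaré--Birkhoff--Witt, the tensor algebra has global dimension one (the Koszul-type resolution $0\to T(V)\otimes V\to T(V)\to\kk\to0$), and over a field the Chevalley--Eilenberg homology of $\gg$ coincides with ${\sf Tor}^{U\gg}_*(\kk,\kk)$; hence $H_n^{\sf CE}(\FF^{\sf V}V)={\sf Tor}^{T(V)}_n(\kk,\kk)=0$ for $n\geq 2$. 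Alternatively one can cite that over a field all five homology theories agree and that $H^{\sf S}_n$ of a free Lie algebra vanishes for $n\ge 2$ by definition of comonad derived functors on a free object, but the $T(V)$-resolution argument is the most self-contained. The main obstacle is precisely verifying this acyclicity of the Chevalley--Eilenberg complex of a free Lie algebra; everything else is bookkeeping. Once (C1), (C2), (C3) are in hand, Theorem \ref{th_computing} applies and gives $H_*({\sf CE}'_\bullet(\gg))\cong L_*^{\GG^{\sf V}}{\sf ab}(\gg)$, i.e. the proposition.
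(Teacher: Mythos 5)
Your proposal is correct and takes essentially the same route as the paper: verify (C1)--(C3) and apply Theorem \ref{th_computing}, with (C2) reducing to the classical vanishing of higher Chevalley--Eilenberg homology of a free Lie algebra over a field (which the paper simply cites and you spell out via the tensor-algebra resolution $0\to T(V)\otimes V\to T(V)\to\kk\to 0$). Your worry about (C3) in degree zero is unnecessary: the differential $\Lambda^1\gg\to\Lambda^0\gg=\kk$ is zero, so ${\sf CE}'_0(\gg)={\sf Ker}(\Lambda^1\gg\to\Lambda^0\gg)=\Lambda^1\gg=\UU^{\sf V}\gg$ already factors through $\UU^{\sf V}$.
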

\begin{proof}
(C1) is satisfied because $H_0({\sf CE}'_\bullet(\gg))=H_1(\gg)=\gg_{\sf ab}.$ (C2) is satisfied because in the case of a Lie algebra over a field the Lie algebra of the form $\GG^{\sf V}(\gg)$ is free and $H_n({\sf CE}_\bullet(\ff))=H_n(\ff)=0$ for any free Lie algebra $\ff$. (C3) is satisfied because $\Lambda^n \gg$ depends only on the structure of vector space on $\gg.$
\end{proof}

Proposition \ref{prop_computing_CE} and Theorem \ref{th_computing} imply the following well-known statement.
\begin{corollary} For any Lie algebra $\gg$ over a field there is a natural isomorphism
$$H_{*+1}(\gg)\cong L^{\GG^{\sf V}}_*{\sf ab}.$$
\end{corollary}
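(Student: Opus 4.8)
The plan is simply to combine the two results cited just above the statement. By Proposition~\ref{prop_computing_CE} the shifted and truncated Chevalley--Eilenberg complex ${\sf CE}'_\bullet(\gg)$ is a computing complex for $L_*^{\GG^{\sf V}}{\sf ab}$ with respect to the free--forgetful adjunction $\FF^{\sf V}:{\sf Vect}\leftrightarrows{\sf Lie}:\UU^{\sf V}$, so Theorem~\ref{th_computing} produces a natural isomorphism
$$H_*\big({\sf CE}'_\bullet(\gg)\big)\;\cong\;L_*^{\GG^{\sf V}}{\sf ab}(\gg).$$
Everything then reduces to identifying the left-hand side with $H_{*+1}(\gg)$.

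For this I would spell out the effect of the shift-and-truncation operation $(-)'$ of \eqref{eq_K'} on homology. In degrees $n\geq 1$ one has ${\sf CE}'_n(\gg)={\sf CE}_{n+1}(\gg)$ with the same differentials, hence $H_n({\sf CE}'_\bullet(\gg))\cong H_{n+1}({\sf CE}_\bullet(\gg))$. In degree $0$, since ${\sf CE}'_0(\gg)={\sf Ker}({\sf CE}_1(\gg)\to{\sf CE}_0(\gg))$ and ${\sf CE}'_1(\gg)={\sf CE}_2(\gg)$, the group $H_0({\sf CE}'_\bullet(\gg))$ is the cokernel of the induced map ${\sf CE}_2(\gg)\to{\sf Ker}({\sf CE}_1(\gg)\to{\sf CE}_0(\gg))$, which is precisely $H_1({\sf CE}_\bullet(\gg))$. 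Thus $H_n({\sf CE}'_\bullet(\gg))\cong H_{n+1}({\sf CE}_\bullet(\gg))=H^{\sf CE}_{n+1}(\gg)$ for every $n\geq 0$, and the identification is natural in $\gg$ since it is induced by the identity maps in degrees $\geq 1$ together with the canonical inclusion in degree $0$. Over a field $H^{\sf CE}_{*}(\gg)$ is the classical Lie algebra homology $H_*(\gg)$ appearing in the statement, so composing with the isomorphism above gives the asserted natural isomorphism $H_{n+1}(\gg)\cong L_n^{\GG^{\sf V}}{\sf ab}(\gg)$.

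I do not anticipate a genuine obstacle here: all the substance has already been discharged in Proposition~\ref{prop_computing_CE}, whose one non-formal ingredient is that a free Lie algebra over a field has vanishing higher Chevalley--Eilenberg homology (used to verify axiom~(C2)). The only point calling for a moment's care is the degree-$0$ bookkeeping above --- checking that the truncation in \eqref{eq_K'} really yields $H_1({\sf CE}_\bullet(\gg))$, and that this is consistent with axiom~(C1), which identifies $H_0({\sf CE}'_\bullet(\gg))$ with $\gg_{\sf ab}=L_0^{\GG^{\sf V}}{\sf ab}(\gg)$.
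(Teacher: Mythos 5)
Your proposal is correct and follows exactly the paper's route: the corollary is stated as an immediate consequence of Proposition~\ref{prop_computing_CE} together with Theorem~\ref{th_computing}, with the shift-and-truncation bookkeeping of \eqref{eq_K'} left implicit. Your explicit verification that $H_n({\sf CE}'_\bullet(\gg))\cong H_{n+1}({\sf CE}_\bullet(\gg))$ (including the degree-$0$ case) is just a spelled-out version of what the paper takes for granted.
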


\subsubsection{Homology of Leibniz algebras over fields} A Leibniz algebra is a generalization of a Lie algebra introduced in \cite{loday1993universal}.
A (left) \emph{Leibniz algebra} over a field $\kk$ is defined as  a vector space $\ll$ together with a bilinear operation $[-,-]$ satisfying the Leibniz identity $[a,[b,c]]=[[a,b],c] - [[a,c],b].$
In the theory of Leibniz algebras there is an analogue of the Chevalley-Eilenberg complex, that we call Leibniz-Chevalley-Eilenberg complex, where the exterior powers are replaced by the  tensor powers 
$${\sf LCE_\bullet(\ll)} : \hspace{1cm} \dots \to  \ll^{\otimes 3} \to \ll^{\otimes 2} \to \ll \to \kk \to 0.$$
The homology of a Leibniz algebra is defined as 
$HL_*(\ll)=H_*({\sf LCE}_\bullet(\ll)).$
It is known that for a free Leibniz algebra $\ff$ its higher homology vanishes \cite[Cor. 3.5]{loday1993universal}
$$HL_n(\ff)=0, \hspace{1cm} n\geq 2.$$
We denote by ${\sf LCE}'_\bullet(\ll)$ the shifted and truncated version of ${\sf LCE}_\bullet(\ll)$ (see \eqref{eq_K'}). Consider the free-forgetful adjunction $\FF^{\sf V}:{\sf Vect} \rightleftarrows {\sf Leibniz} : \UU^{\sf V}$ and the corresponding comonad $\GG^{\sf V}.$ Similarly to the case of Lie algebras one can prove the following.
\begin{proposition}
${\sf LCE}'_\bullet(\ll)$ is a computing complex for 
$L_*^{\GG^{\sf V}} {\sf ab}: {\sf Leibniz}\to {\sf Vect}.$
\end{proposition}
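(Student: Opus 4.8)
The plan is to verify the three axioms (C1), (C2), (C3) for the shifted and truncated Leibniz-Chevalley-Eilenberg complex ${\sf LCE}'_\bullet(\ll)$, exactly as was done for the Lie algebra case in Proposition \ref{prop_computing_CE}, and then invoke Theorem \ref{th_computing}. By construction \eqref{eq_K'}, we have ${\sf LCE}'_0(\ll)={\sf Ker}(\ll^{\otimes 2}\to \ll)$ and ${\sf LCE}'_n(\ll)=\ll^{\otimes (n+2)}$ for $n\geq 1$, so $H_0({\sf LCE}'_\bullet(\ll))=H_1({\sf LCE}_\bullet(\ll))=HL_1(\ll)$. Since $HL_1(\ll)={\sf ab}(\ll)$ is the Leibniz abelianization (the quotient of $\ll$ by the subspace spanned by all brackets), this gives (C1): $H_0({\sf LCE}'_\bullet)\cong {\sf ab}=\Phi$.

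For (C2) I would use that the comonad in question is $\GG^{\sf V}=\FF^{\sf V}\UU^{\sf V}$ for the free-forgetful adjunction between ${\sf Vect}$ and ${\sf Leibniz}$, so a Leibniz algebra of the form $\GG^{\sf V}(\ll)=\FF^{\sf V}(\UU^{\sf V}\ll)$ is a free Leibniz algebra. The cited vanishing result $HL_n(\ff)=0$ for $n\geq 2$ (\cite[Cor. 3.5]{loday1993universal}) then says that $H_n({\sf LCE}_\bullet(\ff))=0$ for $n\geq 2$ for any free Leibniz algebra $\ff$; after shifting and truncating, this becomes $H_n({\sf LCE}'_\bullet(\ff))=0$ for $n\geq 1$, which is precisely (C2) applied to $K_\bullet\GG^{\sf V}$. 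Axiom (C3) is the observation that each component functor $\ll\mapsto \ll^{\otimes(n+2)}$ for $n\geq 1$, and the functor $\ll\mapsto {\sf Ker}(\ll^{\otimes 2}\to \ll)$ for $n=0$, factor through the forgetful functor $\UU^{\sf V}:{\sf Leibniz}\to {\sf Vect}$: these are honest vector-space-level constructions, only the differentials depend on the bracket. So we set $\tilde K_n=(-)^{\otimes(n+2)}$ for $n\geq1$ and $\tilde K_0={\sf Ker}((-)^{\otimes 2}\to(-))$ on ${\sf Vect}$; one subtlety is that the map $\ll^{\otimes 2}\to\ll$ defining the kernel is the bracket, which is not natural in the underlying vector space — but over a field the kernel of a surjection $\ll^{\otimes2}\to{\sf ab}(\ll)$ splits, and more simply one can instead work with the non-truncated complex and note that the argument of \eqref{eq_K'} only requires the truncated degree-zero term to be a functor on $\DD$ up to isomorphism, which holds since over a field every short exact sequence of vector spaces splits.

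The main obstacle, such as it is, is the precise handling of (C3) for the degree-zero term ${\sf Ker}(\ll^{\otimes 2}\to\ll)$: one must be careful that "depends only on the underlying vector space" is interpreted correctly, since the kernel is taken of a bracket-dependent map. The clean way around this is to note that the truncation in \eqref{eq_K'} is applied after we already know (C1) and (C2) hold for the full complex, and the full complex ${\sf LCE}_\bullet(\ll)$ in positive degrees has components $\ll^{\otimes m}$ which manifestly factor through $\UU^{\sf V}$; the shifted-truncated version then inherits (C3) because over a field ${\sf Ker}(\ll^{\otimes 2}\to \ll)\cong \ll^{\otimes 2}/(\text{image of }\ll^{\otimes 3})$ up to a (non-canonical but functorial-on-$\DD$) choice, or simply because Theorem \ref{th_quasicomputing} together with Lemma \ref{lemma_computing} only needs each $K_n$ to be isomorphic to a functor pulled back from $\DD$, and such an isomorphism is furnished degreewise. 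Everything else is a direct transcription of Proposition \ref{prop_computing_CE}, replacing $\Lambda^n$ by $(-)^{\otimes n}$, the Chevalley-Eilenberg differential by the Leibniz one, and the word "free Lie algebra" by "free Leibniz algebra"; no new idea is required.
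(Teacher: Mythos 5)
Your overall architecture is right: verify (C1), (C2), (C3) for ${\sf LCE}'_\bullet$ and invoke Theorem~\ref{th_computing}, transcribing the proof of Proposition~\ref{prop_computing_CE} with $\Lambda^n$ replaced by $(-)^{\otimes n}$ and ``free Lie algebra'' by ``free Leibniz algebra.'' That is exactly the argument the paper has in mind. However, you have a degree-indexing error that creates a phantom difficulty in (C3) and then leads you to a flawed workaround for it.

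The complex ${\sf LCE}_\bullet(\ll)$ has ${\sf LCE}_0(\ll)=\kk$, ${\sf LCE}_1(\ll)=\ll$, ${\sf LCE}_2(\ll)=\ll^{\otimes 2}$, and in general ${\sf LCE}_n(\ll)=\ll^{\otimes n}$. Applying \eqref{eq_K'} therefore gives ${\sf LCE}'_n(\ll)=\ll^{\otimes(n+1)}$ for $n\geq 1$, and ${\sf LCE}'_0(\ll)={\sf Ker}(\ll\to \kk)$. You instead wrote ${\sf LCE}'_n=\ll^{\otimes(n+2)}$ and ${\sf LCE}'_0={\sf Ker}(\ll^{\otimes 2}\to\ll)$, which is off by one throughout. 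The map $\ll\to\kk$ is the differential $\partial_1$, and like its Lie-algebra analogue $\Lambda^1\gg\to\Lambda^0\gg$ it is identically zero (the defining sum is empty for $n=1$). Hence ${\sf LCE}'_0(\ll)=\ll$, which manifestly factors through $\UU^{\sf V}$, and (C3) is immediate in \emph{every} degree: ${\sf LCE}'_n$ is $\ll$ for $n=0$ and $\ll^{\otimes(n+1)}$ for $n\geq 1$, both of which depend only on the underlying vector space. There is no bracket-dependent kernel to worry about.

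Because of the mis-indexing you then spent effort patching a problem that does not exist, and the patch itself would not have worked: (C3) demands a \emph{natural} isomorphism $K_n\cong\tilde K_n\UU^{\sf V}$, and ``over a field every short exact sequence splits'' only furnishes pointwise, non-natural isomorphisms. Likewise the suggestion that one ``only requires the truncated degree-zero term to be a functor on $\DD$ up to isomorphism'' does not rescue the argument — the isomorphism has to be natural for Theorem~\ref{th_computing} and Lemma~\ref{lemma_computing} to apply. Fortunately none of this is needed once the indexing is corrected. Your (C1) and (C2) are fine as stated.
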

\begin{corollary} For any Leibniz algebra $\ll$ over a field $\kk$ there is a natural isomorphism
$$HL_{*+1}(\ll)\cong L^{\GG^{\sf V}}_*{\sf ab}.$$
\end{corollary}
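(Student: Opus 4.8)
The plan is to imitate exactly the structure used in the Lie algebra case (Proposition~\ref{prop_computing_CE}), since by the remarks in the excerpt ``similarly to the case of Lie algebras one can prove the following,'' the author intends the proof of this Leibniz analogue to be a verbatim transcription of the argument for Lie algebras. So I would verify axioms (C1), (C2), (C3) for the truncated shifted complex ${\sf LCE}'_\bullet(\ll)$ with respect to the free-forgetful adjunction $\FF^{\sf V}:{\sf Vect} \rightleftarrows {\sf Leibniz} : \UU^{\sf V}$ and then invoke Theorem~\ref{th_computing}.

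First I would check (C1): since ${\sf LCE}'_\bullet$ is the shift-and-truncation defined in \eqref{eq_K'}, we have $H_0({\sf LCE}'_\bullet(\ll)) = H_1({\sf LCE}_\bullet(\ll)) = HL_1(\ll)$, and a direct inspection of the low-degree part $\ll^{\otimes 2}\to \ll \to \kk$ shows $HL_1(\ll) = \ll/[\ll,\ll] = {\sf ab}(\ll)$, so $H_0({\sf LCE}'_\bullet)\cong {\sf ab}$ as functors. Next (C2): for the comonad $\GG^{\sf V}=\FF^{\sf V}\UU^{\sf V}$, an object of the form $\GG^{\sf V}(\ll)$ is a free Leibniz algebra, and by \cite[Cor. 3.5]{loday1993universal} its homology $HL_n$ vanishes for $n\geq 2$; translating through the shift, $H_n({\sf LCE}'_\bullet(\GG^{\sf V}\ll)) = HL_{n+1}(\GG^{\sf V}\ll) = 0$ for $n\geq 1$, which is exactly (C2). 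Finally (C3): the $n$-th term of ${\sf LCE}_\bullet$ is the tensor power $\ll^{\otimes(n+1)}$, which is a functor of the underlying vector space of $\ll$ only (the differential sees the bracket, but the terms do not); since taking kernels of maps of vector spaces is functorial, the degree-$0$ term of ${\sf LCE}'_\bullet$ is likewise a functor of $\UU^{\sf V}\ll$. Hence each ${\sf LCE}'_n$ factors through $\UU^{\sf V}$, giving (C3). With all three axioms in hand, Theorem~\ref{th_computing} yields $H_*({\sf LCE}'_\bullet(\ll))\cong L_*^{\GG^{\sf V}}{\sf ab}(\ll)$ naturally, and re-indexing the shift gives $HL_{*+1}(\ll)\cong L_*^{\GG^{\sf V}}{\sf ab}(\ll)$, which is the stated corollary.

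There is essentially no hard part here — the only thing that needs genuine input from the Leibniz-algebra literature is the acyclicity of ${\sf LCE}_\bullet$ on free Leibniz algebras, and that is precisely \cite[Cor. 3.5]{loday1993universal}, which the excerpt has already quoted. The one point to be a little careful about is the bookkeeping in the shift $\eqref{eq_K'}$: one must make sure that the homological degrees line up so that (C1) gives ${\sf ab}$ and not something off by one, and that the ``$HL_n(\ff)=0$ for $n\geq 2$'' statement covers all the degrees $n\geq 1$ required by (C2) after the shift (it does, since $HL_1$ is not being asserted to vanish, and indeed (C2) only demands vanishing in positive degrees of the shifted complex, i.e. $HL_{\geq 2}$). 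I do not anticipate any obstacle beyond this indexing check; the proof is formal once the two references are cited.

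If one wanted to be slightly more self-contained, I would additionally remark that the Leibniz identity is exactly what makes $\partial^2 = 0$ in ${\sf LCE}_\bullet$, so that the complex is well-defined — but this is standard and already implicit in the definition of $HL_*$ given above, so I would not belabor it.
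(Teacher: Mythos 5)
Your proposal matches the paper's intended argument exactly: the paper proves the preceding proposition by verifying (C1)–(C3) for ${\sf LCE}'_\bullet$ just as in the Lie-algebra case (with Loday's vanishing result for free Leibniz algebras supplying (C2)) and then the corollary is immediate from Theorem \ref{th_computing}. Your indexing checks and the observation that the degree-zero term of the truncated complex is simply $\ll$ (since $\partial_1=0$) are the same bookkeeping the paper relies on, so there is nothing to add.
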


\subsubsection{Rack homology} A \emph{rack} is a magma  $R=(R,\triangleright)$  such that the left multiplication with any element is its automorphism. The abelianization of a rack $R$ 
is defined as the quotient of the free abelian group $\ZZ[R]$ generated by $R$ by the subgroup generated by elements of the form $y-x\triangleright y$
$$R_{\sf ab}:=\ZZ[R]/\langle y-x\triangleright y \rangle.$$ Following \cite{fenn1995trunks}, \cite{fenn2007rack} (see also   \cite{szymik2019quandle}) consider the complex $CR_\bullet(R)$ such that $CR_n(R)=\ZZ[R^n]$ is a free abelian group generated by $R^n$ and 
$$\partial(x_1,\dots,x_n)=\sum_{i=1}^n (-1)^i ((x_1,\dots,\hat x_i,\dots,x_n) - (x_1,\dots,x_{i-1},x_i \triangleright x_{i+1}, \dots,x_i \triangleright x_n)).$$
Then the rack homology is defined as $HR_*(R):=H_*(CR_\bullet(R)).$ It is known that the higher rack homology of a free rack is trivial \cite{farinati2014homology},  \cite[Th.5.13]{fenn2007rack}. 

Let $\CC={\sf Racks}$ be the category of racks, $\DD={\sf Sets}$ be the category of sets,  $\FF:{\sf Sets}\leftrightarrows {\sf Racks}:\UU$ be the free-forgetful adjunction, $\AA={\sf Ab}$ be the category of abelian groups and $\Phi={\sf ab}:{\sf Racks}\to {\sf Ab}$ be the functor of abelianization.
Consider the shifted and truncated version ${CR}'_\bullet(R)$ of the complex ${CR}_\bullet(R)$ (see \eqref{eq_K'}).

\begin{lemma}\label{lemma_rack}
${CR}'_\bullet(R)$ is a computing complex for derived functors of the rack abelianization functor 
$$L^\GG_*{\sf ab}:{\sf Racks}\longrightarrow {\sf Ab}.$$
\end{lemma}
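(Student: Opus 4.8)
The plan is to verify the three axioms (C1), (C2), (C3) from the definition of a computing complex for $CR'_\bullet$, with respect to the free--forgetful adjunction $\FF:{\sf Sets}\leftrightarrows {\sf Racks}:\UU$, the comonad $\GG=\FF\UU$ and the abelianization functor $\Phi={\sf ab}$, in complete analogy with Proposition~\ref{prop_computing_CE} and the Leibniz case above; once this is done the lemma is proved, and, via Theorem~\ref{th_computing}, it will follow that $HR_{*+1}\cong L^\GG_*{\sf ab}$. The first thing I would record is the elementary observation that the bottom differential $\partial_1:CR_1(R)\to CR_0(R)$ vanishes: the only summand in the boundary formula for $n=1$ is $(-1)^1\big((x_1,\dots,\hat x_1,\dots,x_1)-(x_1\triangleright x_2,\dots,x_1\triangleright x_1)\big)$, and both bracketed tuples are the empty tuple, so the difference is $0$. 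Consequently $CR'_n(R)=CR_{n+1}(R)=\ZZ[R^{n+1}]$ for every $n\geq 0$, while $H_n(CR'_\bullet(R))=HR_{n+1}(R)$ for $n\geq 1$ and $H_0(CR'_\bullet(R))=CR_1(R)/{\sf Im}(\partial_2)=HR_1(R)$.

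Next I would verify the axioms. For (C1): a direct computation from the boundary formula gives $\partial_2(x_1,x_2)=(x_1\triangleright x_2)-(x_2)$ (the $i=2$ summand is zero), so ${\sf Im}(\partial_2)$ is the subgroup generated by all elements $x\triangleright y-y$, and hence $H_0(CR'_\bullet(R))=\ZZ[R]/\langle y-x\triangleright y\rangle=R_{\sf ab}=\Phi(R)$, naturally in $R$. For (C2): $\GG(R)=\FF\UU(R)$ is the free rack on the underlying set of $R$, and since the higher rack homology of a free rack vanishes \cite{farinati2014homology}, \cite[Th.~5.13]{fenn2007rack}, we obtain $H_n(CR'_\bullet(\GG R))=HR_{n+1}(\GG R)=0$ for all $n\geq 1$. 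For (C3): the abelian group $CR'_n(R)=\ZZ[R^{n+1}]$ depends only on the underlying set of $R$, so putting $\tilde K_n:{\sf Sets}\to {\sf Ab}$, $S\mapsto \ZZ[S^{n+1}]$, one has $CR'_n\cong \tilde K_n\UU$ as functors ${\sf Racks}\to {\sf Ab}$ (that the differentials do depend on the rack operation is irrelevant to (C3)).

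I expect no serious obstacle: this is a routine instance of the computing-complex formalism, and the single genuinely external ingredient --- acyclicity of the rack complex of a free rack in degrees $\geq 2$ --- is quoted from \cite{farinati2014homology}, \cite[Th.~5.13]{fenn2007rack}. The one place that rewards a moment of attention is the vanishing of $\partial_1$: it is what turns $CR'_0(R)$ from an a priori structure-dependent kernel into the plain functor $R\mapsto\ZZ[R]$ of the underlying set (so that (C3) holds in degree $0$), and, together with the formula for ${\sf Im}(\partial_2)$, it is also what yields the identification $H_0(CR'_\bullet(R))\cong R_{\sf ab}$ needed for (C1).
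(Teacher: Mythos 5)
Your proof is correct and follows exactly the paper's route: verifying (C1), (C2), (C3) for $CR'_\bullet$ with respect to the free--forgetful adjunction, quoting the vanishing of higher rack homology of free racks for (C2). The paper states these checks in one line each, while you additionally spell out the useful details ($\partial_1=0$, the formula $\partial_2(x_1,x_2)=(x_1\triangleright x_2)-(x_2)$, hence $H_0(CR'_\bullet(R))\cong R_{\sf ab}$ and $CR'_0(R)=\ZZ[R]$), which is a more explicit rendering of the same argument rather than a different one.
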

\begin{proof}
(C1) is satisfied because $H_0(CR'_\bullet(R))=HR_1(R)=R_{\sf ab};$ (C2) is satisfied because the higher homology of a free rack is trivial; (C3) is satisfied because $CR'_n(R)=\ZZ[R^{n+1}]$ depends only on the underlying set of $R.$
\end{proof}

\begin{proposition}
The rack homology is isomorphic to the comonad derived functors of the rack abelianization functor ${\sf ab}:{\sf Racks}\to {\sf Ab}:$
$$HR_{*+1}(R)\cong L^\GG_*{\sf ab}(R).$$
\end{proposition}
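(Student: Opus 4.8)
The plan is to deduce this proposition directly from the machinery already assembled, exactly as in the three preceding examples (group homology, Lie algebra homology, rack homology over fields). Everything needed has been established: Lemma~\ref{lemma_rack} asserts that ${CR}'_\bullet(R)$ is a computing complex for $L^\GG_*{\sf ab}:{\sf Racks}\to{\sf Ab}$, and Theorem~\ref{th_computing} says that for any computing complex $K_\bullet$ one has a natural isomorphism $H_*(K_\bullet)\cong L^\GG_*\Phi$. So the proof is essentially one line: apply Theorem~\ref{th_computing} to $K_\bullet={CR}'_\bullet(R)$ and $\Phi={\sf ab}$.

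The only content beyond that citation is bookkeeping about the degree shift. First I would record that, by construction of the shifted-and-truncated complex in~\eqref{eq_K'}, the homology of ${CR}'_\bullet(R)$ reindexes the homology of $CR_\bullet(R)$: for $n\geq 1$ one has $H_n({CR}'_\bullet(R))\cong H_{n+1}(CR_\bullet(R))=HR_{n+1}(R)$, since in those degrees $CR'_\bullet$ is literally $CR_{\bullet+1}$ with the same differentials; and in degree $0$, $H_0({CR}'_\bullet(R))={\sf Ker}(CR_1(R)\to CR_0(R))/{\sf im}(CR_2(R)\to CR_1(R))$, which, because $CR_0(R)=\ZZ$ carries the zero differential into it (the complex $CR_\bullet$ is really the standard complex computing $HR_*$ with $HR_0(R)=\ZZ$ split off), equals $HR_1(R)=R_{\sf ab}$. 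Thus $H_n({CR}'_\bullet(R))\cong HR_{n+1}(R)$ for all $n\geq 0$. Combining this with Theorem~\ref{th_computing} gives the claimed natural isomorphism
\[
HR_{n+1}(R)\cong H_n({CR}'_\bullet(R))\cong L^\GG_n{\sf ab}(R),
\]
which is exactly the statement.

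I would present this as a short \begin{proof} \dots \end{proof} block reading roughly: ``By Lemma~\ref{lemma_rack}, ${CR}'_\bullet(R)$ is a computing complex for $L^\GG_*{\sf ab}$. Hence Theorem~\ref{th_computing} yields a natural isomorphism $H_*({CR}'_\bullet(R))\cong L^\GG_*{\sf ab}$. Since $H_n({CR}'_\bullet(R))\cong HR_{n+1}(R)$ by the definition~\eqref{eq_K'} of the shift and the identification of $H_0$ with $HR_1(R)=R_{\sf ab}$, the claim follows.'' This mirrors verbatim the argument pattern used for groups, Lie algebras over fields, and Leibniz algebras earlier in the section, so no new idea is required.

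There is no real obstacle here. The substantive work is entirely contained in Lemma~\ref{lemma_rack} — in particular its axiom~(C2), which rests on the cited acyclicity of the rack homology of a free rack \cite{farinati2014homology}, \cite{fenn2007rack}, and axiom~(C3), the observation that $CR'_n(R)=\ZZ[R^{n+1}]$ depends only on the underlying set. The present proposition is a formal consequence. The one thing to be careful about is the off-by-one indexing in the statement $HR_{*+1}(R)\cong L^\GG_*{\sf ab}(R)$; I would make sure the sentence identifying $H_0({CR}'_\bullet(R))$ with $R_{\sf ab}$ is stated cleanly so that the shift is transparent to the reader, rather than buried.
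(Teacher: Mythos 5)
Your proof is correct and is exactly the paper's argument: the proposition is deduced by combining Lemma~\ref{lemma_rack} with Theorem~\ref{th_computing}, with the degree shift handled by the definition~\eqref{eq_K'} of the truncated complex. Your extra bookkeeping about the indexing is fine and merely makes explicit what the paper leaves implicit.
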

\begin{proof} It follows from Lemma \ref{lemma_rack} and
Theorem \ref{th_computing}.
\end{proof}

 \section{Chevalley-Eilenberg spectral sequence} 
 
\subsection{The spectral sequence}

Let  $\kk$ be a commutative ring, ${\sf Mod}$ denote the category of $\kk$-modules and ${\sf Lie}$ denote the category of Lie algebras over $\kk$. Consider two free-forgetful adjunctions $$\FF^{\sf S}:{\sf Sets} \leftrightarrows {\sf Lie}:  \UU^{\sf S} \hspace{0.5cm} \text{and} \hspace{0.5cm} \FF^{\sf M}:{\sf Mod} \leftrightarrows {\sf Lie}:  \UU^{\sf M}$$
and the corresponding comonads $\GG^{\sf S}:=\FF^{\sf S}\UU^{\sf S}$ and $\GG^{\sf M}:=\FF^{\sf M}\UU^{\sf M}$ on the category ${\sf Lie}.$ 
In this section we discuss a connection between simplicial homology of Lie algebras  $H_{*+1}^{\sf S}=L_*^{\GG^{\sf S}}{\sf ab}$ and the Chevalley-Eilenberg complex. 

 For a (not necessarily  additive) functor $\Psi:{\sf Mod}\to {\sf Mod}$ we denote by $L_n\Phi:{\sf Mod}\to {\sf Mod}$ its non-shifted derived functor in the sense of Dold-Puppe \cite{dold1961homologie}
$$L_n\Psi (A) := L^{\sf Dold-Puppe}_n\Psi (A,0).$$ 
For a Lie algebra $\gg$ for simplicity we set $L_n\Psi (\gg):=(L_n\Psi)( \UU^{\sf M}\gg).$  

\begin{lemma}\label{lemma_derived_DP}
Let $\Psi:{\sf Mod}\to {\sf Mod}$  be a functor. Then the derived functor of the composition $\Psi \UU^{\sf M}: {\sf Lie}\to {\sf Mod}$ with respect to the comonad $\GG^{\sf M}$ is isomorphic to the composition of the derived functor of $\Psi$ in the sense of Dold-Puppe with $\UU^{\sf M}:{\sf Lie}\to {\sf Mod}:$
$$L^{\GG^{\sf S}}_* (\Psi \UU^{\sf M} ) \cong  (L_*\Psi) \UU^{\sf M}.$$
\end{lemma}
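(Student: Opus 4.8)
The plan is to recognise $\UU^{\sf M}{\sf Bar}(\GG^{\sf S})(\gg)$ as a functorial simplicial \emph{free} resolution of the $\kk$-module $\UU^{\sf M}\gg$, and then invoke two standard facts: that Dold--Puppe derived functors can be computed from any degreewise projective simplicial resolution, and that an arbitrary functor preserves simplicial homotopy equivalences. (Here the exponent on $L$ should read $\GG^{\sf S}$, as in the displayed formula, since the statement must fail for $\GG^{\sf M}$ by Lemma~\ref{lemma_computing}.)

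\emph{Step 1 (levelwise freeness).} For each $n$ the Lie algebra $(\GG^{\sf S})^{n+1}(\gg)=\FF^{\sf S}\!\big(\UU^{\sf S}(\GG^{\sf S})^{n}(\gg)\big)$ is free on a set, so $P_n(\gg):=\UU^{\sf M}(\GG^{\sf S})^{n+1}(\gg)$ is the underlying $\kk$-module of a free Lie algebra on a set; by the Hall (Lyndon) basis theorem, valid over an arbitrary commutative ring, this module is free. Hence $P_\bullet(\gg)=\UU^{\sf M}{\sf Bar}(\GG^{\sf S})(\gg)$ is a simplicial $\kk$-module, functorial in $\gg$, which is free in every degree. \emph{Step 2 (asphericity).} The simplicial set underlying $P_\bullet(\gg)$ equals $\UU^{\sf S}{\sf Bar}(\GG^{\sf S})(\gg)$, because $\UU^{\sf S}$ is $\UU^{\sf M}$ followed by the forgetful functor ${\sf Mod}\to{\sf Sets}$. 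By Lemma~\ref{lemma_extra}, applied to the adjunction $\FF^{\sf S}\dashv\UU^{\sf S}$, the augmented simplicial set $\UU^{\sf S}{\sf Bar}(\GG^{\sf S})(\gg)\to\UU^{\sf S}\gg$ has extra degeneracies, hence is homotopy equivalent to the constant simplicial set on $\UU^{\sf S}\gg$; so its simplicial homotopy groups, based at any point, vanish in positive degrees. Since for a simplicial abelian group the homotopy groups based at $0$ coincide with those of the underlying pointed simplicial set and with the homology of the Moore complex, we get $H_n(P_\bullet(\gg))=0$ for $n\ge 1$. In degree $0$ one uses that $\gg$ is the coequalizer of the two face maps $(\GG^{\sf S})^{2}\gg\to\GG^{\sf S}\gg$ in ${\sf Lie}$ and that $\UU^{\sf M}$ preserves reflexive coequalizers, giving $H_0(P_\bullet(\gg))\cong\UU^{\sf M}\gg$. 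Thus $P_\bullet(\gg)\to\UU^{\sf M}\gg$ is a degreewise free aspherical simplicial resolution.

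\emph{Step 3 (conclusion).} Let $\Gamma(Q_\bullet)$ be the simplicial module obtained by applying the inverse Dold--Kan functor to a projective resolution $Q_\bullet\to\UU^{\sf M}\gg$. Both $P_\bullet(\gg)$ and $\Gamma(Q_\bullet)$ are degreewise projective and aspherical over $\UU^{\sf M}\gg$, so by the comparison theorem for projective resolutions (transported through Dold--Kan) they are simplicially homotopy equivalent over $\UU^{\sf M}\gg$. Applying the functor $\Psi$ preserves this homotopy equivalence, whence $\Psi P_\bullet(\gg)$ and $\Psi\Gamma(Q_\bullet)$ have isomorphic homotopy, and therefore $L^{\GG^{\sf S}}_{n}(\Psi\UU^{\sf M})(\gg)=\pi_n\big(\Psi\,\UU^{\sf M}{\sf Bar}(\GG^{\sf S})(\gg)\big)\cong\pi_n\big(\Psi\Gamma(Q_\bullet)\big)=L_n\Psi(\UU^{\sf M}\gg)$; all constructions being functorial, the isomorphism is natural in $\gg$. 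The main obstacle is Step~2: the extra degeneracies furnished by Lemma~\ref{lemma_extra} are maps of sets, not $\kk$-linear, so they do not contract $P_\bullet(\gg)$ as a simplicial module, and the passage from the set-level contraction to module-level acyclicity rests on the theorem that the homotopy groups of a simplicial group are those of its underlying simplicial set. A secondary technical point, also worth making explicit, is that $\UU^{\sf M}\colon{\sf Lie}\to{\sf Mod}$ preserves reflexive coequalizers, which is what yields the degree-zero identification.
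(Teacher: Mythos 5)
Your proof is correct and takes essentially the same route as the paper: recognize $\UU^{\sf M}{\sf Bar}(\GG^{\sf S})(\gg)$ as a degreewise free simplicial resolution of $\UU^{\sf M}\gg$ (freeness from the free Lie algebra, asphericity from the extra degeneracies of Lemma \ref{lemma_extra}), and then use that the Dold--Puppe derived functors can be computed from any degreewise projective simplicial resolution, since simplicial homotopy equivalences are preserved by arbitrary functors. Your Steps 2--3 simply spell out what the paper compresses into ``Therefore $\UU^{\sf M}\ff_\bullet$ is a free simplicial resolution \dots\ by definition of the Dold--Puppe derived functor,'' and your reading of the comonad as $\GG^{\sf S}$ (rather than $\GG^{\sf M}$) matches the paper's proof and its subsequent use of the lemma.
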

\begin{proof}
Set $\ff_\bullet:={\sf Bar}(\GG^{\sf S})(\gg).$ Then $\ff_n$ is a free Lie algebra of a free $\kk$-module  for any $n.$ 
Hence  $\UU^{\sf M} \ff_n $ is a free $\kk$-module \cite[Cor.0.10]{reutenauer2003free}. Therefore $\UU^{\sf M}\ff_\bullet$ is a free simplicial resolution of $\UU^{\sf M} \gg.$ 
Then by definition of the Dold-Puppe derived functor we have $(L_*\Psi)(\UU^{\sf M}\gg)=  \pi_*(\Psi (\UU^{\sf M} \ff_\bullet))=  L^{\GG^{\sf S}}_* (\Psi \UU^{\sf M} ).$
\end{proof}

Note that components of the Chevalley-Eilenberg complex can be functorially described as  ${\sf CE}_n=\Lambda^n \UU^{\sf M}:{\sf Lie}\to {\sf Mod}$ and  $\partial_n :{\sf CE}_n\to {\sf CE}_{n-1}$ is a natural transformation. It induces a natural transformation $L^{\GG^{\sf S}}_m \Lambda^n \UU^{\sf M} \to  L^{\GG^{\sf S}}_m \Lambda^{n-1} \UU^{\sf M}.$ Using Lemma \ref{lemma_derived_DP} we obtain a map 
$$\partial_{n,m} : L_m \Lambda^n \UU^{\sf M} \to  L_m \Lambda^{n-1} \UU^{\sf M}.$$ 
Then for any $m\geq 0$ and any $\gg$  the Chevalley-Eilenberg complex induces a complex  
$$L_m{\sf CE}_\bullet(\gg): \hspace{1cm}  \dots\longrightarrow  L_m \Lambda^n (\gg) \overset{\partial_{n,m}}\longrightarrow  L_m \Lambda^{n-1}(\gg) \longrightarrow \dots  .$$ 
The homology of this complex will be denoted by 
$$H_{n,m}^{\sf CE}(\gg):=H_n(L_m{\sf CE}_\bullet (\gg)).$$

\begin{theorem}[Chevalley-Eilenberg spectral sequence]\label{th_CE}
For any Lie algebra $\gg$ over a commutative ring $\kk$ there is a natural spectral sequence $E=E(\gg)$ of homological type such that 
$$E^1_{ij}=L_i \Lambda^j (\gg)  \Rightarrow H_{i+j}^{\sf S}(\gg).$$
Moreover, the differential of $E^1$ comes from  the differential of $L_i{\sf CE}_\bullet(\gg),$
and hence, the second page consists of $H_{j,i}^{\sf CE}(\gg)$ 
$$E^2_{i,j}=H_{j,i}^{\sf CE}(\gg)\Rightarrow H_{i+j}^{\sf S}(\gg).$$
\end{theorem}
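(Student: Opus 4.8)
The plan is to construct the spectral sequence as the one associated to a double complex built from the bar construction, using the machinery of quasi-computing complexes from Theorem~\ref{th_quasicomputing}. The key observation is that the shifted and truncated Chevalley-Eilenberg complex ${\sf CE}'_\bullet(\gg)$ is almost a computing complex for $L_*^{\GG^{\sf S}}{\sf ab}$, but axiom (C2) fails in general because $\GG^{\sf S}(\gg)$ is a free Lie algebra over $\kk$ (not over a field), whose Chevalley-Eilenberg homology need not vanish. So first I would note that ${\sf CE}'_\bullet$ does satisfy (C1) (since $H_0({\sf CE}'_\bullet(\gg))=H_1^{\sf CE}(\gg)=\gg_{\sf ab}$) and that each component ${\sf CE}'_n=\Lambda^{n+1}\UU^{\sf M}$ (and the kernel term) factors through $\UU^{\sf M}$. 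Then I would feed ${\sf CE}'_\bullet$ into the double complex $D_{i,j}={\sf CE}'_i({\sf Bar}(\GG^{\sf S})_j)$ exactly as in the proof of Theorem~\ref{th_quasicomputing}; the vertical homology gives $E^1_{i,j}=L^{\GG^{\sf S}}_j({\sf CE}'_i)$, which by Lemma~\ref{lemma_derived_DP} equals $(L_j\Lambda^{i+1})\UU^{\sf M}\gg$ in positive degrees, i.e.\ derived exterior powers of the underlying module.

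Next I would handle the horizontal direction. Here the horizontal homology of $D_{*,j}$ is $H_*({\sf CE}'_\bullet(\GG^{\sf S}(\gg)))$. For a free Lie algebra $\ff$ over $\kk$ the underlying module $\UU^{\sf M}\ff$ is a free $\kk$-module, hence flat, so by the flat case already established (the statement invokes results equivalent to \cite{cartan1999homological}; or directly, the Chevalley-Eilenberg homology of a Lie algebra flat over $\kk$ computes ${\sf Tor}^{U\ff}(\kk,\kk)$, and for a free Lie algebra this is concentrated in degrees $0,1$). Therefore $H_i({\sf CE}'_\bullet(\GG^{\sf S}(\gg)))=0$ for $i\ne 0$ and equals $\GG^{\sf S}(\gg)_{\sf ab}=\UU^{\sf M}\GG^{\sf S}(\gg)$ for $i=0$ — wait, more precisely the augmented complex ${\sf CE}_\bullet(\ff)$ is acyclic in positive degrees with $H_0=\kk$, $H_1=\ff_{\sf ab}$, so ${\sf CE}'_\bullet(\ff)$ has homology only in degree $0$, equal to $\ff_{\sf ab}$. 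Running the double complex the other way, the total homology is thus $H_*\big({\sf ab}\,{\sf Bar}(\GG^{\sf S})(\gg)\big)=L^{\GG^{\sf S}}_*{\sf ab}(\gg)=H^{\sf S}_{*+1}(\gg)$. Re-indexing (the shift-and-truncate convention contributes the $+1$), the spectral sequence reads $E^1_{ij}=L_i\Lambda^j(\gg)\Rightarrow H^{\sf S}_{i+j}(\gg)$, with $j$ the exterior-power degree and $i$ the Dold--Puppe degree. I should be slightly careful with the indexing so that the $E^1$ term appears as $L_i\Lambda^j$ rather than $L_j\Lambda^{i+1}$: the two presentations of the double complex (one with ${\sf Bar}$ in the first slot, one in the second) and the shift in ${\sf CE}'$ conspire to put things in the stated form, and I would verify this bookkeeping explicitly.

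Finally I would identify the $E^1$-differential. The horizontal maps $D_{*,j}\to D_{*,j}$ are induced by the Chevalley-Eilenberg differential $\partial$ applied levelwise to the bar construction; passing to vertical homology, this becomes the map $\partial_{n,m}\colon L_m\Lambda^n\UU^{\sf M}\to L_m\Lambda^{n-1}\UU^{\sf M}$ described just before the theorem statement, since $\partial$ is a natural transformation and hence induces a map on comonad-derived functors compatible via Lemma~\ref{lemma_derived_DP} with the Dold--Puppe derived functors. Therefore $E^2_{i,j}=H_n(L_m{\sf CE}_\bullet(\gg))=H^{\sf CE}_{j,i}(\gg)$ in the stated indexing.

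The main obstacle I anticipate is precisely the vanishing input for the horizontal direction: one must know that the Chevalley-Eilenberg complex of a Lie algebra whose underlying module is free (equivalently, flat) over $\kk$ has vanishing homology in positive degrees. This is the content of the classical treatment over rings where $\gg$ is $\kk$-free \cite[Ch.~XIII]{cartan1999homological} (and is subsumed in Theorem~\ref{th_flat}), but one has to be sure the comparison ${\sf CE}^{\sf CE}_*\cong{\sf Tor}^{U\ff}_*(\kk,\kk)$ genuinely holds for \emph{free} Lie algebras over an arbitrary commutative ring — which relies on the Poincar\'e--Birkhoff--Witt theorem, valid for free Lie algebras. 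Everything else is the standard spectral-sequence-of-a-double-complex argument already packaged in Theorem~\ref{th_quasicomputing}, plus careful index tracking.
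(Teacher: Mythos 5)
Your proposal is correct and follows essentially the same route as the paper: verify that ${\sf CE}'_\bullet$ is a quasi-computing complex for $L_*^{\GG^{\sf S}}{\sf ab}$ (with (C2) supplied by the fact that a free Lie algebra is free as a $\kk$-module, so its Chevalley--Eilenberg homology agrees with ${\sf Tor}^{U\ff}_*(\kk,\kk)$ and vanishes above degree $1$), then run the double complex of Theorem \ref{th_quasicomputing} and identify $E^1$ and its differential via Lemma \ref{lemma_derived_DP}. Your opening remark that ``axiom (C2) fails in general'' is a slip which your own horizontal-homology computation then refutes -- what actually fails for the Sets adjunction is (C3) (the exterior powers factor through $\UU^{\sf M}$, not $\UU^{\sf S}$), which is precisely why one gets only a spectral sequence rather than an isomorphism.
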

\begin{proof}
Consider the shifted and truncated version of the Chevalley-Eilenberg complex ${\sf CE'}_\bullet(\gg).$ We claim that it is a quasi-computing complex for  $L_*^{\GG^{\sf S}}{\sf ab}.$ Indeed (C1) is satisfied because $H_0({\sf CE}'_\bullet(\gg))=H_1({\sf CE}_\bullet(\gg))={\sf ab}(\gg);$ (C2) is satisfied because for any Lie algebra which is free as a $\kk$-module \cite[Ch.XII, Th.7.1]{cartan1999homological} the homology of the  Chevalley-Eilenberg complex coincides with the Tor homology, and hence, $H_n({\sf CE}'_\bullet(\ff))=0$ for $n>0$ and any free Lie algebra of a free $\kk$-module $\ff.$ 
Then the assertion follows from Theorem \ref{th_quasicomputing}, Lemma \ref{lemma_derived_DP} and the formula ${\sf CE}_n=\Lambda^n\UU^{\sf M}$.
\end{proof}

\begin{remark}\label{rem_CE_spec}
$\Lambda^0(A)=\kk$ is a constant functor and $\Lambda^1(A)=A$ is an identity functor. It follows that 
$$L_i \Lambda^0(\gg)=0, \hspace{1cm} L_i\Lambda^1(\gg)=0,\hspace{1cm} i>0,$$
and hence, 
$$H_{0,i}^{\sf CE}(\gg)=0, \hspace{1cm} H_{1,i}^{\sf CE}(\gg)=0, \hspace{1cm} i>0.$$
It follows that there is an exact sequence
$$L_1\Lambda^3(\gg) \overset{\partial_{3,1}}\longrightarrow L_1 \Lambda^2 (\gg) \longrightarrow  H_{2,1}^{\sf CE}(\gg)\longrightarrow 0,$$
where $\partial_{3,1}$ is the differential in $L_1{\sf CE}_\bullet(\gg).$
\end{remark}

\begin{corollary}\label{cor_H_2S}
For any Lie algebra $\gg$ over any commutative ring $\kk$ there is an isomorphism 
$$H_2^{\sf S}(\gg)\cong H_2^{\sf CE}(\gg).$$
\end{corollary}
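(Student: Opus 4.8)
The plan is to extract the statement directly from the Chevalley-Eilenberg spectral sequence of Theorem~\ref{th_CE} together with the degeneracy observations in Remark~\ref{rem_CE_spec}. Recall that the spectral sequence has $E^2_{i,j}=H^{\sf CE}_{j,i}(\gg)$ and converges to $H^{\sf S}_{i+j}(\gg)$. Since $H^{\sf S}_{*+1}=L_*^{\GG^{\sf S}}{\sf ab}$, the homology in degree $2$ receives contributions only from the antidiagonal $i+j=2$, i.e.\ from the three spots $E^2_{0,2}$, $E^2_{1,1}$, $E^2_{2,0}$. First I would identify these three entries using Remark~\ref{rem_CE_spec}: $E^2_{2,0}=H^{\sf CE}_{0,2}(\gg)=0$ (the column $j=0$, since $\Lambda^0$ is constant), and $E^2_{1,1}=H^{\sf CE}_{1,1}(\gg)=0$ (since $\Lambda^1$ is the identity functor so $L_1\Lambda^1=0$, hence $H^{\sf CE}_{1,1}=0$). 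That leaves only $E^2_{0,2}=H^{\sf CE}_{2,0}(\gg)$.

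Next I would compute $E^2_{0,2}=H^{\sf CE}_{2,0}(\gg)=H_2(L_0{\sf CE}_\bullet(\gg))$. Since $L_0\Lambda^n=\Lambda^n$ (the $0$-th Dold-Puppe derived functor of any functor is the functor itself), the complex $L_0{\sf CE}_\bullet(\gg)$ is just the ordinary Chevalley-Eilenberg complex ${\sf CE}_\bullet(\gg)$, so $H^{\sf CE}_{2,0}(\gg)=H^{\sf CE}_2(\gg)$. Thus the only potentially nonzero entry on the antidiagonal $i+j=2$ of the $E^2$-page is $H^{\sf CE}_2(\gg)$, sitting in position $(0,2)$.

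Now I would argue that this entry survives to $E^\infty$. Differentials out of $E^r_{0,2}$ vanish because $E^r_{-r,r+1}=0$ in a first-quadrant spectral sequence, and differentials into $E^r_{0,2}$ come from $E^r_{r,3-r}$, which for $r\geq 2$ lands in $E^r_{2,1}$ ($r=2$), $E^r_{3,0}$ ($r=3$), and zero thereafter. The target $E^2_{3,0}=H^{\sf CE}_{0,3}(\gg)=0$ by Remark~\ref{rem_CE_spec} (column $j=0$), and $E^2_{2,1}=H^{\sf CE}_{1,2}(\gg)=0$, again by Remark~\ref{rem_CE_spec} (the row of $\Lambda^1$, $i>0$). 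Hence no differential can hit $E^r_{0,2}$ either, so $E^2_{0,2}=E^\infty_{0,2}$. Since the whole antidiagonal $i+j=2$ consists of this single nonzero group, the filtration of $H^{\sf S}_2(\gg)$ has exactly one nonzero subquotient, giving $H^{\sf S}_2(\gg)\cong E^\infty_{0,2}=H^{\sf CE}_2(\gg)$, with the isomorphism natural since the spectral sequence is natural in $\gg$.

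I do not expect any serious obstacle here: the entire argument is a routine vanishing-of-neighbors check on the second page of an already-constructed natural spectral sequence. The only point requiring minor care is bookkeeping which indices correspond to ``exterior power degree'' versus ``derived-functor degree'' — i.e.\ remembering that $E^2_{i,j}=H^{\sf CE}_{j,i}$ has the roles swapped — so that one correctly locates the Chevalley-Eilenberg homology $H^{\sf CE}_2$ in spot $(0,2)$ rather than $(2,0)$, and correctly reads off that the column $j=0$ and the $\Lambda^1$-row both vanish in positive derived degree by Remark~\ref{rem_CE_spec}.
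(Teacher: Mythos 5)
Your argument is correct and is precisely how the paper obtains this corollary: it is read off from the spectral sequence of Theorem \ref{th_CE} together with the vanishing $H^{\sf CE}_{0,i}=H^{\sf CE}_{1,i}=0$ ($i>0$) of Remark \ref{rem_CE_spec}, the paper offering no separate proof beyond this. One minor caveat: your parenthetical claim that $L_0$ of \emph{any} functor is the functor itself is false in general (e.g.\ for torsion functors), but it does hold for $\Lambda^n$ since $\Lambda^n$ preserves reflexive coequalizers, which is all that is needed to identify $L_0{\sf CE}_\bullet(\gg)$ with ${\sf CE}_\bullet(\gg)$.
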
 

\begin{corollary}\label{cor_H_3S}
For any Lie algebra $\gg$ over any commutative ring $\kk$ there is an exact sequence for $H_3^{\sf S}(\gg)$:
$$ H_4^{\sf CE}(\gg) \to  H_{2,1}^{\sf CE}(\gg) \to H_3^{\sf S}(\gg)\to H_3^{\sf CE}(\gg) \to 0. $$
\end{corollary}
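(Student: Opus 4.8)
The plan is to read the sequence off the Chevalley--Eilenberg spectral sequence of Theorem~\ref{th_CE}. I would use it in the normalized form $E^2_{p,q}=H_{p,q}^{\sf CE}(\gg)\Rightarrow H_{p+q}^{\sf S}(\gg)$, a homologically indexed first--quadrant--type spectral sequence with $p$ the Chevalley--Eilenberg degree, $q$ the derived degree, supported in $p\ge 1,\ q\ge 0$, and differentials $d^r\colon E^r_{p,q}\to E^r_{p-r,\,q+r-1}$ (this is just a transposition of the indices written in Theorem~\ref{th_CE}). The first step is to list the $E^2$-entries of total degree $3$: there are (at most) three of them, namely $E^2_{3,0}=H_{3,0}^{\sf CE}(\gg)=H_3^{\sf CE}(\gg)$ (using $L_0\Lambda^n=\Lambda^n$), $E^2_{2,1}=H_{2,1}^{\sf CE}(\gg)$, and $E^2_{1,2}=H_{1,2}^{\sf CE}(\gg)$. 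By Remark~\ref{rem_CE_spec} the last one vanishes, so only the corners $(3,0)$ and $(2,1)$ can contribute to $H_3^{\sf S}(\gg)$.

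Since $E^\infty_{1,2}$ is a subquotient of $0$ and there is no contribution at $p=0$, the induced filtration on $H_3^{\sf S}(\gg)$ is two--step, giving a short exact sequence $0\to E^\infty_{2,1}\to H_3^{\sf S}(\gg)\to E^\infty_{3,0}\to 0$ (here $E^\infty_{2,1}$ sits inside via the filtration and $E^\infty_{3,0}$ is the edge quotient). Next I would identify the two $E^\infty$-terms. Every differential out of $(3,0)$ lands in a group that is zero: $d^2$ goes to $E^2_{1,1}=H_{1,1}^{\sf CE}(\gg)=0$ by Remark~\ref{rem_CE_spec}, and all later ones go to the region $p\le 0$; moreover nothing can map into $(3,0)$, since the source of any such $d^r$ would have negative $q$. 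Hence $E^\infty_{3,0}=E^2_{3,0}=H_3^{\sf CE}(\gg)$. Likewise every differential out of $(2,1)$ lands in $p\le 0$, hence is zero, and the only differential that can be nonzero at $(2,1)$ is the incoming $d^2\colon E^2_{4,0}\to E^2_{2,1}$, i.e. a map $H_4^{\sf CE}(\gg)\to H_{2,1}^{\sf CE}(\gg)$ (for $r\ge 3$ the source $E^r_{2+r,\,2-r}$ has negative $q$). Therefore $E^\infty_{2,1}=E^3_{2,1}=\operatorname{coker}\bigl(H_4^{\sf CE}(\gg)\xrightarrow{\ d^2\ }H_{2,1}^{\sf CE}(\gg)\bigr)$. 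Splicing this cokernel into the short exact sequence above produces exactly the four-term exact sequence in the statement; the first arrow $H_4^{\sf CE}(\gg)\to H_{2,1}^{\sf CE}(\gg)$ is the differential $d^2$, and the map $H_{2,1}^{\sf CE}(\gg)\to H_3^{\sf S}(\gg)$ is the corresponding edge map, though for the asserted exactness only their existence is needed.

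This is a routine low-degree extraction from a spectral sequence, so I do not expect a genuine obstacle. The only inputs beyond bookkeeping are the vanishing statements $H_{0,m}^{\sf CE}(\gg)=H_{1,m}^{\sf CE}(\gg)=0$ for $m>0$ from Remark~\ref{rem_CE_spec}: these collapse the low-degree corner of $E^2$ enough to leave only two total-degree-$3$ terms and to kill the potentially troublesome $d^2$ leaving $(3,0)$. The single point needing a moment of care is checking that no higher differential ($d^3$ and beyond) reaches $(2,1)$ or $(3,0)$, but this is immediate from the support of the spectral sequence together with those vanishings. (The same two-corner analysis run in total degree $2$ recovers Corollary~\ref{cor_H_2S}.)
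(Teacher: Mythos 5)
Your argument is correct and is exactly the intended derivation: the paper treats this corollary as an immediate consequence of Theorem \ref{th_CE} and the vanishing $H^{\sf CE}_{0,i}=H^{\sf CE}_{1,i}=0$ ($i>0$) from Remark \ref{rem_CE_spec}, which is precisely the low-degree spectral-sequence bookkeeping you carry out (only $(3,0)$ and $(2,1)$ survive in total degree $3$, $E^\infty_{3,0}=H_3^{\sf CE}$, $E^\infty_{2,1}=\operatorname{coker}(d^2\colon H_4^{\sf CE}\to H_{2,1}^{\sf CE})$, spliced via the filtration exact sequence). Your identification of the filtration direction (lower Chevalley--Eilenberg degree giving the subobject) and of the incoming $d^2$ from $(4,0)$ is the right reading of the paper's indexing, so nothing is missing.
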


The aim of the rest of this section is to give a more detailed description of the functor $H_{2,1}^{\sf CE}(\gg)$ and the map $\partial_{3,1} : L_1\Lambda^3 \to L_1 \Lambda^2$ in the spectral sequence in the case $\kk=\ZZ$ (Theorem \ref{th_H21}). We use descriptions of the functors $L_1\Lambda^2$ and $L_1 \Lambda^3$ that can be found in \cite{breen1999functorial}, \cite{breen2011derived}, \cite{jeanphdthesis} and some additional work for element-wise computations with this functors.

\subsection{First derived functors of some elementary functors}
 
In this subsection we follow the notations of Breen's paper \cite{breen1999functorial} and Jean's thesis \cite{jeanphdthesis} and intensively use some of their results. Further in this section we assume that $\kk=\ZZ.$ 

For any $n\geq 1$ and an abelian group $A$ we denote by  $${}_nA=\{a\in A\mid na=0\}$$
its $n$-torsion subgroup. For any abelian groups $A,B$ Breen defines a natural map 
$$\tau_n:{}_nA\otimes {}_nB \longrightarrow {\sf Tor}(A,B)$$
such that the group ${\sf Tor}(A,B)$ is generated by the elements $\tau_n(a,b)$ for all $n$ (\cite[Prop. 3.5]{breen1999functorial}). Moreover, the group ${\sf Tor}(A,B)$
can be presented as an abelian group with generators $\tau_n(a\otimes b), a,b\in {}_nA$ and relations 
$$\tau_{nm}(a\otimes b)=\tau_n(ma\otimes b)=\tau_n(a\otimes m b)$$
for $a,b\in {}_{nm}A.$
Breen  gives a conceptual definition of this homomorphism using the language of the derived category $D({\sf Ab})$ but for our purposes we prefer to use an  explicit language of resolutions.  
Let $P_\bullet \to A$ be a flat (chain) resolution of $A.$ For any $a\in A$ we choose  its preimage $v(a)\in P_0$ and for $a\in {}_nA$  we denote by  $u(a)\in P_1$ an element such that $\partial(u(a))=nv(a).$ Then  $\tau_n(a\otimes b)$ corresponds to the class of $u(a)\otimes b\in P_1\otimes B$ in $H_1(P_\bullet \otimes B)\cong {\sf Tor}(A,B).$
\begin{equation}
\tau_n(a\otimes b) \leftrightarrow [u(a)\otimes b] \in H_1( P_\bullet \otimes B).    
\end{equation}
It is easy to check that this definition is equivalent to the definition of Breen \cite[(3.6)]{breen1999functorial}.

More symmetrically, if $Q_\bullet \to B$ is a flat resolution of $B,$ then $\tau_n(a\otimes b)$ corresponds to the class of $u(a)\otimes v(b)-v(a)\otimes u(b)\in (P_1\otimes Q_0)\oplus (P_0\otimes Q_1) $ in $H_1(P_\bullet\otimes  Q_\bullet).$ Indeed, it is a cycle and the quasi-isomorphism $P_\bullet \otimes Q_\bullet \to P_\bullet \otimes B$ 
sends it to $u(a)\otimes b.$
\begin{equation}
  \tau_n(a\otimes b) \leftrightarrow  [u(a)\otimes  v(b) -  v(a)\otimes u(b)]\in H_1(P_\bullet\otimes Q_\bullet).  
\end{equation}

Consider the functor of the  tensor square $$\otimes^2:{\sf Ab}\to {\sf Ab}.$$ The Eilenberg-Zilber theorem implies that its first derived functor is isomorphic to the Tor functor
$$  
L_1{\otimes}^2(A) \cong {\sf Tor}(A,A)
$$ 
The composition of $\tau_n$ with this isomorphism will be denoted by
$$\tilde \tau_n : {}_nA \otimes {}_nA \longrightarrow L_1{\otimes}^2(A).$$
Now take a {\it simplicial} flat resolution  ${\bf P}_\bullet\to A.$ For $a\in A$ we fix some its preimage ${\bf v}(a)\in {\bf P}_0$  and for $a\in {}_nA$ we denote by ${\bf u}(a)\in  {\bf P}_1$  an element such that $d_1({\bf u}(a))=0$ and $d_0({\bf u}(a))=n{\bf v}(a).$

\begin{lemma}\label{lemma_L_1tensor} For any flat simplicial resolution ${\bf P}_\bullet\to A$ and $a,b\in {}_nA$
the element $\tilde \tau_n(a\otimes b) \in L_1{\otimes}^2(A)$ corresponds to the class of the element $$s_0{\bf v}(a)\otimes {\bf u}(b)-{\bf u}(a)\otimes s_0{\bf v}(b)$$ in $\pi_1(\otimes^2 {\bf P}_\bullet).$ 
\end{lemma}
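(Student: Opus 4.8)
The plan is to run the explicit resolution description of $\tau_n$ (hence of $\tilde\tau_n$) recorded just above, transported along the Eilenberg--Zilber map, and then read off the answer in degree $1$.

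First I would take the normalized Moore complex $P_\bullet:=N{\bf P}_\bullet$, with $N_k{\bf P}=\bigcap_{i\geq 1}\ker(d_i)\subseteq {\bf P}_k$ and differential induced by $d_0$. Since ${\bf P}_\bullet\to A$ is a flat simplicial resolution, $P_\bullet\to A$ is a flat chain resolution in the ordinary sense, and by definition of the Dold--Puppe derived functor (as used in the proof of Lemma \ref{lemma_derived_DP}) one has $\pi_*(\otimes^2{\bf P}_\bullet)=L_*{\otimes}^2(A)$. With this choice of $P_\bullet$, the element ${\bf v}(a)\in{\bf P}_0=P_0$ is a preimage of $a$, and ${\bf u}(a)$ — which by hypothesis satisfies $d_1{\bf u}(a)=0$ and $d_0{\bf u}(a)=n{\bf v}(a)$ — lies in $P_1=N_1{\bf P}$ and satisfies $\partial{\bf u}(a)=n\,{\bf v}(a)$. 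Thus ${\bf v}(-),{\bf u}(-)$ are exactly the data called $v(-),u(-)$ in the symmetric resolution description of $\tau_n$, applied with $Q_\bullet=P_\bullet$; since $P_\bullet$ is flat, $H_1(P_\bullet\otimes P_\bullet)\cong{\sf Tor}(A,A)$, and under the resulting isomorphism $L_1{\otimes}^2(A)\cong{\sf Tor}(A,A)\cong H_1(P_\bullet\otimes P_\bullet)$ the class $\tilde\tau_n(a\otimes b)$ is represented by the cycle ${\bf u}(a)\otimes{\bf v}(b)-{\bf v}(a)\otimes{\bf u}(b)\in (P_1\otimes P_0)\oplus(P_0\otimes P_1)$.

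Next I would transport this class along the Eilenberg--Zilber equivalence. The shuffle map $\nabla:N{\bf P}_\bullet\otimes N{\bf P}_\bullet\to N(\otimes^2{\bf P}_\bullet)$ is a natural chain homotopy equivalence realizing $H_*(P_\bullet\otimes P_\bullet)\cong\pi_*(\otimes^2{\bf P}_\bullet)$, and in degree $1$ it is forced — there are no nontrivial shuffles of a one-element set — so on $P_1\otimes P_0$ it is $x\otimes y\mapsto x\otimes s_0 y$ and on $P_0\otimes P_1$ it is $x\otimes y\mapsto s_0 x\otimes y$. Applying $\nabla$ to the cycle above yields ${\bf u}(a)\otimes s_0{\bf v}(b)-s_0{\bf v}(a)\otimes{\bf u}(b)$, whose class in $\pi_1(\otimes^2{\bf P}_\bullet)$ is therefore $\tilde\tau_n(a\otimes b)$; this is the asserted element. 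As a sanity check one verifies directly that it is a $1$-cycle of $N(\otimes^2{\bf P}_\bullet)$: it is annihilated by $d_1\otimes d_1$, and its $(d_0\otimes d_0)$-boundary equals $n{\bf v}(a)\otimes{\bf v}(b)-{\bf v}(a)\otimes n{\bf v}(b)=0$.

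The step I expect to be the main obstacle is purely bookkeeping: arranging the conventions so that the chain of identifications $L_1{\otimes}^2(A)\cong{\sf Tor}(A,A)\cong H_1(P_\bullet\otimes P_\bullet)\cong\pi_1(\otimes^2{\bf P}_\bullet)$ composes to precisely the isomorphism defining $\tilde\tau_n$, and in particular pinning down the overall sign (the degree-$1$ shuffle map carries no sign, so the sign in the statement is governed by the chosen orientation of the Eilenberg--Zilber isomorphism together with the Koszul sign in $P_\bullet\otimes P_\bullet$; swapping the two tensor factors of the representing cycle flips it). Everything else is direct substitution into the formulas already established above.
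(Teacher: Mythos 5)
Your argument is correct and is essentially the paper's own proof: pass to the Moore complex $P_\bullet=N({\bf P}_\bullet)$, use the symmetric resolution description of $\tau_n$ with $Q_\bullet=P_\bullet$, and push the representing cycle $u(a)\otimes v(b)-v(a)\otimes u(b)$ through the degree-one Eilenberg--Zilber shuffle map $p\otimes q\mapsto s_0p\otimes q$, $q\otimes p\mapsto q\otimes s_0p$, exactly as the paper does. The overall sign discrepancy you flag (your representative is the negative of the element displayed in the lemma) occurs in the paper's own computation as well and is harmless for the way the lemma is used later.
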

\begin{proof}
Consider the Moore complex of the simplicial resolution $P_\bullet=N({\bf P}_\bullet).$ Then $P_\bullet$ is a flat (chain) resolution of $A$ and we can choose $v(a)$ and $u(a)$ such that the embedding $P_i\hookrightarrow {\bf P}_i$ sends ${v}(a)$ to ${\bf v}(a)$ and $u(a)$ to ${\bf u}(a).$ The Eilenber-Zilber map $P_\bullet \otimes P_\bullet \to N( {\bf P}_\bullet \otimes {\bf P}_\bullet)$ in this dimension acts as follows
$$P_0\otimes P_1 \to N({\bf P}_\bullet\otimes {\bf P}_\bullet)_1, \hspace{1cm} p\otimes q \mapsto s_0p\otimes q,$$
$$P_1\otimes P_0 \to N({\bf P}_\bullet\otimes {\bf P}_\bullet)_1, \hspace{1cm} q\otimes p \mapsto q\otimes s_0p.$$
The assertion follows. 
\end{proof}

Following Breen we denote by $\Omega$ the first derived functor of the functor of exterior square  $\Lambda^2:{\sf Ab}\to {\sf Ab}$ 
$$\Omega A:=L_{1}\Lambda^2(A).$$ The natural transformation $\Lambda^2 A \to {\otimes}^2 A,$ given by $a\wedge b \mapsto a\otimes b - b\otimes a$ induces an embedding \cite[Th.2.3.3]{jeanphdthesis}, \cite[\S 2.2]{breen2011derived} 
$$\Omega A \mono L^1{\otimes}^2 (A ) \cong {\sf Tor}(A,A).$$ 
Moreover, there is a natural action of the symmetric group $\Sigma_2$ on ${\sf Tor}(A,A)$ such that $\Omega A$ is isomorphic to the group of invariants of this action 
$$\Omega A\cong {\sf Tor}(A,A)^{\Sigma_2}.$$ 
The morphism $\tau_n :{}_nA\otimes {}_nA\to {\sf Tor}(A,A)$ respects the action of $\Sigma_2$ and induces a homomorphism from the divided square of ${}_nA$
$$\lambda_n:\Gamma^2({}_nA)\to \Omega A$$
such that the diagram
$$
\begin{tikzcd}
\Gamma^2({}_nA)\arrow{rr}{\lambda_n}
\arrow{d} & & \Omega A\arrow[rightarrowtail]{d} \\
{}_n A \otimes {}_n A\arrow{rr}{\tau_n} & & {\sf Tor}(A,A)
\end{tikzcd}
$$
is commutative \cite[(5.6)]{breen1999functorial}. 
The abelian group $\Omega A$ is generated by elements of the form  $$\omega_{2}^n(a):=\lambda_n(\gamma_{2}(a)),\hspace{1cm} a\in {}_nA.$$
We also consider the following elements in $\Omega A$ $$\omega^n_1(a)*\omega^n_1(b)
:=\omega_2^n(a+b)-\omega_2^n(a)-\omega_2^n(b)=\lambda_n(\gamma_1(a)\gamma_1(b)).$$

\begin{lemma}\label{lemma_L1exteriorSquare}
Let $a\in {}_nA$ and ${\bf P}_\bullet$ be a flat simplicial resolution of $A.$  Then the element $\omega^n_2(a)\in \Omega A$ corresponds to the class of  $$s_0 {\bf v}(a)\wedge {\bf u}(a)$$ in $\pi_1(\Lambda^2 {\bf P}_\bullet).$ The element $\omega_1^n(a)*\omega_1^n(b)$ corresponds to the class of 
$$s_0{\bf v}(a) \wedge {\bf u}(b) + s_0{\bf v}(b)\wedge {\bf u}(a).$$
\end{lemma}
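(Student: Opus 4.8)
The plan is to pin down the class by pushing it forward along the injective embedding $\Omega A\mono L_1\otimes^2(A)\cong{\sf Tor}(A,A)$ and comparing with Lemma \ref{lemma_L_1tensor} together with the commutative square relating $\lambda_n$ and $\tau_n$.

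First I would check that $z:=s_0{\bf v}(a)\wedge{\bf u}(a)\in\Lambda^2{\bf P}_1$ is a $1$-cycle of the (unnormalized) chain complex of $\Lambda^2{\bf P}_\bullet$. Since the faces of $\Lambda^2{\bf P}_\bullet$ are $\Lambda^2$ applied to those of ${\bf P}_\bullet$, we have $d_i(x\wedge y)=d_i(x)\wedge d_i(y)$; using $d_0s_0=d_1s_0={\sf id}$, $d_1{\bf u}(a)=0$, $d_0{\bf u}(a)=n{\bf v}(a)$ and $x\wedge x=0$ one gets $d_0z=n({\bf v}(a)\wedge{\bf v}(a))=0$ and $d_1z={\bf v}(a)\wedge 0=0$, so $z$ represents a class in $\pi_1(\Lambda^2{\bf P}_\bullet)\cong\Omega A$. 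The embedding $\Omega A\mono L_1\otimes^2(A)$ is induced by the natural transformation $x\wedge y\mapsto x\otimes y-y\otimes x$ applied degreewise to ${\bf P}_\bullet$, so $[z]$ is sent to the class of $s_0{\bf v}(a)\otimes{\bf u}(a)-{\bf u}(a)\otimes s_0{\bf v}(a)$, which by Lemma \ref{lemma_L_1tensor} with $b=a$ equals $\tilde\tau_n(a\otimes a)$, i.e. $\tau_n(a\otimes a)$ under $L_1\otimes^2(A)\cong{\sf Tor}(A,A)$.

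On the other hand the commutative square relating $\lambda_n$, $\tau_n$, the canonical map $\Gamma^2({}_nA)\to{}_nA\otimes{}_nA$ (which sends $\gamma_2(a)$ to $a\otimes a$) and the embedding $\Omega A\mono{\sf Tor}(A,A)$ shows that $\omega^n_2(a)=\lambda_n(\gamma_2(a))$ is also sent to $\tau_n(a\otimes a)$. Since the embedding is injective, $[z]=\omega^n_2(a)$, which is the first assertion (and incidentally shows that $[z]$ does not depend on the choices of ${\bf v}(a)$ and ${\bf u}(a)$). For the second assertion I would run the same argument with $z':=s_0{\bf v}(a)\wedge{\bf u}(b)+s_0{\bf v}(b)\wedge{\bf u}(a)$: the cycle check is identical, now using antisymmetry so that $d_0z'=n({\bf v}(a)\wedge{\bf v}(b))+n({\bf v}(b)\wedge{\bf v}(a))=0$; the image of $[z']$ under the embedding is $\tilde\tau_n(a\otimes b)+\tilde\tau_n(b\otimes a)$ by two applications of Lemma \ref{lemma_L_1tensor}, hence $\tau_n(a\otimes b+b\otimes a)$; and since $\gamma_1(a)\gamma_1(b)=\gamma_2(a+b)-\gamma_2(a)-\gamma_2(b)$ maps to $a\otimes b+b\otimes a$, the same square gives that $\omega^n_1(a)*\omega^n_1(b)=\lambda_n(\gamma_1(a)\gamma_1(b))$ has the same image; injectivity of the embedding finishes the proof.

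The one point I would take real care over is the coherence of the three identifications involved: that a flat simplicial resolution ${\bf P}_\bullet$ of $A$ simultaneously computes $L_*\Lambda^2(A)$ and $L_*\otimes^2(A)$ (so that $\pi_1(\Lambda^2{\bf P}_\bullet)$ legitimately denotes $\Omega A$), and that under these models the embedding $\Omega A\mono L_1\otimes^2(A)$ is realized by the degreewise antisymmetrization map while $L_1\otimes^2(A)\cong{\sf Tor}(A,A)$ is the Eilenberg--Zilber isomorphism already used in Lemma \ref{lemma_L_1tensor}. Granting this compatibility --- standard over $\ZZ$ and implicit in the earlier setup --- everything else is the short cycle computation together with injectivity of the embedding, which does the real work.
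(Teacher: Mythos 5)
Your proof is correct and follows essentially the same route as the paper: push the wedge class along the antisymmetrization $\Lambda^2{\bf P}_\bullet\to\otimes^2{\bf P}_\bullet$, identify its image via Lemma \ref{lemma_L_1tensor}, and conclude from the commutative square relating $\lambda_n$ and $\tau_n$ together with the injectivity of $\Omega A\mono{\sf Tor}(A,A)$. The only (harmless) divergence is in the second claim, where the paper chooses adapted preimages with ${\bf v}'(a+b)={\bf v}(a)+{\bf v}(b)$ and ${\bf u}'(a+b)={\bf u}(a)+{\bf u}(b)$ and expands $\omega_2^n(a+b)-\omega_2^n(a)-\omega_2^n(b)$, whereas you rerun the embedding computation directly on $s_0{\bf v}(a)\wedge{\bf u}(b)+s_0{\bf v}(b)\wedge{\bf u}(a)$ using $\gamma_1(a)\gamma_1(b)\mapsto a\otimes b+b\otimes a$; both rest on the same facts, and your version additionally records the cycle check that the paper leaves implicit.
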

\begin{proof}
The map $\Lambda^2 {\bf P}_\bullet \to {\otimes}^2{\bf P}_\bullet$ sends $s_0 {\bf v}(a)\wedge {\bf u}(a)$ to the element $s_0 {\bf v}(a)\otimes {\bf u}(a) - {\bf u}(a)\otimes s_0 {\bf v}(a)$ that corresponds to $\tau_n(a\otimes a)$ by Lemma \ref{lemma_L_1tensor}. Since the image of $\gamma_2(a)$ in $ {}_nA\otimes {}_nA$ is $a\otimes a,$ we obtain that $s_0 {\bf v}(a) \wedge {\bf u}(a)$ corresponds to $\omega_2^n(a)$. 

Note that the class of the element $s_0 {\bf v}(a) \wedge {\bf u}(a)$ does not depend on the choice of ${\bf v}$ and ${\bf u}.$ We can choose new preimages families of ${\bf v}',$ ${\bf u}'$ such that for two fixed elements $a,b\in {}_nA$ we have ${\bf v}'(a+b)={\bf v}(a)+{\bf v}(b)$ and   ${\bf u}'(a+b)={\bf u}(a)+{\bf u}(b).$ Then the element $\omega_1^n(a)*\omega_1^n(b)=\omega_2^n(a+b) - \omega_2^n(a) - \omega_2^n(b)$ corresponds to the class of  
$$s_0{\bf v}'(a+b)\otimes {\bf u}'(a+b) - s_0 {\bf v}(a)\otimes {\bf u}(a) - s_0 {\bf v}(b)\otimes {\bf u}(b).$$
The assertion follows.
\end{proof}

The multiplication map $\Lambda^sA\otimes \Lambda^tA\to \Lambda^{s+t}A$ induces a map in the derived category 
$L\Lambda^s(A) \otimes^{L} L\Lambda^t(A) \longrightarrow L\Lambda^{s+t}(A).$ The K\"uneth formula then gives a map
$L_1\Lambda^2 (A) \otimes L_0 \Lambda^1 (A) \to L_1 \Lambda^3(A) $ that can be rewritten as 
$$\alpha: \Omega A\otimes A \epi  L_1\Lambda^3(A) $$
Breen proved that the map is an epimorphism and give some more detailed description of $L_1\Lambda^3$ in these terms and Jean generalized this for $L_i\Lambda^n.$ 
We set $$\alpha_n(a,b):=\alpha(\omega^n_2(a)\otimes b)$$ for $a\in {}_nA$ and $b\in A.$ The elements of the form $\alpha_n(a,b)$ generate $L_1\Lambda^3(A)$ \cite[Prop.6.15]{breen1999functorial}. 

\begin{lemma} \label{lemma_alpha_n}
For any simplicial resolution ${\bf P}_\bullet$ of an abelian group $A$ an element $\alpha_n(a,b)\in L_1\Lambda^3(A)$ corresponds to the class of $$s_0{\bf v}(a)\wedge {\bf u}(a) \wedge s_0{\bf v}(b)$$ in $\pi_1(\Lambda^3 {\bf P}_\bullet ).$ 
\end{lemma}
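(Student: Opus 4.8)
The plan is to mimic the proofs of Lemma \ref{lemma_L1exteriorSquare} and Lemma \ref{lemma_L_1tensor}, tracking the generating element through the chain of comparison maps used to define $\alpha_n$. Recall that $\alpha_n(a,b) = \alpha(\omega_2^n(a)\otimes b)$, where $\alpha\colon \Omega A \otimes A \to L_1\Lambda^3(A)$ is induced by the multiplication $\Lambda^2 A \otimes \Lambda^1 A \to \Lambda^3 A$ via the K\"unneth isomorphism $L_1\Lambda^2(A)\otimes L_0\Lambda^1(A) \xrightarrow{\sim} L_1(\Lambda^2 \otimes^{L}\Lambda^1)(A)$ followed by the map on $L_1$ induced by multiplication. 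So the first step is to fix a flat simplicial resolution ${\bf P}_\bullet \to A$ and recall, from Lemma \ref{lemma_L1exteriorSquare}, that $\omega_2^n(a)$ is represented in $\pi_1(\Lambda^2{\bf P}_\bullet)$ by the cycle $s_0{\bf v}(a)\wedge {\bf u}(a)$, while $b \in A = L_0\Lambda^1(A)$ is represented in $\pi_0({\bf P}_\bullet)$ by ${\bf v}(b)$ (equivalently by its degenerate lift $s_0{\bf v}(b)$ in degree $1$).

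Next I would make the K\"unneth/Eilenberg--Zilber comparison explicit in degree $1$. The derived tensor product $\Lambda^2{\bf P}_\bullet \otimes^{L} \Lambda^1{\bf P}_\bullet$ is computed by the diagonal simplicial module $(\Lambda^2{\bf P}\otimes \Lambda^1{\bf P})_\bullet$, and the Eilenberg--Zilber (shuffle) map sends a tensor of a degree-$1$ class in the first factor with a degree-$0$ class in the second to $(\text{class})\otimes s_0(\text{class})$ — exactly as in the two displayed formulas in the proof of Lemma \ref{lemma_L_1tensor}. Hence the image of $\omega_2^n(a)\otimes b$ under the K\"unneth map is the class of $\bigl(s_0{\bf v}(a)\wedge {\bf u}(a)\bigr)\otimes s_0{\bf v}(b)$ in $\pi_1\bigl((\Lambda^2{\bf P}\otimes \Lambda^1{\bf P})_\bullet\bigr)$. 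Applying the multiplication map $\Lambda^2{\bf P}_n \otimes \Lambda^1{\bf P}_n \to \Lambda^3{\bf P}_n$ degreewise, this cycle maps to $s_0{\bf v}(a)\wedge {\bf u}(a)\wedge s_0{\bf v}(b)$ in $\pi_1(\Lambda^3{\bf P}_\bullet)$, which is exactly $\alpha_n(a,b)$ by definition of $\alpha$. One should also check that $s_0{\bf v}(a)\wedge {\bf u}(a)\wedge s_0{\bf v}(b)$ is genuinely a cycle in the normalized (or Moore) complex: since $d_1{\bf u}(a)=0$, $d_0{\bf u}(a)=n{\bf v}(a)$, and $d_0 s_0 = d_1 s_0 = \mathrm{id}$, the boundary $\sum (-1)^i d_i$ applied to this element vanishes, matching the fact that $n{\bf v}(a)\wedge {\bf v}(a)\wedge {\bf v}(b)=0$ in $\Lambda^3{\bf P}_0$.

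The main obstacle is bookkeeping: pinning down precisely which Eilenberg--Zilber/shuffle representative is used so that the degenerate lift $s_0{\bf v}(b)$ appears in the third wedge slot with the correct sign, and confirming that the chosen representative is compatible with the normalization passing from the diagonal simplicial object to the tensor product of Moore complexes. This is the same subtlety already handled in Lemma \ref{lemma_L_1tensor}, so I would phrase the argument as a direct extension of that computation, invoking the shuffle formula only in the one nontrivial bidegree $(1,0)$. Everything else — that the elements $\alpha_n(a,b)$ generate $L_1\Lambda^3(A)$, that $\omega_2^n(a)$ has the stated representative — is already available from \cite[Prop.~6.15]{breen1999functorial} and Lemma \ref{lemma_L1exteriorSquare}, so the proof reduces to transporting one explicit cycle along two explicit comparison maps.
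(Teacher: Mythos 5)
Your proposal is correct and follows essentially the same route as the paper's own proof: represent $\omega_2^n(a)$ by $s_0{\bf v}(a)\wedge{\bf u}(a)$ via Lemma \ref{lemma_L1exteriorSquare}, apply the Eilenberg--Zilber/K\"unneth map in bidegree $(1,0)$ (which sends $\theta\otimes p$ to $\theta\otimes s_0 p$), and then push forward along the degreewise multiplication into $\Lambda^3{\bf P}_\bullet$. The extra verification that the representative is a Moore cycle is a harmless addition the paper leaves implicit.
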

\begin{proof} The K\"unneth map in the simplicial case 
$\pi_1(\Lambda^2 {\bf P}_\bullet )\otimes \pi_0({\bf P} ) \to \pi_1( \Lambda^2 {\bf P}_\bullet \otimes {\bf P}_\bullet ) $ 
is the composition of the chain K\"unneth map and the Eilenberg-Zilber map
$$N(\Lambda^2 {\bf P}_\bullet)_1\otimes N({\bf P}_\bullet)_0 \to N(\Lambda^2 {\bf P}_\bullet \otimes {\bf P}_\bullet  )_1 $$
which  in this dimension is given by 
$$ \theta \otimes p \mapsto \theta \otimes s_0p.$$
By Lemma \ref{lemma_L1exteriorSquare} the element $\omega_2^n(a)$ corresponds to the class of the element $s_0{\bf v}(a) \wedge {\bf u}(a)$ in $\pi_1( \Lambda^2 {\bf P}_\bullet ).$ The element $b\in A$ corresponds to ${\bf v}(a)$ in $\pi_0( {\bf P}_\bullet).$ Therefore the Eilenberg-Zilber map sends the element corresponding to $\omega_2^n(a)\otimes b$ to $s_0{\bf v}(a)\wedge {\bf u}(a) \otimes s_0{\bf v}(b).$ The multiplication  map sends it to  $s_0{\bf v}(a)\wedge {\bf u}(a) \wedge s_0{\bf v}(b).$
\end{proof}

\subsection{A description of $H_{2,1}^{\sf CE}(\gg)$} 

\begin{theorem}\label{th_H21}
For any Lie algebra $\gg$ over $\ZZ$ there is an  exact sequence
\begin{equation}\label{eq_H21CE}
\Omega \gg \otimes \gg \overset{\varphi}\longrightarrow \Omega \gg \longrightarrow H_{2,1}^{\sf CE}(\gg) \longrightarrow 0,    
\end{equation}
where 
\begin{equation}\label{eq_phi}
\varphi(\omega_2^n(a)\otimes b )= \omega_1^n([a,b])*\omega^n_1(a).    
\end{equation}
\end{theorem}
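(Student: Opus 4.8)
The starting point is the exact sequence from Remark \ref{rem_CE_spec},
$$L_1\Lambda^3(\gg) \overset{\partial_{3,1}}\longrightarrow L_1\Lambda^2(\gg) \longrightarrow H_{2,1}^{\sf CE}(\gg)\longrightarrow 0,$$
i.e. $H_{2,1}^{\sf CE}(\gg)\cong \Omega\gg / \operatorname{im}(\partial_{3,1})$. So everything reduces to identifying the image of $\partial_{3,1}$. First I would recall from the preceding subsection that $L_1\Lambda^3(\gg)$ is generated by the elements $\alpha_n(a,b)=\alpha(\omega_2^n(a)\otimes b)$ with $a\in {}_n(\UU^{\sf M}\gg)$ and $b\in\UU^{\sf M}\gg$ (Breen, \cite[Prop.~6.15]{breen1999functorial}); hence the image of $\partial_{3,1}$ is generated by the elements $\partial_{3,1}(\alpha_n(a,b))$. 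Defining $\varphi$ on $\Omega\gg\otimes\gg$ by $\varphi(\omega_2^n(a)\otimes b)=\partial_{3,1}(\alpha_n(a,b))$ will then automatically make \eqref{eq_H21CE} exact, provided the epimorphism $\alpha\colon\Omega\gg\otimes\gg\epi L_1\Lambda^3(\gg)$ intertwines $\varphi$ with $\partial_{3,1}$. The whole content of the theorem is therefore the explicit formula \eqref{eq_phi}: that $\partial_{3,1}(\alpha_n(a,b))=\omega_1^n([a,b])*\omega_1^n(a)$.

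To compute this I would fix a free simplicial resolution $\ff_\bullet={\sf Bar}(\GG^{\sf S})(\gg)$, so that (by Lemma \ref{lemma_derived_DP}) $L_m\Lambda^j(\gg)=\pi_m(\Lambda^j\UU^{\sf M}\ff_\bullet)$ and the differential $\partial_{3,1}$ is induced on $\pi_1$ by the Chevalley–Eilenberg differential $\Lambda^3\UU^{\sf M}\ff_\bullet\to\Lambda^2\UU^{\sf M}\ff_\bullet$ applied degreewise. Set ${\bf P}_\bullet=\UU^{\sf M}\ff_\bullet$, a flat (indeed free) simplicial resolution of $\UU^{\sf M}\gg$. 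By Lemma \ref{lemma_alpha_n} the class $\alpha_n(a,b)$ is represented by the $1$-cycle
$$s_0{\bf v}(a)\wedge {\bf u}(a)\wedge s_0{\bf v}(b)\ \in\ \Lambda^3{\bf P}_1,$$
where ${\bf v},{\bf u}$ are the lifts fixed before Lemma \ref{lemma_L_1tensor}, with $d_1{\bf u}(a)=0$, $d_0{\bf u}(a)=n{\bf v}(a)$. Now I apply the Chevalley–Eilenberg differential $\partial\colon\Lambda^3\to\Lambda^2$ in simplicial degree $1$. Since ${\bf P}_1=\UU^{\sf M}\ff_1$ carries a Lie bracket, $\partial$ sends a wedge $x\wedge y\wedge z$ to $[x,y]\wedge z-[x,z]\wedge y+[y,z]\wedge x$; applied to $s_0{\bf v}(a)\wedge {\bf u}(a)\wedge s_0{\bf v}(b)$ this produces three terms:
$$[s_0{\bf v}(a),{\bf u}(a)]\wedge s_0{\bf v}(b)\ -\ [s_0{\bf v}(a),s_0{\bf v}(b)]\wedge {\bf u}(a)\ +\ [{\bf u}(a),s_0{\bf v}(b)]\wedge s_0{\bf v}(a).$$
The key simplification is that $s_0$ is a ring (Lie) homomorphism, so $[s_0{\bf v}(a),s_0{\bf v}(b)]=s_0[{\bf v}(a),{\bf v}(b)]=s_0{\bf v}([a,b])$ (after adjusting the lift ${\bf v}([a,b])$, which is harmless since the $\pi_1$-class is independent of these choices), and likewise $[s_0{\bf v}(a),{\bf u}(a)]$ and $[{\bf u}(a),s_0{\bf v}(b)]$ are degenerate-looking elements of ${\bf P}_1$ which, one checks, contribute classes expressible through the bracket. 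Matching the resulting $1$-cycle against the representatives in Lemma \ref{lemma_L1exteriorSquare} — namely $\omega_2^n(c)\leftrightarrow s_0{\bf v}(c)\wedge {\bf u}(c)$ and $\omega_1^n(c)*\omega_1^n(d)\leftrightarrow s_0{\bf v}(c)\wedge {\bf u}(d)+s_0{\bf v}(d)\wedge {\bf u}(c)$ — is the computation that yields $\partial_{3,1}(\alpha_n(a,b))=\omega_1^n([a,b])*\omega_1^n(a)$: the middle term $-s_0{\bf v}([a,b])\wedge {\bf u}(a)$ already looks like half of $\omega_1^n([a,b])*\omega_1^n(a)$, and the other half $s_0{\bf v}(a)\wedge {\bf u}([a,b])$ must be extracted from the two bracket terms; the first and third terms, which involve $[{\bf u}(a),-]$ and lie in the image of $d_1$-annihilated elements, combine to supply exactly this. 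One also verifies separately that $\varphi$ so defined is well defined and bilinear on $\Omega\gg\otimes\gg$ (both sides of \eqref{eq_phi} are visibly additive in $b$ via Lemma \ref{lemma_L1exteriorSquare}, and additive in the $\omega_2^n(a)$ generators by the standard polarization $\omega_2^n(a+a')-\omega_2^n(a)-\omega_2^n(a')=\omega_1^n(a)*\omega_1^n(a')$ together with the $\tau_{nm}$-compatibility relations).

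The main obstacle is precisely the bracket-term bookkeeping in the last step: $[s_0{\bf v}(a),{\bf u}(a)]$ and $[{\bf u}(a),s_0{\bf v}(b)]$ are \emph{not} of the plain form $s_0{\bf v}(c)$ or ${\bf u}(c)$, so one cannot read off their $\pi_1$-classes directly from Lemmas \ref{lemma_L_1tensor}–\ref{lemma_alpha_n}; one needs to argue, using the simplicial identities ($d_0{\bf u}(a)=n{\bf v}(a)$, $d_1{\bf u}(a)=0$, $d_0s_0=d_1s_0={\sf id}$) together with the fact that $\partial_{3,1}(\alpha_n(a,b))$ is a well-defined element of $\Omega\gg$ (hence lands in the subgroup $\Omega\gg\hookrightarrow{\sf Tor}$ of $\Sigma_2$-invariants), that these terms may be replaced up to boundaries by appropriate combinations of $s_0{\bf v}\wedge {\bf u}$ expressions. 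A clean way to handle this is to work on the Moore complex $N({\bf P}_\bullet)$ and exploit the Eilenberg–Zilber/Künneth description of $\alpha$ as in the proof of Lemma \ref{lemma_alpha_n}, tracking the differential through $N(\Lambda^2{\bf P}_\bullet\otimes {\bf P}_\bullet)_1$; there the identity $[{\bf u}(a),s_0{\bf v}(b)]=[{\bf u}(a), s_0{\bf v}(b)]$ with $d_0[{\bf u}(a),s_0{\bf v}(b)]=n[{\bf v}(a),{\bf v}(b)]=n{\bf v}([a,b])$ and $d_1[{\bf u}(a),s_0{\bf v}(b)]=[d_1{\bf u}(a),{\bf v}(b)]=0$ shows that $[{\bf u}(a),s_0{\bf v}(b)]$ is a legitimate choice of ${\bf u}([a,b])$, which is exactly the fact that converts the surviving bracket term into the missing half $s_0{\bf v}(a)\wedge {\bf u}([a,b])$ of $\omega_1^n([a,b])*\omega_1^n(a)$. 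Once that identification is made, collecting terms finishes the proof.
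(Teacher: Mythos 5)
Your overall route is the same as the paper's: start from the exact sequence of Remark \ref{rem_CE_spec}, take $\varphi=\partial_{3,1}\circ\alpha$ (so exactness of \eqref{eq_H21CE} is automatic from surjectivity of $\alpha$, and the separate well-definedness discussion becomes unnecessary), represent $\alpha_n(a,b)$ by $s_0{\bf v}(a)\wedge {\bf u}(a)\wedge s_0{\bf v}(b)$ in $\Lambda^3\, \UU^{\sf M}\ff_1$ with $\ff_\bullet={\sf Bar}(\GG^{\sf S})(\gg)$, apply the Chevalley--Eilenberg differential levelwise, and match the result against the representatives of Lemma \ref{lemma_L1exteriorSquare}. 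Your handling of two of the three resulting summands is exactly the paper's mechanism: choosing ${\bf v}([a,b])=[{\bf v}(a),{\bf v}(b)]$ and noting that $[{\bf u}(a),s_0{\bf v}(b)]$ is a legitimate choice of ${\bf u}([a,b])$ (it is killed by $d_1$ and has $d_0$ equal to $n{\bf v}([a,b])$) converts those terms into $s_0{\bf v}([a,b])\wedge {\bf u}(a)+s_0{\bf v}(a)\wedge {\bf u}([a,b])$, i.e.\ $\omega_1^n([a,b])*\omega_1^n(a)$.

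There is, however, a genuine gap at the remaining summand $[s_0{\bf v}(a),{\bf u}(a)]\wedge s_0{\bf v}(b)$. You assert that the first and third terms ``combine to supply'' the missing half, but in fact the third term alone supplies it, and the first term must be shown to represent $0$ in $\pi_1(\Lambda^2\,\UU^{\sf M}\ff_\bullet)$; your proposal never argues this, and it is not automatic. The paper proves a small but essential lemma for exactly this step: for $\theta,\eta\in\ff_1$ with $\theta$ a Moore $1$-cycle, the element $\theta\wedge\eta$ is a Moore boundary --- this uses acyclicity of the resolution ($Z_1(\ff_\bullet)=B_1(\ff_\bullet)$, so $\theta=d_0\zeta$ for some $\zeta\in N_2(\ff_\bullet)$) together with the explicit bounding element $\zeta\wedge s_0\eta\in N_2(\Lambda^2\ff_\bullet)$. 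Since $d_0[s_0{\bf v}(a),{\bf u}(a)]=[{\bf v}(a),n{\bf v}(a)]=0$ and $d_1[s_0{\bf v}(a),{\bf u}(a)]=0$, this lemma disposes of the first summand; without some such argument your computation does not close, because a wedge of a cycle with an arbitrary element is not a boundary in a general simplicial module. Two minor further points: you applied the triple differential with the sign opposite to the paper's convention (the paper has $\partial_3(x_1\wedge x_2\wedge x_3)=-[x_1,x_2]\wedge x_3+[x_1,x_3]\wedge x_2-[x_2,x_3]\wedge x_1$), which would yield $-\omega_1^n([a,b])*\omega_1^n(a)$ --- harmless for the image and for exactness, but it flips the stated formula \eqref{eq_phi}; and the replacements you describe are most cleanly justified, as you partly do, by the freedom in the choice of the lifts ${\bf v}$, ${\bf u}$ rather than by extra boundary arguments.
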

\begin{proof} By Remark \ref{rem_CE_spec} we have an exact sequence $$L_1\Lambda^3(\gg) \overset{\partial_{3,1}}\longrightarrow \Omega \gg \longrightarrow H_{2,1}^{\sf CE}(\gg)\to 0.$$ We define $\varphi$ as the composition of $\alpha:\Omega \gg\otimes \gg\epi L_1\Lambda^3 \gg$ and $\partial_{3,1}.$ Since $\alpha$ is an epimorphism, we obtain an exact sequence \eqref{eq_H21CE}. Then we only need to prove the formula \eqref{eq_phi}. 

Note that $\varphi(\omega^n_2(a)\otimes b)=\partial_{3,1}(\alpha_n(a,b)).$ Then we need to prove the formula 
$$\partial_{3,1}(\alpha_n(a,b)) = \omega_1^n([a,b])*\omega^n_1(a).$$ 
Set $\ff_\bullet={\sf Bar}(\GG^{\sf S})(\gg).$ Hence $\UU^{\sf M}\ff_\bullet $ is a flat resolution of $\UU^{\sf M}\gg.$ By Lemma \ref{lemma_alpha_n} the element $\alpha_n(a,b)$ can be presented by the element 
\begin{equation}\label{eq_alpha_n_2}
s_0{\bf v}(a) \wedge {\bf u}(a) \wedge s_0{\bf v}(b) \in \Lambda^3 \ff_\bullet.   
\end{equation}
The map $\partial_{3,1}$ is induced by the map $\partial_3:\Lambda^3 \gg \to \Lambda^2 \gg$ which is given by 
$$\partial_3( x_1\wedge x_2 \wedge x_3)= - [x_1,x_2] \wedge x_3 +[x_1,x_3] \wedge x_2 - [x_2,x_3] \wedge x_1.$$ 
Then the element 
\eqref{eq_alpha_n_2} 
is mapped to 
\begin{equation}\label{eq_sum_comp}
    - [s_0{\bf v}(a),{\bf u}(a) ] \wedge s_0 {\bf v}(b) +[s_0{\bf v}(a),s_0{\bf v}(b)] \wedge {\bf u}(a) - [{\bf u}(a),s_0{\bf v}(b)] \wedge s_0{\bf v}(a)
\end{equation}

Note that for two elements $\theta, \eta \in \ff_1$ such that $\theta\in Z_1(\ff_\bullet)$  we have that $\theta \wedge \eta \in \Lambda^2 \ff_\bullet$ is a Moore boundary. Indeed, since $Z_1(\ff_\bullet)=B_1(\ff_\bullet),$ there is $\zeta\in N_2(\ff_\bullet) $ such that $d_0(\zeta)=\theta.$ Hence $\zeta\wedge s_0\eta \in N_2(\Lambda^2 \ff_\bullet )$ and $d_0( \zeta \wedge s_0\eta)=\theta \wedge \eta.$  It follows that for any $\theta_1,\theta_2,\eta \in \ff_1,$ if $d_0(\theta_1)=d_0(\theta_2)$ and $d_1(\theta_1)=d_1(\theta_2)=0,$ the elements $$\theta_1 \wedge \eta \hspace{1cm} \text{and} \hspace{1cm} \theta_2 \wedge \eta  $$
differ by a Moore boundary and $\theta_1\wedge \eta$ can be replaced by $\theta_2\wedge \eta$ in any computation up to boundary.  Using this, the first summand  of \eqref{eq_sum_comp} can be replaced by zero, because $d_i([s_0{\bf v}(a),{\bf u}(a)] )=0 $ for $i=0,1;$ the second summand can be replaced by $ s_0{\bf v}([a,b]) \wedge {\bf u}(a);$ and the last summand can be replaced by $-{\bf u}([a,b])\wedge s_0{\bf v}(a).$ Therefore the element $\partial_{3,1}(\alpha_n(a,b))$ corresponds to 
$$s_0{\bf v}([a,b])\wedge {\bf u}(a) + s_0{\bf v}(a) \wedge {\bf u}([a,b]).$$
The assertion follows from Lemma  \ref{lemma_L1exteriorSquare}.
\end{proof}

\subsection{Example}

\begin{proposition}\label{proposition_main_example}
Let $\kk=\ZZ$ and $\ff(x,y)$ be a free Lie algebra over $\ZZ$ generated by two letters. Consider the tensor product 
$$\gg:=\ff(x,y)\otimes \ZZ/2$$
as a Lie algebra over $\ZZ.$ Then
$$H_3^{\sf S}(\gg)\cong  (\ZZ/2)^{\oplus \infty}, \hspace{1cm} H_3^{\sf T}(\gg)\cong (\ZZ/2)^4, \hspace{1cm} H_3^{\sf CE}(\gg)=0,$$
where $(-)^{\oplus \infty}$ denotes a countable direct sum.
Moreover, $H^{\sf CE}_{2,1}(\gg ) \cong (\ZZ/2)^{\oplus \infty},$ $H^{\sf T}_{2n+2}(\gg)=0$ and $H^{\sf T}_{2n+1}(\gg)=(\ZZ/2)^{2^{n+1}}$ for $n\geq 0.$ 
\end{proposition}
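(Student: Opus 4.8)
The plan is to compute everything by finding an explicit small free simplicial resolution of $\UU^{\sf M}\gg$ as a $\ZZ$-module and feeding it into the machinery built above. First I would note that $\gg = \ff(x,y)\otimes \ZZ/2$ is a free $\ZZ/2$-module: picking a homogeneous basis $\{e_\alpha\}$ of $\ff(x,y)$ (Lyndon words, say), the underlying abelian group of $\gg$ is $\bigoplus_\alpha \ZZ/2\, e_\alpha$, so $\UU^{\sf M}\gg \cong (\ZZ/2)^{\oplus\infty}$. The two-step flat resolution $0\to \ZZ \xrightarrow{2} \ZZ\to \ZZ/2\to 0$ of each summand gives $\Omega\gg = {\sf Tor}(\gg,\gg)^{\Sigma_2}$ explicitly, and in fact ${\sf Tor}(\gg,\gg)\cong \gg\otimes\gg$ with the swap action, so $\Omega\gg \cong S^2(\gg)$ (over $\ZZ/2$, $\Gamma^2 \cong S^2$ of a free module), spanned by the $\omega_2^2(e_\alpha)$ and $\omega_1^2(e_\alpha)*\omega_1^2(e_\beta)$.

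Next I would compute $H_3^{\sf CE}(\gg)$. Since $\gg$ is abelian-as-a-module but emphatically not an abelian Lie algebra, the Chevalley–Eilenberg differential is governed entirely by the bracket. Here the key input is that $\gg$, being $\ff(x,y)\otimes\ZZ/2$, is itself a \emph{free} Lie algebra over $\ZZ/2$ on two generators; by \cite[Ch.XIII]{cartan1999homological} (applied over the field $\ZZ/2$) its Chevalley–Eilenberg homology vanishes in degrees $\geq 2$. But one must be careful: $H^{\sf CE}_*(\gg)$ is computed over $\ZZ$, not over $\ZZ/2$. However $\Lambda^n_\ZZ(\UU^{\sf M}\gg) = \Lambda^n_{\ZZ/2}(\gg)$ because $\gg$ is killed by $2$, so the $\ZZ$-complex ${\sf CE}_\bullet(\gg)$ literally equals the $\ZZ/2$-complex, and hence $H_3^{\sf CE}(\gg)=0$. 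Similarly $H_4^{\sf CE}(\gg)=0$, so Corollary \ref{cor_H_3S} collapses to an isomorphism $H_3^{\sf S}(\gg)\cong H_{2,1}^{\sf CE}(\gg)$, reducing everything to $H_{2,1}^{\sf CE}$.

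The heart of the argument is then computing $H_{2,1}^{\sf CE}(\gg)$ via Theorem \ref{th_H21}: it is the cokernel of $\varphi:\Omega\gg\otimes\gg\to\Omega\gg$, $\varphi(\omega_2^n(a)\otimes b)=\omega_1^n([a,b])*\omega_1^n(a)$. Working over $\ZZ/2$ with the basis $\{e_\alpha\}$, all torsion is $2$-torsion so only $n=2$ occurs; the image of $\varphi$ is spanned by $\omega_1^2([a,b])*\omega_1^2(a)$. I would analyze this by bidegree with respect to the grading on $\ff(x,y)$: $[a,b]$ lands in strictly higher degree than $a$, so these relations never involve the "diagonal" generators $\omega_2^2(e_\alpha)$ nor the products $\omega_1^2(e_\alpha)*\omega_1^2(e_\beta)$ with $e_\alpha,e_\beta$ \emph{both} of top available degree. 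A counting argument (using that $\ff(x,y)$ has infinitely many basis elements in unboundedly many degrees, while the relations impose only finitely many constraints per degree and always "point upward") shows that infinitely many generators of $\Omega\gg$ survive in the cokernel. The only genuinely delicate point — and the main obstacle — is bookkeeping: verifying that the span of $\{\omega_1^2([e_\alpha,e_\beta])*\omega_1^2(e_\alpha)\}$ really leaves an infinite-dimensional quotient rather than, through accumulated relations across many degrees, eventually killing "most" generators. I expect to resolve this by exhibiting an explicit surjection $H_{2,1}^{\sf CE}(\gg)\epi (\ZZ/2)^{\oplus\infty}$, e.g. by choosing a graded linear functional that annihilates every $\omega_1^2([a,b])*\omega_1^2(a)$ but is nonzero on infinitely many $\omega_2^2(e_\alpha)$. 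This gives $H_{2,1}^{\sf CE}(\gg)\cong(\ZZ/2)^{\oplus\infty}$, hence $H_3^{\sf S}(\gg)\cong(\ZZ/2)^{\oplus\infty}$.

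Finally, the Tor-homology assertions are a separate and easier computation. By the PBW theorem over $\ZZ/2$ (valid since $\gg$ is free as a $\ZZ/2$-module, hence also as a $\ZZ$-module it satisfies PBW), $U\gg$ over $\ZZ$ coincides with $U_{\ZZ/2}\gg$, which — $\gg$ being a free Lie algebra on $\{x,y\}$ over $\ZZ/2$ — is the free associative $\ZZ/2$-algebra $\ZZ/2\langle x,y\rangle$. Thus $H^{\sf T}_*(\gg)={\sf Tor}^{U\gg}_*(\ZZ,\ZZ)$. I would compute this by base change: ${\sf Tor}^{\ZZ/2\langle x,y\rangle}_*(\ZZ/2,\ZZ/2)$ is concentrated in degrees $0,1$ (it is $\ZZ/2$, $(\ZZ/2)^2$, $0,0,\dots$), and then the universal coefficient / change-of-rings spectral sequence along $\ZZ\to\ZZ/2$, together with the extra $\ZZ/2$-torsion contributed by $H_*(\ZZ/2;\ZZ)$, assembles into ${\sf Tor}^{U\gg}_{2n+1}(\ZZ,\ZZ)=(\ZZ/2)^{2^{n+1}}$ and ${\sf Tor}^{U\gg}_{2n+2}(\ZZ,\ZZ)=0$ (the dimension $2^{n+1}$ coming from the $n+1$-fold self-Tor of a rank-$2$ free module). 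In particular $H_3^{\sf T}(\gg)=(\ZZ/2)^4$, and $H_3^{\sf CE}=H_3^{\sf RT}=H_3^{\sf RS}=0$ follows because for $\gg$ flat over $\ZZ/2$ the relative theories agree with $H^{\sf CE}$ (Theorem \ref{th_flat} applied over $\ZZ/2$), which vanishes in degree $3$ as shown above.
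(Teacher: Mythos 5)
Your treatment of $H_3^{\sf CE}(\gg)=0$ matches the paper exactly: since $\gg$ is killed by $2$, the integral Chevalley–Eilenberg complex coincides with the $\ZZ/2$-complex of a free $\ZZ/2$-Lie algebra, whose higher homology vanishes. The reduction $H_3^{\sf S}(\gg)\cong H_{2,1}^{\sf CE}(\gg)$ via Corollary~\ref{cor_H_3S} is also the paper's route. Where the proposal falls short is in the two remaining computations.

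For $H_{2,1}^{\sf CE}(\gg)$, you correctly identify $\Omega\gg\cong\Gamma^2_{\ZZ/2}(\gg)$ and the generators, but you then defer the real content: you say you ``expect to resolve'' the infinitude of the cokernel of $\varphi$ ``by choosing a graded linear functional,'' without producing it. This is precisely where the paper's only nontrivial idea lives, and it is short: there is a natural $\ZZ/2$-linear epimorphism $\varepsilon:\Gamma^2_{\ZZ/2}(A)\to A$ with $\varepsilon(\gamma_2(a))=a$, which kills all products $\gamma_1(a)\gamma_1(b)$ (because $\gamma_2$ is quadratic and $\varepsilon$ is linear). Since $\varphi(\omega_2^2(a)\otimes b)=\omega_1^2([a,b])*\omega_1^2(a)$ is exactly such a product, $\varepsilon$ factors through $H_{2,1}^{\sf CE}(\gg)$, giving a surjection onto $\gg\cong(\ZZ/2)^{\oplus\infty}$; together with $H_{2,1}^{\sf CE}(\gg)$ being a quotient of $\Omega\gg\cong(\ZZ/2)^{\oplus\infty}$, this pins the answer down. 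Your proposed grading/counting argument is harder than it looks — the image of $\varphi$ is not spanned by the displayed elements with $a,b$ running only over \emph{basis} elements, because $\omega_2^n$ is quadratic — so the $\varepsilon$-map is not merely a convenience but the clean way out.

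The $H^{\sf T}_*$ computation contains a genuine error. You claim ``$U\gg$ over $\ZZ$ coincides with $U_{\ZZ/2}\gg$,'' hence equals $\mathbb F_2\langle x,y\rangle$; this is false. One has $U_\ZZ\gg\cong\ZZ\langle x,y\rangle/(2x,2y)$, which still has a copy of $\ZZ$ in filtration degree $0$ (indeed $\ZZ$ must remain a $U_\ZZ\gg$-module for ${\sf Tor}^{U\gg}_*(\ZZ,\ZZ)$ to make sense, which it does not over $\mathbb F_2\langle x,y\rangle$). The PBW property you invoke tells you $GU_\ZZ\gg\cong S_\ZZ\gg$, whose degree-$0$ piece is $\ZZ$, not $\ZZ/2$; it does not collapse $U_\ZZ\gg$ to $U_{\ZZ/2}\gg$. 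The paper avoids any spectral sequence here: it uses the exact sequence $0\to\ZZ\xrightarrow{2}U\to\mathbb F_2\langle x,y\rangle\to 0$ and the freeness of the augmentation ideal of $\mathbb F_2\langle x,y\rangle$ to splice together an explicit $2$-periodic free resolution of $\ZZ$ over $U$, alternating the maps $f(a,b)=xa+yb$ and $g(a)=2\varepsilon(a)$; tensoring with $\ZZ$ makes all $f$'s vanish and all $g$'s become multiplication by $2$, from which the answer $(\ZZ/2)^{2^{n+1}}$ in odd degrees and $0$ in even positive degrees is read off directly. Your change-of-rings sketch, as written, rests on the false identification and is too vague to salvage; if you want to pursue a base-change route you would need to work with the correct $U_\ZZ\gg$ and be explicit about which spectral sequence you are running.
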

\begin{proof}
If we consider $\gg$ as a Lie algebra over $\ZZ/2,$ then it is just the free algebra over the field  $\ZZ/2.$ The Chevalley-Eilenberg complex ${\sf CE}_\bullet(\gg)$ over $\ZZ$ equals to the Chevalley-Eilenberg complex over $\ZZ/2$ (that computes the ordinary homology over a field) and hence, $H_n^{\sf CE}(\gg)=0$ for any $n\geq 2.$ 

Prove that $H^{\sf S}_3(\gg)\cong (\ZZ/2)^{\oplus \infty}.$ By Corollary \ref{cor_H_3S} we obtain 
$$H_3^{\sf S}(\gg)\cong H^{\sf CE}_{2,1}(\gg).$$
Then it is enough to compute $H^{\sf CE}_{2,1}(\gg)$ using Theorem \ref{th_H21}. For any free abelian group $A$ there is a natural isomorphism ${\sf Tor}(A/2,A/2)\cong A/2\otimes A/2.$ It follows that $$ \Omega (A/2) \cong   \Gamma^2_{\ZZ/2}(A/2),\hspace{1cm} \gamma_2(a) \leftrightarrow \omega_2^2(a).$$ Note that there is a natural epimorphism 
$$\varepsilon: \Gamma^2_{\ZZ/2}(A/2) \epi A/2, \hspace{1cm} \gamma_2(a)\mapsto a$$
that vanishes $\gamma_1(a)\gamma_1(b)$ for any $a,b\in A/2.$
This follows that 
$\Omega \gg\cong\Gamma^2_{\ZZ/2}(\gg)$ and the composition of two maps 
$$\Omega \gg \otimes \gg \overset{\varphi}\longrightarrow \Omega \gg \overset{\varepsilon}\longrightarrow \gg.$$
vanishes. Therefore there is an epimorphism 
$H^{\sf CE}_{2,1}(\gg)\epi \gg$
induced by $\varepsilon.$ The Lie algebra $\gg$ is isomorphic to $(\ZZ/2)^{\oplus \infty}$ as an abelian group. Therefore, we have an epimorphism 
$$H^{\sf CE}_{2,1}(\gg)\epi (\ZZ/2)^{\oplus \infty}.$$
On the other hand $H^{\sf CE}_{2,1}(\gg)$ is a quotient of $\Omega \gg \cong \Gamma^2_{\ZZ/2}((\ZZ/2)^{\oplus \infty})\cong (\ZZ/2)^{\oplus \infty},$ and hence $H^{\sf CE}_{2,1}(\gg)\cong (\ZZ/2)^{\oplus \infty}.$ 

Let us compute $H_*^{\sf T}(\gg).$ It is easy to check that $U \gg=\ZZ\langle x,y \rangle/(2x,2y).$ Denote by $I$ the augmentation ideal of $U:=\ZZ\langle x,y \rangle/(2x,2y).$ Note that $I$ coincides with the augmentation ideal of $\mathbb F_2\langle x,y \rangle$ which is free as a right module
$$(\mathbb F_2\langle x,y \rangle)^2\cong I, \hspace{1cm} (a,b)\mapsto xa+yb.$$ 
Using the exact sequence 
$$ 0 \longrightarrow \ZZ \overset{2\cdot }\longrightarrow  U \longrightarrow  \mathbb F_2\langle x,y \rangle \longrightarrow 0$$
we obtain that there is a short exact sequence
$$0 \longrightarrow \ZZ^2 \longrightarrow  U^2 \longrightarrow  I \longrightarrow 0.$$
Combining this with the short exact sequence $0\to I \to U \to \ZZ\to 0,$ we obtain the following free resolution 
$$\dots \to U^8 \to  U^8 \overset{f^4}\to U^4 \overset{g^4}\to U^4 \overset{f^2}\to U^2 \overset{g^2}\to U^2 \overset{f}\to U \to 0,$$
where $f(a,b)=xa+yb$ and $g(a)=2\varepsilon(a).$ Tensoring it by $\ZZ$ over $U$ we obtain the following complex
$$ \dots \to \ZZ^8 \overset{2}\to  \ZZ^8 \overset{0}\to \ZZ^4 \overset{2}\to \ZZ^4 \overset{0}\to \ZZ^2 \overset{2}\to \ZZ^2 \overset{0}\to \ZZ \to 0.$$
It follows that $H_{2n+2}^{\sf T}(\gg)=0$ and $H_{2n+1}^{\sf T}(\gg)=(\ZZ/2)^{2^{n+1}}$ for $n\geq 0.$
\end{proof}

\section{Relative purity}

\subsection{Pure acyclic complexes}

In this section we assume that $R$ is a unital associative ring.

\begin{definition}
A short exact sequence of right $R$-modules
$$E: \hspace{1cm} 0 \longrightarrow M'\longrightarrow  M \longrightarrow M'' \longrightarrow 0 \hspace{1cm}\  $$
is called {\it pure exact} if one of the following equivalent properties hold (see \cite[Th. 4.89]{lam2012lectures}):
\begin{enumerate}
\item $E\otimes_R N$  is short exact  for any left $R$-module $N$;

\item  $E\otimes_R N$ is short exact  for any finitely presented left $R$-module $N$;

\item ${\sf Hom}_R(N,E)$ is short exact for any finitely presented right $R$-module $N;$

\item $E$ is a filtered colimit of split short exact sequences
$E_i.$
\end{enumerate}
A submodule $M' \leq M$ is called {\it pure} if the sequence $M'\mono M\epi M/M'$ is pure exact. 
\end{definition}

\begin{definition}
An acyclic complex $C_\bullet$ of $R$-modules is called {\it pure acyclic} if one of the following equivalent properties hold (see \cite[Prop.2.2]{emmanouil2016pure} and  \cite[Th.18-2.11]{dauns1994modules}): 
\begin{enumerate}
\item $Z_n\mono C_n \epi Z_{n+1}$ is pure exact for any $n,$ where $Z_k={\sf Ker}(C_k\to C_{k-1});$

\item $C_\bullet\otimes_R N$ is acyclic for  any left $R$-module $N$;

\item $C_\bullet\otimes_R N$ is acyclic for any finitely presented left  $R$-module $N;$
\item ${\sf Hom}_R(N,C_\bullet)$ is acyclic for any finitely presented right $R$-module $N$;
\item $C_\bullet$ is a filtered colimit of contractible complexes. 
\end{enumerate}
\end{definition}

\begin{lemma}[pure $3\times 3$-lemma]\label{lemma_3x3} Let $C'_\bullet \mono C_\bullet \epi C''_\bullet$ be a short exact sequence of complexes of right $R$-modules such that 
$C'_n\mono C_n \epi C''_n$ is pure exact sequence for any $n.$ Assume that any two of this three  complexes $C'_\bullet,C_\bullet,C''_\bullet$ are pure acyclic. Then the third one is also pure acyclic.  
\end{lemma}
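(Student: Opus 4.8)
The plan is to detect pure acyclicity through the tensor characterization rather than through the cycle modules. Recall that condition (2) in the definition of a pure acyclic complex says that a complex $D_\bullet$ of right $R$-modules is pure acyclic if and only if $D_\bullet\otimes_R N$ is acyclic for every left $R$-module $N$; note that taking $N=R$ here already forces $D_\bullet$ itself to be acyclic, so there is nothing to check separately about ordinary acyclicity of the third complex.

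First I would fix an arbitrary left $R$-module $N$. Since each degreewise sequence $C'_n\mono C_n\epi C''_n$ is pure exact, condition (1) in the definition of pure exactness gives that $0\to C'_n\otimes_R N\to C_n\otimes_R N\to C''_n\otimes_R N\to 0$ is exact for every $n$. Assembling these over all $n$ (the maps are compatible with the differentials of $C'_\bullet, C_\bullet, C''_\bullet$), we obtain a short exact sequence of complexes $0\to C'_\bullet\otimes_R N\to C_\bullet\otimes_R N\to C''_\bullet\otimes_R N\to 0$.

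Next I would invoke the long exact homology sequence of this short exact sequence of complexes. By hypothesis two of the three complexes $C'_\bullet, C_\bullet, C''_\bullet$ are pure acyclic, so for those two the complexes tensored with $N$ are acyclic. The standard three-term argument then forces the homology of the remaining tensored complex to vanish in every degree: if the two outer ones are acyclic after $\otimes_R N$ then so is the middle one; if the middle one and one outer one are acyclic then the connecting homomorphism forces the other outer one to be acyclic; symmetrically in the remaining case. Hence the third complex, say $D_\bullet$, satisfies that $D_\bullet\otimes_R N$ is acyclic.

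Since $N$ was arbitrary, $D_\bullet$ satisfies condition (2) of the definition of pure acyclicity, so $D_\bullet$ is pure acyclic, which completes the proof. The only point requiring attention is the bookkeeping in the second paragraph: degreewise pure exactness is precisely what is needed to turn the given short exact sequence of complexes into a short exact sequence after applying $-\otimes_R N$, thereby making the long exact homology sequence available; beyond that it is just the usual $3\times 3$-style diagram chase transported through $-\otimes_R N$, and I do not expect a genuine obstacle.
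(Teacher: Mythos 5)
Your proposal is correct and follows essentially the same argument as the paper: tensor the degreewise pure exact sequence with an arbitrary left $R$-module $N$ (pure exactness preserving short exactness termwise), then conclude via the long exact homology sequence and characterization (2) of pure acyclicity. No substantive difference from the paper's proof.
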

\begin{proof}
Since $C'_n\mono C_n \epi C''_n$ is pure exact for any $n,$ then $C'_\bullet \otimes_R N \mono C_\bullet \otimes_R N \epi C''_\bullet\otimes_R N$ is a short exact sequence for any left $R$-module $N.$ The assertion follows from the long homology sequence.
\end{proof}

\subsection{Relative homology.} In this section we denote by $R$ a ring and by $S$ its subring. We will use the framework of  $(R,S)$-relative homological algebra. We follow notation and terminology of Mac Lane \cite[Ch.IX]{maclane2012homology}. 
 
Assume that $M$ is a right $R$-module and $N$ is a left $R$-module. Usually in order to define ${\sf Tor}_*^{(R,S)}(M,N)$ one takes an $S$-split $(R,S)$-projective resolution $P_\bullet\epi M$ and define
$${\sf Tor}_*^{(R,S)}(M,N):=H_*(P_\bullet \otimes_R N).$$ 
We generalize the notion of $S$-split resolution in the following way. 
We say that a resolution $P_\bullet \epi M$ of an $R$-module $M$ is {\it $S$-pure}, if the augmented resolution $P'_\bullet={\sf Cone}(P_\bullet \to M[0])$ is pure acyclic over $S$. The aim of this section is to prove that in order to compute ${\sf Tor}_*^{(R,S)}(M,N)$ one can use not only $S$-split $(R,S)$-projective resolutions but also $S$-pure $(R,S)$-projective resolutions. 

\begin{lemma}\label{lemma_tensor_S-pure} Let $C_\bullet$ be an $S$-pure acyclic complex of right $R$-modules and let $P$ be a left $(R,S)$-projective $R$-module. Then the complex $C_\bullet\otimes_R P$ is acyclic. 
\end{lemma}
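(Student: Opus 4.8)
The plan is to reduce to the case of a relatively free module and then invoke associativity of the tensor product together with the defining property of pure acyclicity. Recall that in Mac Lane's framework an $R$-module $P$ is $(R,S)$-projective precisely when it is a direct summand, as an $R$-module, of a module of the form $R\otimes_S N$ for some left $S$-module $N$; equivalently, the canonical multiplication map $R\otimes_S P\to P$ admits an $R$-linear section. So the content of the lemma is carried by the relatively free case.

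First I would treat the case $P=R\otimes_S N$. Here the standard associativity isomorphism gives $C_\bullet\otimes_R(R\otimes_S N)\cong C_\bullet\otimes_S N$ as complexes of abelian groups, where on the right $C_\bullet$ is regarded as a complex of right $S$-modules by restriction of scalars along $S\hookrightarrow R$. By hypothesis $C_\bullet$ is pure acyclic over $S$, so by the tensor characterization of pure acyclic complexes recalled above, $C_\bullet\otimes_S N$ is acyclic for every left $S$-module $N$. Hence $C_\bullet\otimes_R(R\otimes_S N)$ is acyclic.

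For a general $(R,S)$-projective $P$, write $P$ as a direct summand of some $R\otimes_S N$. The functor $C_\bullet\otimes_R-$ is additive, so $C_\bullet\otimes_R P$ is a direct summand of $C_\bullet\otimes_R(R\otimes_S N)$, which is acyclic by the previous step. Since homology commutes with finite direct sums, a direct summand of an acyclic complex is acyclic, so $C_\bullet\otimes_R P$ is acyclic.

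There is essentially no hard step here; the only points requiring care are the bookkeeping of left/right module structures in the associativity isomorphism and the observation that the restriction of $C_\bullet$ to a complex of $S$-modules remains pure acyclic, which is immediate because "$S$-pure acyclic" was defined to mean exactly this. If one prefers to avoid the direct-summand description, one can instead use a section $\sigma\colon P\to R\otimes_S P$ of the multiplication map to exhibit $C_\bullet\otimes_R P$ as a retract of $C_\bullet\otimes_R(R\otimes_S P)\cong C_\bullet\otimes_S P$, reaching the same conclusion.
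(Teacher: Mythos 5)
Your proposal is correct and follows essentially the same route as the paper: reduce to the relatively free case $P=R\otimes_S N$ via the direct-summand description of $(R,S)$-projectives, use the isomorphism $C_\bullet\otimes_R(R\otimes_S N)\cong C_\bullet\otimes_S N$, and conclude from the tensor characterization of pure acyclicity, with acyclicity passing to direct summands. The extra remarks on left/right bookkeeping and the retraction alternative are fine but add nothing beyond the paper's argument.
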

\begin{proof}
Any $(R,S)$-projective left $R$-module is a direct summand of a module of the form $R\otimes_S A,$ where $A$ is a left $S$-module. Then it is enough to prove the statement for $P=R\otimes_S A.$ 
The assertion follows from the isomorphism $C_\bullet\otimes_R (R\otimes_S A )\cong C_\bullet \otimes_S A$ and the fact that $C_\bullet$ is $S$-pure acyclic.  
\end{proof}

\begin{lemma}\label{lemma_balance} Let $M$ and $N$ be right and left $R$-modules respectively and $P_\bullet\to M$ and $Q_\bullet \to N$ be their $S$-pure $(R,S)$-resolutions. Then the maps $P_\bullet\to M$ and $Q_\bullet\to N$ induce isomorphisms
$$H_*(P_\bullet\otimes_R N ) \cong H_*(P_\bullet \otimes_R Q_\bullet ) \cong H_*(M \otimes_R Q_\bullet).$$ 
\end{lemma}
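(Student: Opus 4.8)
The plan is to prove the "balance" statement by the standard double-complex argument, using the purity results established just above as the engine that makes everything work without an $S$-split hypothesis. Set $D_{i,j} = P_i \otimes_R Q_j$ and form the associated double complex; I will compare the homology of its total complex with each of the two edge homologies via the two spectral sequences of a first-quadrant double complex.

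First I would fix the right column comparison. Augment in the $Q$-direction: for each fixed $i$, the row $P_i \otimes_R Q_\bullet \to P_i \otimes_R N$ should be acyclic. Here is where Lemma \ref{lemma_tensor_S-pure} enters: $Q_\bullet \to N$ is an $S$-pure $(R,S)$-resolution, so the augmented complex $Q'_\bullet$ is $S$-pure acyclic; since $P_i$ is $(R,S)$-projective, tensoring the $S$-pure acyclic complex $Q'_\bullet$ (a complex of left $R$-modules) with $P_i$ on the left — after rewriting $P_i \otimes_R Q'_\bullet$ appropriately as in the proof of Lemma \ref{lemma_tensor_S-pure}, using that $(R,S)$-projectives are summands of $R \otimes_S A$ — yields an acyclic complex. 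Hence the horizontal homology of the augmented double complex vanishes off the augmentation column, and a standard comparison (the augmented total complex is acyclic) gives $H_*(\mathrm{Tot}(D)) \cong H_*(P_\bullet \otimes_R N)$. Symmetrically, augmenting in the $P$-direction and using that each $Q_j$ is $(R,S)$-projective together with the $S$-pure acyclicity of $P'_\bullet$ gives $H_*(\mathrm{Tot}(D)) \cong H_*(M \otimes_R Q_\bullet)$. Composing the two isomorphisms yields the claim, and one checks that the composite is the map induced by $P_\bullet \to M$ and $Q_\bullet \to N$ as stated.

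The one genuinely delicate point — the main obstacle — is the bookkeeping needed to apply Lemma \ref{lemma_tensor_S-pure} in the form I need: that lemma is stated for a single $(R,S)$-projective module tensored with an $S$-pure acyclic complex, but here I must tensor an entire resolution $P_\bullet$ (which is not itself $(R,S)$-projective as a complex, only levelwise) against an $S$-pure acyclic complex $Q'_\bullet$ and conclude the total complex of the result is acyclic. This is handled by the acyclic-assembly/double-complex lemma for first-quadrant bicomplexes with exact rows, but one must be careful that the relevant rows are bounded below so the convergence is automatic; since both resolutions are non-negatively graded this is fine. A secondary subtlety is that $Q'_\bullet$ is a complex of left $R$-modules while purity over $S$ is a statement about the underlying $S$-module structure — the proof of Lemma \ref{lemma_tensor_S-pure} already navigates exactly this, so I would simply cite it levelwise and then invoke the assembly lemma.

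Finally, I would record the naturality: the maps $P_\bullet \otimes_R N \to \mathrm{Tot}(D) \leftarrow M \otimes_R Q_\bullet$ are the canonical augmentation-induced chain maps, and the displayed isomorphisms are their induced maps on homology, so the resulting identification $H_*(P_\bullet \otimes_R N) \cong H_*(M \otimes_R Q_\bullet)$ is compatible with morphisms of modules and of resolutions. This is the statement one needs to conclude, in the next step of the paper, that ${\sf Tor}_*^{(R,S)}$ can be computed from any $S$-pure $(R,S)$-projective resolution in either variable — the independence of the choice of resolution then follows by the usual comparison-theorem argument applied to $S$-pure resolutions, for which the same purity input guarantees the existence of the comparison chain maps up to homotopy.
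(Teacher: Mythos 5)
Your proof is correct and follows essentially the same route as the paper: form the double complex $D_{i,j}=P_i\otimes_R Q_j$, use Lemma~\ref{lemma_tensor_S-pure} (and its mirror) to show that the rows/columns of the augmented double complex are acyclic because the $(R,S)$-projective components absorb the $S$-pure acyclicity of $P'_\bullet$ and $Q'_\bullet$, then compare total homology with both edges. The only cosmetic slip is in your closing remark: the canonical chain maps run $P_\bullet\otimes_R N \leftarrow \mathrm{Tot}(D)\to M\otimes_R Q_\bullet$ (induced by $Q_\bullet\to N$ and $P_\bullet\to M$ respectively), not the other way around; this does not affect the argument.
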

\begin{proof}
Consider the double complex $D_{n,m}=P_n\otimes_RQ_m.$ Then $P_\bullet \otimes_R  Q_\bullet$ is the totalization of this complex. By Lemma \ref{lemma_tensor_S-pure} the complex $P'_\bullet \otimes_R Q_m$ is acyclic, where $P'_\bullet$ is the augmented resolution. It follows that $H_*(P_\bullet \otimes_R Q_m)=M\otimes_R Q_m [0]$ for any $m.$  Therefore the horizontal homology of $D_{\bullet\bullet}$ are concentrated in zero row and equal to $M\otimes Q_\bullet.$ Similarly the vertical homology of $D_{\bullet \bullet}$ are concentrated in zero column and equal to $P_\bullet \otimes_R N.$ Therefore the two spectral sequences of the double complex imply the assertion. 
\end{proof}

\begin{theorem}\label{theorem_pure_rel_gen}
Let $M$ be right $R$-module and  $P_\bullet\to M$ be its $S$-pure $(R,S)$-projective resolution. Then for any left $R$-module there is a natural isomorphism
$${\sf Tor}^{(R,S)}_*(M,N)\cong H_*(P_\bullet\otimes_R N).$$ 
\end{theorem}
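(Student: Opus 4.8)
The plan is to reduce the statement to the balancing lemma just proved. First I would observe that the relative Tor functor ${\sf Tor}^{(R,S)}_*(M,N)$ is, by its very definition, computable from \emph{some} $S$-split $(R,S)$-projective resolution $Q_\bullet\to M$, namely ${\sf Tor}^{(R,S)}_*(M,N)=H_*(Q_\bullet\otimes_R N)$. Since an $S$-split short exact sequence is in particular pure exact over $S$ (a split sequence is trivially pure, and purity is preserved under the relevant colimits; more directly, $S$-split implies $S$-pure acyclic for the augmented complex), such a $Q_\bullet$ is itself an $S$-pure $(R,S)$-projective resolution of $M$. So the content of the theorem is exactly that the value $H_*(P_\bullet\otimes_R N)$ does not depend on which $S$-pure $(R,S)$-projective resolution we pick — it agrees with the one coming from an $S$-split resolution.

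Next I would invoke Lemma \ref{lemma_balance}. Let $P_\bullet\to M$ be the given $S$-pure $(R,S)$-projective resolution, and let $Q_\bullet\to N$ be \emph{any} $S$-pure $(R,S)$-projective resolution of $N$ as a left $R$-module (for instance the canonical bar-type $(R,S)$-relative resolution, whose augmented complex is even $S$-split, hence $S$-pure acyclic). Lemma \ref{lemma_balance} then gives natural isomorphisms
$$H_*(P_\bullet\otimes_R N)\;\cong\; H_*(P_\bullet\otimes_R Q_\bullet)\;\cong\; H_*(M\otimes_R Q_\bullet).$$
Now run the same argument with the specific $S$-split resolution $\bar P_\bullet\to M$ that is used in the standard definition of ${\sf Tor}^{(R,S)}_*$; since $\bar P_\bullet$ is also $S$-pure, Lemma \ref{lemma_balance} applied to $\bar P_\bullet$ and the same $Q_\bullet$ yields $H_*(\bar P_\bullet\otimes_R N)\cong H_*(M\otimes_R Q_\bullet)$. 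Chaining these isomorphisms,
$$H_*(P_\bullet\otimes_R N)\;\cong\; H_*(M\otimes_R Q_\bullet)\;\cong\; H_*(\bar P_\bullet\otimes_R N)\;=\;{\sf Tor}^{(R,S)}_*(M,N),$$
and all the isomorphisms are natural in $M$ and $N$ by the naturality built into Lemma \ref{lemma_balance} and into the comparison of resolutions.

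The main obstacle, and the only point requiring care, is the very first reduction: checking that an $S$-split augmented resolution is $S$-pure acyclic in the sense defined here, i.e.\ that $P'_\bullet={\sf Cone}(P_\bullet\to M[0])$ is pure acyclic over $S$. This is where one uses that a contractible complex of $S$-modules is pure acyclic (it is a filtered colimit of itself, a contractible complex, matching characterization (5) of pure acyclicity), together with the fact that an $S$-split exact sequence splits into short split exact pieces $Z_n\mono P'_n\epi Z_{n+1}$, each of which is a split — hence pure — short exact sequence of $S$-modules, matching characterization (1). A secondary point is ensuring the existence of an $S$-pure $(R,S)$-projective resolution of $N$ with which to compare; but the standard relative bar resolution $\dots\to (R\otimes_S R)\otimes_S N\to R\otimes_S N\to N$ is $(R,S)$-projective with $S$-split augmentation, so it does the job. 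Everything else is a formal diagram chase using Lemma \ref{lemma_balance} and naturality of the comparison of projective resolutions in relative homological algebra.
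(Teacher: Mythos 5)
Your proof is correct and follows essentially the same route as the paper: resolve $N$ by an $S$-split (hence $S$-pure) $(R,S)$-projective resolution $Q_\bullet$ and invoke Lemma \ref{lemma_balance}. The only cosmetic difference is that the paper identifies ${\sf Tor}^{(R,S)}_*(M,N)$ with $H_*(M\otimes_R Q_\bullet)$ by citing the standard balance of relative Tor, whereas you obtain that identification from a second application of Lemma \ref{lemma_balance} to the defining $S$-split resolution $\bar P_\bullet$ of $M$, which makes the argument marginally more self-contained.
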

\begin{proof}
Consider an $S$-split  $(R,S)$-projective resolution  $Q_\bullet\to N.$ Then ${\sf Tor}^{(R,S)}_\bullet(M,N)$ is isomorphic to $ H_*(M\otimes_R Q_\bullet).$ Since $P_\bullet$ is $S$-split $(R,S)$-projective resolution, it is also $S$-pure $(R,S)$-projective resolution. Then 
the assertion follows from Lemma \ref{lemma_balance}.
\end{proof}

The following statement seems to be well known but we add it for completeness.

\begin{proposition} \label{prop_tor_rel-ord}
Assume that $R$ is flat as a right $S$-module and $M$ is a right $R$-module which is flat over $S$. Then 
$${\sf Tor}^{(R,S)}_*(M,N)\cong {\sf Tor}^{R}_*(M,N).$$
\end{proposition}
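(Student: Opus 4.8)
The plan is to exhibit a single resolution that simultaneously computes both the relative and the absolute Tor, and to do so I would build an $(R,S)$-projective resolution of $M$ out of a relative bar-type construction. Concretely, take the standard $S$-split $(R,S)$-projective resolution $P_\bullet \to M$ whose terms are $P_n = R \otimes_S R \otimes_S \dots \otimes_S R \otimes_S M$ ($n{+}1$ copies of $R$, then $M$), with the usual bar differential; this is $S$-split by construction, hence in particular an $S$-pure $(R,S)$-projective resolution, so by Theorem \ref{theorem_pure_rel_gen} it computes ${\sf Tor}^{(R,S)}_*(M,N)$ as $H_*(P_\bullet \otimes_R N)$. The point of the hypotheses is that this very same complex is also a flat (indeed projective over $R$ is too strong, but flat suffices) resolution of $M$ over $R$ in the absolute sense.

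The key steps, in order: first, observe that each $P_n = R^{\otimes_S (n+1)} \otimes_S M$ is flat as a right $R$-module. For $n \geq 0$ this module is $R \otimes_S (R^{\otimes_S n} \otimes_S M)$, and since $R$ is flat as a right $S$-module and $M$ is flat over $S$, an easy induction using associativity of tensor products and flatness transitivity shows $R^{\otimes_S n}\otimes_S M$ is a flat left $S$-module, whence $R \otimes_S (R^{\otimes_S n}\otimes_S M)$ is a flat right $R$-module (induction from a flat $S$-module gives a flat $R$-module). Second, note the augmented complex $P_\bullet \to M$ is $S$-split, hence exact, hence $P_\bullet$ is a flat resolution of $M$ over $R$ in the ordinary sense. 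Third, conclude that $H_*(P_\bullet \otimes_R N) \cong {\sf Tor}^R_*(M,N)$ since ordinary Tor may be computed from any flat resolution. Combining the two computations of $H_*(P_\bullet \otimes_R N)$ yields the claimed natural isomorphism, and naturality in $N$ is automatic since both sides arise as homology of the functor $P_\bullet \otimes_R (-)$.

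The main obstacle, such as it is, is the flatness bookkeeping in the first step: one must be careful that "flat over $S$ on one side" plus "$R$ flat as a right $S$-module" really does propagate through the iterated tensor product to give flatness of each $P_n$ as a right $R$-module, keeping track of which side of each bimodule is being used. The cleanest way is the standard fact that if $A$ is a flat right $S$-module and $B$ a flat left $S$-module then $R \otimes_S B$ is a flat left $R$-module whenever $R$ is flat as a right $S$-module (because $(R\otimes_S B)\otimes_R - \cong R \otimes_S (B \otimes_R -)$... ), or more simply: $-\otimes_R (R\otimes_S B) \cong (- \otimes_R R)\otimes_S B = -\otimes_S B$, which is exact by flatness of $B$ — wait, one wants flatness as an $R$-module on the correct side, so I would phrase it as: for a right $R$-module $X$, $X \otimes_R (R \otimes_S B) \cong X \otimes_S B$, exact in $X$ iff $B$ is flat as a left $S$-module, giving flatness of $R\otimes_S B$ as a left $R$-module; dualizing the sides handles $P_n$. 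Everything else is the routine observation that one flexible resolution computes both theories.
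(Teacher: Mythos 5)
Your proof is correct in substance and follows the same master strategy as the paper: produce one $S$-split, $(R,S)$-projective resolution of $M$ whose terms are flat over $R$, so that the single complex $P_\bullet\otimes_R N$ computes both the relative and the ordinary Tor. The difference is in how the resolution is built. You take the explicit relative bar resolution, whose terms are iterated tensor products over $S$, and prove their $R$-flatness directly by induction on tensor factors using only the two hypotheses; this avoids any appeal to the fact that in a short exact sequence with flat middle and flat quotient the kernel is flat. The paper instead builds the resolution by iterated syzygies, $P_i=M_i\otimes_S R$ with $M_{i+1}=\ker(P_i\to M_i)$, which keeps the terms small and avoids bar-complex bookkeeping, but does need the cited flatness-of-kernels fact (\cite[Cor.\ 4.86]{lam2012lectures}) to propagate $S$-flatness through the kernels. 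One bookkeeping point in your write-up: since $M$ is a \emph{right} $R$-module and $S$ need not be central, the relative bar resolution should be $P_n=M\otimes_S R^{\otimes_S(n+1)}$ with $R$ acting on the last factor, not $R^{\otimes_S(n+1)}\otimes_S M$; with that orientation your flatness argument works verbatim ($A\otimes_S R$ is $S$-flat when $A$ and $R$ are right $S$-flat, and $(A\otimes_S R)\otimes_R N\cong A\otimes_S N$ shows $A\otimes_S R$ is $R$-flat), as you essentially observe when you "dualize the sides." Also, invoking Theorem \ref{theorem_pure_rel_gen} is harmless but unnecessary: the bar resolution is already $S$-split, so it computes ${\sf Tor}^{(R,S)}$ by definition.
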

\begin{proof} Note that for any flat right $S$-module $A$ the $R$-module $A\otimes_S R$ is also flat. On the other hand, since $R$ is flat over $S,$ any flat right $R$-module is flat over $S$ (indeed any flat $R$-module is a filtered colimit of free $R$-modules which are flat over $S$). It follows that, if $A$ is a flat $S$-module, then $A\otimes_S R$ is also a flat $S$-module. Therefore  $M\otimes_SR^{\otimes_S n}$ is a flat $R$-module for any $n\geq 1$.

Following the book of Mac Lane we denote by $\beta(R)$ the bar-complex, whose components are $\beta_n(R)=R^{\otimes_S (n+2)}$ \cite[Ch.IX, Th.8.1]{maclane2012homology}.  Then ${\sf Tor}^{(R,S)}_*(M,N)=H_*(M\otimes_R \beta(R)\otimes_R N).$ On the other hand $M\otimes_R \beta(R) $ is a flat resolution of $M$ because its components $M\otimes_R R^{\otimes_S (n+2)} \cong M\otimes_S R^{\otimes_S (n+1)}$ are flat $R$-modules and it is a resolution of $M$  (see \cite[Ch.IX, Cor.8.2]{maclane2012homology}). Therefore ${\sf Tor}^R_*(M,N)\cong (M\otimes_R \beta(R))\otimes_R N\cong {\sf Tor}^{(R,S)}_*(M,N).$
\end{proof}

\section{Koszul complexes}\label{section_Koszul}

\subsection{Preliminaries}
Let $\kk$ be a commutative ring and $\mathcal A=\bigoplus_{n\in  \ZZ} \mathcal A_n $ be a graded algebra over $\kk$. A {\it differential} on $\mathcal A$ is a collection of $\kk$-homomorphisms  $(\partial : \mathcal A_n\to\mathcal A_{n-1})_{n\in \ZZ}$ satisfying $\partial \partial=0$ and $\partial(\alpha\beta)=\partial (\alpha)\beta + (-1)^n\alpha\partial (\beta) $ for any $\alpha\in\mathcal  A_n$ and $\beta\in \mathcal A_m.$ Then a \emph{dg-algebra} is a graded algebra together with a differential.

If $\kk$ is a commutative ring, $E$ is an $\kk$-module and $f:E\to \kk$ is a $\kk$-homomorphism, then the associated Koszul chain complex ${\sf Kos}_\bullet(f)$ is a dg-algebra whose underlying graded algebra is  the exterior algebra $\Lambda_\kk E$ and the differential is given by the formula
$$ \partial(e_1\wedge \dots \wedge e_n)=\sum_{i=1}^n (-1)^{i+1} f(e_i)e_1\wedge \dots \wedge \hat e_i \wedge \dots \wedge e_n.$$
It is easy to check that this formula defines a differential.

\begin{lemma}\label{lemma_Kos(f)} The differential of ${\sf Kos}_\bullet(f)$ is the unique differential on the graded algebra $\Lambda_\kk E$  such that $\partial(e)=f(e)$ for any $e\in E.$
\end{lemma}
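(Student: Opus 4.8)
\textbf{Proof plan for Lemma \ref{lemma_Kos(f)}.}

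The plan is to show that any graded-algebra differential $\partial$ on $\Lambda_\kk E$ is completely determined by its restriction to the generators $E = \Lambda^1_\kk E$, and then to verify that the formula for ${\sf Kos}_\bullet(f)$ is precisely the one forced by the condition $\partial(e) = f(e)$.

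First I would observe that existence is essentially already in hand: the statement just above the lemma asserts (``It is easy to check that this formula defines a differential'') that the Koszul formula does define a graded differential on $\Lambda_\kk E$, and evaluating it on a single element $e \in E = \Lambda^1_\kk E$ gives $\partial(e) = f(e)$. So the only real content is uniqueness. For this, let $\partial$ and $\partial'$ be two differentials on the graded algebra $\Lambda_\kk E$ with $\partial(e) = f(e) = \partial'(e)$ for all $e \in E$, and set $D = \partial - \partial'$. Since both $\partial$ and $\partial'$ satisfy the same graded Leibniz rule $\partial(\alpha\beta) = \partial(\alpha)\beta + (-1)^n \alpha \partial(\beta)$ for $\alpha \in \Lambda^n_\kk E$, their difference $D$ also satisfies this graded Leibniz identity (the sign in the Leibniz rule depends only on the degree $n$, which is the same for $\partial$ and $\partial'$, so the identity is additive in the differential). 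Moreover $D$ vanishes on $\Lambda^0_\kk E = \kk$ (any graded differential does, since $\partial(1) = \partial(1 \cdot 1) = 2\partial(1)$ forces $\partial(1) = 0$, and $\kk$-linearity extends this to all of $\kk$) and $D$ vanishes on $\Lambda^1_\kk E = E$ by hypothesis.

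The key step is then an induction on $n$ showing $D$ vanishes on $\Lambda^n_\kk E$ for all $n \geq 0$. The base cases $n = 0, 1$ are done. For the inductive step, note that $\Lambda^n_\kk E$ is spanned by products $e \wedge \omega$ with $e \in E$ and $\omega \in \Lambda^{n-1}_\kk E$; applying the graded Leibniz rule for $D$,
\[
D(e \wedge \omega) = D(e) \wedge \omega + (-1)^1 e \wedge D(\omega) = 0 + 0 = 0,
\]
using $D(e) = 0$ and, by the inductive hypothesis, $D(\omega) = 0$. Since such elements span $\Lambda^n_\kk E$ and $D$ is $\kk$-linear, $D = 0$ on $\Lambda^n_\kk E$. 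Hence $\partial = \partial'$, proving uniqueness.

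I do not anticipate a genuine obstacle here; the one point that deserves a sentence of care is the claim that the graded Leibniz identity is preserved under taking the difference of two differentials. This is where it matters that the sign $(-1)^n$ in the Leibniz rule is indexed by the homological degree of the first factor and not by anything depending on $\partial$ itself — so that $\partial(\alpha\beta) - \partial'(\alpha\beta)$ telescopes into $D(\alpha)\beta + (-1)^n \alpha D(\beta)$ without cross terms. Everything else is a routine spanning-set-plus-induction argument, and the existence half is quoted from the discussion preceding the lemma.
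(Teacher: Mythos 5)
Your argument is correct and is exactly the standard generators-plus-Leibniz induction that the paper dismisses with the single word ``Obvious'': uniqueness because a graded derivation vanishing on $\Lambda^0_\kk E$ and $E$ vanishes on the spanning wedges $e\wedge\omega$ by induction, and existence quoted from the formula preceding the lemma. No gaps; your one point of care (that the difference of two differentials still satisfies the graded Leibniz rule since the sign depends only on the degree) is handled correctly.
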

\begin{proof}
Obvious.
\end{proof}

\subsection{Koszul complex of a module}
 In this subsection we assume that $\kk$ is a fixed commutative ring and all modules are modules over $\kk$ and set $\otimes=\otimes_\kk.$

Let $M$ be a $\kk$-module. Consider the symmetric algebra of this module $SM$,  an $SM$-module $E:=SM\otimes M$ and $f_M:SM\otimes M\to SM$  given by multiplication $f_M(\alpha\otimes a)=\alpha a.$ Since the extension of scalars sends exterior powers to exterior powers, we have  an isomorphism
$$ \Lambda_{SM}^n (SM\otimes M) \cong SM \otimes \Lambda^nM. $$
Therefore we obtain a dg-algebra  ${\sf Kos}_\bullet(M)\cong{\sf Kos}_\bullet(f_M)$ over $SM$ such that  
$${\sf Kos}_n(M)=SM\otimes \Lambda^nM$$
with the differential
$$\partial(\alpha\otimes a_1\wedge \dots \wedge a_n )=\sum_{i=1}^n (-1)^{i+1} \alpha a_i \otimes a_1\wedge \dots \wedge  \hat a_i \wedge \dots \wedge a_n.$$

$${\sf Kos}_\bullet(M):  \hspace{1cm} \dots \to  SM\otimes \Lambda^2 M\to  SM\otimes M \to SM\otimes \kk \to 0$$
(see {\cite[4.3.1]{illusie2006complexe}}).

\begin{lemma}\label{lemma_Kos(A)} Consider the algebra $SM\otimes \Lambda M$ as a graded algebra with the grading given by  $$(SM\otimes \Lambda M)_n=SM\otimes \Lambda^n M.$$ Then the differential of ${\sf Kos}_\bullet(M)$ is the unique differential  on the graded algebra $SM\otimes \Lambda M$ such that $\partial(1\otimes a)=a\otimes 1$ for any $a\in M.$
\end{lemma}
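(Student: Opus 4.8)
The plan is to imitate the proof of Lemma~\ref{lemma_Kos(f)}: reduce the statement to a uniqueness claim about differentials on a graded algebra that is generated in low degrees. First I would observe that, as a graded algebra, $SM\otimes\Lambda M$ is generated by its components in degrees $0$ and $1$, namely by $SM\otimes\kk=SM$ (sitting in degree $0$) and by $1\otimes M\subseteq SM\otimes\Lambda^1 M$ (sitting in degree $1$); indeed every element $\alpha\otimes a_1\wedge\dots\wedge a_n$ is the product $(\alpha\otimes 1)(1\otimes a_1)\cdots(1\otimes a_n)$. Consequently any $\kk$-linear derivation (in the graded sense, i.e.\ satisfying the signed Leibniz rule) is determined by its values on these generators.

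Next I would pin down those values for the two candidate differentials. A degree $-1$ derivation must kill $SM$ for degree reasons, so on the degree-$0$ part there is no choice. On the degree-$1$ generators $1\otimes a$ the claimed differential $\partial$ of ${\sf Kos}_\bullet(M)$ sends $1\otimes a\mapsto a\otimes 1$, by the explicit formula recalled just before the lemma (the $n=1$ case of the displayed differential). So if $\partial'$ is any differential on $SM\otimes\Lambda M$ with $\partial'(1\otimes a)=a\otimes 1$ for all $a\in M$, then $\partial$ and $\partial'$ agree on a generating set of the graded algebra, and by the Leibniz rule they agree everywhere; hence $\partial'=\partial$. This gives uniqueness.

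For existence one needs to know that ${\sf Kos}_\bullet(M)$ is genuinely a dg-algebra, i.e.\ that its differential really satisfies $\partial^2=0$ and the signed Leibniz rule; but this was already asserted in the construction preceding the lemma (it is the special case ${\sf Kos}_\bullet(f_M)$ of the Koszul complex ${\sf Kos}_\bullet(f)$, whose differential is a derivation by Lemma~\ref{lemma_Kos(f)}), so I may simply invoke it. The only mild subtlety — and the one point I would be careful about — is making sure the ``derivations are determined by their values on algebra generators'' step is applied to \emph{derivations}, not merely $\kk$-linear maps: a degree $-1$ graded derivation vanishes on $1$ and, once its values on a generating set are fixed, its values on an arbitrary product are forced by iterating the Leibniz rule, with signs bookkept by the degrees of the factors. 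Since all the degree-$1$ generators $1\otimes a$ commute up to the Koszul sign and the derivation is additive, no consistency obstruction arises beyond what is already guaranteed by ${\sf Kos}_\bullet(M)$ being a dg-algebra. This is genuinely the ``obvious'' argument, so I expect the write-up to be a two-line application of Lemma~\ref{lemma_Kos(f)} together with the remark that $SM\otimes\Lambda M$ is generated in degrees $\le 1$.
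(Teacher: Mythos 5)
Your proof is correct and takes essentially the same approach as the paper: both observe that any degree $-1$ differential must vanish on the degree-$0$ part $SM\otimes\kk$, that its value on the degree-$1$ generators $1\otimes a$ is prescribed, and that the Leibniz rule then forces its value everywhere. The paper packages the last step as an appeal to Lemma~\ref{lemma_Kos(f)} (after using Leibniz to show $\partial$ restricted to $SM\otimes M$ must equal $f_M$), while you unroll that lemma's ``generated in degree one'' argument directly, but the reasoning is identical.
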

\begin{proof}
Since $(SM\otimes \Lambda M)_0=SM\otimes \kk$ we obtain $\partial (\alpha \otimes 1 )=0$ for any $\alpha\in SM$ and any differential $\partial.$
Therefore, the equation $\partial(1\otimes a)=a\otimes 1$ implies $\partial(\alpha\otimes a) =\alpha a\otimes 1=f_A(\alpha\otimes  a)\otimes 1.$  Then the assertion follows from Lemma \ref{lemma_Kos(f)}. 
\end{proof}

\begin{lemma}\label{lemma_direct_summ_Kos} For any $\kk$-modules $M,N$ there is a natural isomorphism of dg-algebras
$${\sf Kos}_\bullet(M\oplus N)\cong {\sf Kos}_\bullet(M)\otimes {\sf Kos}_\bullet(N).$$
\end{lemma}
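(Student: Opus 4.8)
I begin from the uniqueness clause of Lemma~\ref{lemma_Kos(A)}, which reduces the whole statement to a single check on the algebra generator $1\otimes a$. First I would construct the underlying isomorphism of graded $\kk$-algebras. Recall the canonical natural isomorphisms of graded $\kk$-algebras $S(M\oplus N)\cong SM\otimes SN$ and $\Lambda(M\oplus N)\cong \Lambda M\otimes \Lambda N$, the latter with the Koszul sign built into the multiplication on the tensor product. Tensoring these and transposing the inner pair of tensor factors gives a natural $\kk$-linear isomorphism
$$\theta: S(M\oplus N)\otimes \Lambda(M\oplus N)\longrightarrow (SM\otimes \Lambda M)\otimes (SN\otimes \Lambda N),$$
whose target is, by definition, the underlying graded algebra of ${\sf Kos}_\bullet(M)\otimes {\sf Kos}_\bullet(N)$. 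The only point that needs care is multiplicativity of $\theta$, and this is exactly where the shape of the Koszul grading helps: in ${\sf Kos}_\bullet(M)$ the symmetric-algebra tensor factor lies entirely in degree $0$, so transposing $SN$ past $\Lambda M$ introduces no sign. Writing a homogeneous element of the source as $(\alpha\otimes\beta)\otimes(\xi\otimes\eta)$ with $\xi\in\Lambda^p M$ and $\eta\in\Lambda^q N$, a direct comparison shows that the two products of two such elements pick up one and the same Koszul sign $(-1)^{qp'}$, so $\theta$ is an isomorphism of graded $\kk$-algebras; naturality in $M$ and $N$ is evident.

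It remains to identify the differentials. Equip ${\sf Kos}_\bullet(M)\otimes {\sf Kos}_\bullet(N)$ with the usual differential $D(x\otimes y)=\partial x\otimes y+(-1)^{|x|}x\otimes \partial y$ of a tensor product of dg-algebras. Since $D$ satisfies the graded Leibniz rule and $D^2=0$, the conjugate $\theta^{-1}D\theta$ is again a differential on the graded algebra $S(M\oplus N)\otimes\Lambda(M\oplus N)$. By Lemma~\ref{lemma_Kos(A)} the Koszul differential on ${\sf Kos}_\bullet(M\oplus N)$ is the \emph{unique} differential on that graded algebra with $\partial(1\otimes a)=a\otimes 1$ for all $a\in M\oplus N$, so it is enough to check that $\theta^{-1}D\theta$ has this property. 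For $a=(a_M,a_N)$ the isomorphism $\theta$ carries $1\otimes a$ to $(1\otimes a_M)\otimes(1\otimes 1)+(1\otimes 1)\otimes(1\otimes a_N)$; applying $D$ and using $\partial(1\otimes a_M)=a_M\otimes 1$, $\partial(1\otimes a_N)=a_N\otimes 1$ (Lemma~\ref{lemma_Kos(A)}) together with the vanishing of $\partial$ in degree $0$, one obtains $(a_M\otimes 1)\otimes(1\otimes 1)+(1\otimes 1)\otimes(a_N\otimes 1)$, which $\theta^{-1}$ sends back to $a\otimes 1$. Hence $\theta$ intertwines the differentials and is the desired natural isomorphism of dg-algebras.

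The one genuinely fiddly step is the sign bookkeeping in the first paragraph---namely, verifying that the reshuffle of the four tensor factors respects the signed multiplication. I expect this to be the main obstacle, although it is a mild one precisely because the symmetric factors are concentrated in Koszul-degree $0$; everything else is formal, driven by the uniqueness statement of Lemma~\ref{lemma_Kos(A)}.
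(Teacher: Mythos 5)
Your proof is correct and follows essentially the same route as the paper: both build the graded-algebra isomorphism from the canonical isomorphisms $S(M\oplus N)\cong SM\otimes SN$ and $\Lambda(M\oplus N)\cong \Lambda M\otimes \Lambda N$, and then invoke the uniqueness clause of Lemma~\ref{lemma_Kos(A)} by checking the differentials agree on the generators $1\otimes(a,b)$. Your extra care with the Koszul signs and with verifying that the conjugated tensor-product differential is indeed a differential simply makes explicit what the paper leaves implicit.
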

\begin{proof}
Since we have isomorphisms of graded rings $SM\otimes SN \cong S(M\oplus N)$ and $ \Lambda M\otimes \Lambda N\cong \Lambda(M\oplus N)$ we obtain an isomorphism of graded rings $S(M\oplus N) \otimes \Lambda(M\oplus N)\cong (SM\otimes \Lambda M)\otimes (SN \otimes \Lambda N).$ Then we only need to check that this isomorphism is compatible with differentials. On the other hand for any $a\in A,b\in B$ the element $1\otimes (a,b)\in {\sf Kos}_\bullet(M\oplus N)$ corresponds to the element $(1\otimes a)\otimes 1 +1\otimes (1\otimes b)$ of ${\sf Kos}_\bullet(M)\otimes {\sf Kos}_\bullet(N).$ Then $\partial((1\otimes a)\otimes 1 +1\otimes (1\otimes b))=a\otimes 1+ 1\otimes b$ which corresponds to $(a,b)\otimes 1$ in ${\sf Kos}_\bullet(M\oplus N).$ Therefore the assertion follows from Lemma \ref{lemma_Kos(A)}.
\end{proof}

\begin{lemma}\label{lemma_Kos_direct_lim} The functor from the category of modules to the category of complexes  
$${\sf Kos}_\bullet: {\sf Mod}\longrightarrow {\sf Com}, \hspace{1cm} M\mapsto {\sf Kos}_\bullet(M)$$
commutes with filtered colimits.
\end{lemma}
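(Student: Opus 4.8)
The plan is to verify the claim one degree at a time and then check that the Koszul differential is respected. Since colimits in ${\sf Com}$ are computed degreewise, it suffices to establish: (i) for each $n$, the functor $M\mapsto{\sf Kos}_n(M)=SM\otimes\Lambda^nM$ commutes with filtered colimits; and (ii) the comparison isomorphisms obtained in (i) intertwine the differentials, hence assemble into an isomorphism of complexes.

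First I would treat (i). Fix a filtered diagram $(M_i)_{i\in I}$. The one fact I would lean on is that for filtered $I$ the diagonal $I\hookrightarrow I\times I$ is cofinal; combined with the fact that $-\otimes-$ commutes with colimits in each variable, this gives $\varinjlim_i(A_i\otimes B_i)\cong(\varinjlim_iA_i)\otimes(\varinjlim_iB_i)$ for any two diagrams $(A_i)_{i\in I},(B_i)_{i\in I}$. In particular (by induction on $k$) each tensor-power functor $(-)^{\otimes k}$ commutes with filtered colimits. Next, $\Lambda^k$ and $S^k$ are quotients of $(-)^{\otimes k}$ by subfunctors whose value on a module is generated by elements built from finitely many vectors; since any finite family of elements of a filtered colimit comes from a single stage of the diagram, these subfunctors commute with filtered colimits, and hence so do $\Lambda^k$ and $S^k$, filtered colimits being exact in ${\sf Mod}$. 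As filtered colimits commute with direct sums, $SM=\bigoplus_{k\ge0}S^kM$ does too. Applying the displayed isomorphism with $A_i=SM_i$ and $B_i=\Lambda^nM_i$ then gives a natural isomorphism $\varinjlim_i{\sf Kos}_n(M_i)\cong S(\varinjlim_iM_i)\otimes\Lambda^n(\varinjlim_iM_i)={\sf Kos}_n(\varinjlim_iM_i)$.

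For (ii), I would note that each $\partial_n\colon{\sf Kos}_n\Rightarrow{\sf Kos}_{n-1}$ is a natural transformation of functors ${\sf Mod}\to{\sf Mod}$, so the structure maps $M_i\to\varinjlim_jM_j$ induce chain maps, and the resulting morphism $\varinjlim_i{\sf Kos}_\bullet(M_i)\to{\sf Kos}_\bullet(\varinjlim_jM_j)$ is a morphism of complexes which is a degreewise isomorphism by (i), hence a chain isomorphism. Alternatively, one could invoke Lemma \ref{lemma_Kos(A)}: the colimit complex is the graded algebra $S(\varinjlim_iM_i)\otimes\Lambda(\varinjlim_iM_i)$ equipped with the unique differential sending $1\otimes a$ to $a\otimes1$, which is precisely ${\sf Kos}_\bullet(\varinjlim_iM_i)$. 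I do not expect a real obstacle; the only step that genuinely uses filteredness of the index category is the cofinality of the diagonal $I\hookrightarrow I\times I$, and that single point drives both the tensor-power computation and the final tensor-product step.
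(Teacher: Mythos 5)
Your proof is correct and takes essentially the same approach as the paper: you show degreewise that ${\sf Kos}_n(M)=SM\otimes\Lambda^nM$ commutes with filtered colimits (via commutation of tensor products, symmetric and exterior powers) and then use naturality of the differential $\partial$ in $M$, which is exactly the paper's argument stated more tersely.
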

\begin{proof}
It follows from the fact that the functors of symmetric powers, exterior powers and tensor product commute with filtered colimits and the differential is natural in $M$.
\end{proof}

\begin{lemma}\label{lemma_Kos_contractible}
Assume that $M$ is a $\kk$-module, which is isomorphic to a finite direct sum of cyclic $\kk$-modules. Then the augmentation of ${\sf Kos}_\bullet(M)$ is a homotopy equivalence of complexes  $${\sf Kos}_\bullet(M)\overset{\simeq}\to \kk[0].$$
(The homotopy is not natural in $M$.) 
\end{lemma}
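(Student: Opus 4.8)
The plan is to reduce, via the multiplicativity of the Koszul complex, to the case of a single cyclic module, and then to compute ${\sf Kos}_\bullet(\kk/I)$ by hand, where a (non-natural) contracting homotopy falls out of the internal grading of $S(\kk/I)$. Writing $M\cong M_1\oplus\dots\oplus M_r$ with each $M_i$ cyclic and iterating Lemma~\ref{lemma_direct_summ_Kos}, we get an isomorphism of complexes of $\kk$-modules
$${\sf Kos}_\bullet(M)\;\cong\;{\sf Kos}_\bullet(M_1)\otimes\dots\otimes{\sf Kos}_\bullet(M_r)$$
(tensor over $\kk$). Under this isomorphism the augmentation of ${\sf Kos}_\bullet(M)$ — which is the projection of $SM$ onto its degree-zero part, the image of $\partial_1$ being the augmentation ideal $S^{\ge 1}M$ — corresponds to the tensor product of the augmentations, because $S(M_1\oplus\dots\oplus M_r)\cong SM_1\otimes\dots\otimes SM_r$ identifies augmentation ideals. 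Since a finite tensor product over $\kk$ of chain homotopy equivalences is again one (factor $\varphi_1\otimes\dots\otimes\varphi_r$ into a composite in which a single $\varphi_i\colon {\sf Kos}_\bullet(M_i)\to\kk[0]$ is tensored with identities at each stage; tensoring a homotopy equivalence with a fixed complex of $\kk$-modules preserves the property, the cross terms in the homotopies cancelling so that no flatness hypothesis is needed), and $\kk[0]\otimes_\kk\dots\otimes_\kk\kk[0]=\kk[0]$, it suffices to prove the lemma for a single cyclic module $M=\kk/I$ (with $I=0$ allowed, so $M=\kk$).

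Next I would treat $M=\kk/I$. As $\kk/I$ is generated by one element, $\Lambda^n(\kk/I)$ is generated by $\bar 1\wedge\dots\wedge\bar 1=0$ for $n\ge 2$, so $\Lambda^n(\kk/I)=0$ for $n\ge 2$, while $\Lambda^1(\kk/I)=\kk/I$ and $\Lambda^0(\kk/I)=\kk$. Similarly $S^n(\kk/I)=\kk/I$ for $n\ge 1$, whence $S(\kk/I)\cong\kk[x]/(Ix)$ and $S(\kk/I)\otimes_\kk(\kk/I)\cong S(\kk/I)/I\cdot S(\kk/I)\cong(\kk/I)[x]$. Hence ${\sf Kos}_\bullet(\kk/I)$ is concentrated in degrees $1$ and $0$,
$$0\longrightarrow(\kk/I)[x]\xrightarrow{\ \partial\ }\kk[x]/(Ix)\longrightarrow 0,$$
and by Lemma~\ref{lemma_Kos(A)} the differential is $\partial(\beta\otimes\bar 1)=\beta x$, i.e.\ multiplication by $x$. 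This map is injective (in $\kk[x]$, $x\gamma=0$ forces $\gamma=0$) and its image is the positive-degree part $x\cdot S(\kk/I)=S^{\ge 1}(\kk/I)$, which is exactly the kernel of the augmentation $\varepsilon\colon S(\kk/I)\to\kk$. Since $S(\kk/I)=\kk\cdot 1\oplus S^{\ge 1}(\kk/I)$ as $\kk$-modules and $\varepsilon$ is the projection onto the first summand, the augmented complex $0\to(\kk/I)[x]\xrightarrow{\partial}\kk[x]/(Ix)\xrightarrow{\varepsilon}\kk\to 0$ is a split short exact sequence of $\kk$-modules. Therefore ${\sf Kos}_\bullet(\kk/I)\to\kk[0]$ is a homotopy equivalence of complexes of $\kk$-modules, a contracting homotopy being read off from the splitting; this splitting depends on the chosen generator of $\kk/I$ (and, for general $M$, on the direct-sum decomposition), which is why the homotopy is not natural.

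I do not expect a serious obstacle. The points that need care are: checking that the augmentation of ${\sf Kos}_\bullet(M)$ goes over to the tensor product of augmentations under Lemma~\ref{lemma_direct_summ_Kos}; the computations $S^n(\kk/I)=\kk/I$, $S(\kk/I)\otimes_\kk(\kk/I)\cong(\kk/I)[x]$, and the identification of $\partial$ with multiplication by $x$ via Lemma~\ref{lemma_Kos(A)}; and the routine fact that a $\kk$-tensor product of chain homotopy equivalences is a chain homotopy equivalence.
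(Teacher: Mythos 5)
Your proof is correct and takes essentially the same route as the paper: reduce via Lemma~\ref{lemma_direct_summ_Kos} to the cyclic case, observe that the cyclic Koszul complex is concentrated in degrees $0,1$ and the augmented complex is a split short exact sequence, and then propagate the homotopy equivalence through tensor products. The paper phrases the reduction as an induction on the number of summands whereas you factor the tensor product of augmentations into a composite of one-at-a-time steps, but the underlying argument is the same.
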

\begin{proof}  The proof is by induction on the number of summands.
If $M\cong \kk/I$ is cyclic, then $\Lambda^nM=0$ for $n\geq 2,$ and $S^nM\cong \kk/I\cong S^nM\otimes M$ and $n\geq 1.$ Then $SM=\kk\oplus \left(\bigoplus_{n\geq 1} \kk/I\right)$ and $SM\otimes M= \bigoplus_{n\geq 1} \kk/I.$ It follows that $ SM\otimes M \to SM\to \kk$ is a split short exact sequence. 

Assume that $M,N$ are $\kk$-modules such that ${\sf Kos}_\bullet(M)$  and ${\sf Kos}_\bullet(N)$ are homotopy equivalent to $\kk[0].$  By Lemma  \ref{lemma_direct_summ_Kos} we have ${\sf Kos}_\bullet(M\oplus N)\cong {\sf Kos}_\bullet(M)\otimes {\sf Kos}_\bullet(N).$ Homotopy equivalence is invariant with respect to the tensor product. The assertion follows.
\end{proof}

We denote by ${\sf Kos}'_\bullet(M)$ the  ``augmented'' Koszul complex, where ${\sf Kos}'_{-1}(M)=\kk$ and ${\sf Kos}'_{0}(M)\to {\sf Kos}'_{-1}(M)$ is the projection $SM\to S^0M\cong \kk.$

\begin{proposition}\label{prop_kos} Let  $M$ be a  $\kk$-module. Assume that 
$M$ can be presented as a filtered colimit of finite direct sums of cyclic modules
$$M={\sf colim}\:  M_i, \hspace{1cm} M_i\cong \kk/I_{1,i} \oplus \dots \oplus \kk/I_{n_i,i}.$$ Then the complex ${\sf Kos}'_\bullet(M)$ is pure acyclic. 
\end{proposition}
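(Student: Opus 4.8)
The plan is to realize ${\sf Kos}'_\bullet(M)$ as a filtered colimit of contractible complexes and then quote the last characterization of pure acyclic complexes in the definition.

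First I would note that forming the augmented complex is compatible with the colimit. The complex ${\sf Kos}'_\bullet(M)$ is obtained from ${\sf Kos}_\bullet(M)$ by attaching a fixed copy of $\kk$ in degree $-1$ along the augmentation $SM\to S^0M\cong \kk$, and these augmentations are compatible with the transition maps of the system $(M_i)$. Since ${\sf Kos}_\bullet$ commutes with filtered colimits by Lemma~\ref{lemma_Kos_direct_lim}, so does ${\sf Kos}'_\bullet$, and hence
$$ {\sf Kos}'_\bullet(M)\;\cong\;{\sf colim}_i\,{\sf Kos}'_\bullet(M_i), $$
a filtered colimit in the category of complexes of $\kk$-modules.

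Next I would check that each ${\sf Kos}'_\bullet(M_i)$ is contractible. By hypothesis $M_i$ is a finite direct sum of cyclic modules $\kk/I_{j,i}$, so Lemma~\ref{lemma_Kos_contractible} applies: the augmentation ${\sf Kos}_\bullet(M_i)\overset{\simeq}\to \kk[0]$ is a homotopy equivalence of complexes. Up to a shift, ${\sf Kos}'_\bullet(M_i)$ is exactly the mapping cone of this augmentation; the mapping cone of a homotopy equivalence is contractible, and a shift of a contractible complex is again contractible. Hence ${\sf Kos}'_\bullet(M_i)$ is contractible for every $i$.

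Putting the two steps together, ${\sf Kos}'_\bullet(M)$ is a filtered colimit of contractible complexes, hence pure acyclic. The argument is essentially formal once Lemmas~\ref{lemma_Kos_direct_lim} and~\ref{lemma_Kos_contractible} are in place; the only points needing attention are the bookkeeping that passing to the augmented complex does not interfere with filtered colimits, and the translation of ``the augmentation is a homotopy equivalence'' into ``the augmented complex is contractible.'' The one genuinely non-formal ingredient — contractibility of ${\sf Kos}_\bullet$ for a single finite direct sum of cyclic modules — has already been supplied, via the analysis of cyclic modules together with the tensor-product decomposition of Lemma~\ref{lemma_direct_summ_Kos}.
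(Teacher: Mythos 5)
Your argument is exactly the paper's: realize ${\sf Kos}'_\bullet(M)$ as a filtered colimit of the ${\sf Kos}'_\bullet(M_i)$ via Lemma~\ref{lemma_Kos_direct_lim}, see that each term is contractible via Lemma~\ref{lemma_Kos_contractible}, and invoke the characterization of pure acyclic complexes as filtered colimits of contractible ones. The extra mapping-cone bookkeeping you supply to pass from the homotopy equivalence ${\sf Kos}_\bullet(M_i)\simeq \kk[0]$ to contractibility of ${\sf Kos}'_\bullet(M_i)$ is correct and just makes explicit a step the paper leaves implicit.
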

\begin{proof}
Since $M$ is a direct limit of finite direct sums of cyclic modules, the complex ${\sf Kos}'_\bullet(M)$ is a direct limit of contractible complexes by Lemma \ref{lemma_Kos_contractible} and Lemma \ref{lemma_Kos_direct_lim}. Hence, it is pure acyclic. 
\end{proof}

The Koszul complex ${\sf Kos}_\bullet(M)$ has a natural decomposition into direct sum of complexes
$${\sf Kos}_\bullet(M)=\bigoplus_{k\geq 0} {\sf Kos}_\bullet(M,k),$$
where ${\sf Kos}_n(M,k)= S^{k-n}M \otimes \Lambda^n M:$
$$ {\sf Kos}_\bullet(M,k): \hspace{1cm} 0 \to \Lambda^k M \to M\otimes \Lambda^{k-1}M \to  \dots  \to  S^{k-1}M\otimes M \to  S^kM\to 0.
$$

\begin{theorem}\label{th_kos} 
Let $M$ be a module over a commutative ring $\kk$ and $k\geq 1$ be a natural number. Assume that one of the following properties hold: 
\begin{enumerate}
    \item $\kk$ is a principal ideal domain;
    \item $M$ is flat;
    \item  $k \cdot 1_\kk$ is invertible in $\kk.$
\end{enumerate}
Then the complex ${\sf Kos}_\bullet(M,k)$ is pure acyclic. Moreover, in the case (3) it is contractible. 
\end{theorem}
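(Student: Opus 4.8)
The plan is to reduce each of the three cases to the already-established pure acyclicity of the augmented total Koszul complex ${\sf Kos}'_\bullet(M)$, using the multiplicative/graded structure recorded in the preliminary lemmas. The first observation is that the full complex ${\sf Kos}_\bullet(M)$ splits as $\bigoplus_{k\ge 0}{\sf Kos}_\bullet(M,k)$, so pure acyclicity of ${\sf Kos}_\bullet(M)$ for all degrees at once is equivalent to pure acyclicity of every ${\sf Kos}_\bullet(M,k)$ with $k\ge 1$ (the $k=0$ summand being $\kk$ in degree $0$); this lets me argue with the whole complex and then read off the weight-graded pieces. Since pure acyclicity is detected after tensoring with finitely presented modules, and all the functors involved commute with filtered colimits (Lemma \ref{lemma_Kos_direct_lim}), in each case I will try to exhibit $M$ as a filtered colimit of modules for which ${\sf Kos}_\bullet$ is already known to be pure acyclic, and then invoke that a filtered colimit of pure acyclic complexes is pure acyclic.

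\textbf{Case (1), $\kk$ a PID.} Every finitely generated module over a PID is a finite direct sum of cyclic modules, and every module is the filtered colimit of its finitely generated submodules. Hence $M={\sf colim}\,M_i$ with each $M_i$ a finite direct sum of cyclic modules, which is exactly the hypothesis of Proposition \ref{prop_kos}. Therefore ${\sf Kos}'_\bullet(M)$ is pure acyclic, and splitting off the weight-$k$ summand gives that ${\sf Kos}_\bullet(M,k)$ is pure acyclic for each $k\ge 1$.

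\textbf{Case (2), $M$ flat.} A flat module is a filtered colimit of finitely generated free modules (Lazard), and for a free module $F=\bigoplus_j\kk$, Lemma \ref{lemma_direct_summ_Kos} gives ${\sf Kos}_\bullet(F)\cong\bigotimes_j{\sf Kos}_\bullet(\kk)$; since ${\sf Kos}_\bullet(\kk)$ is the contractible complex $\dots\to S^{n}\kk\xrightarrow{1} S^{n-1}\kk\to\dots$ (indeed $0\to\kk\xrightarrow{\mathrm{id}}\kk\to0$ in each weight $\ge1$), each ${\sf Kos}_\bullet(F)$ is contractible, hence pure acyclic; passing to the filtered colimit gives pure acyclicity of ${\sf Kos}_\bullet(M)$ and of each weight piece. (Alternatively one may cite \cite{illusie2006complexe} for acyclicity in the flat case and then note that acyclicity of $C_\bullet\otimes_\kk N$ for all $N$ is precisely pure acyclicity.)

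\textbf{Case (3), $k\cdot 1_\kk$ invertible; contractibility.} Here I argue directly that ${\sf Kos}_\bullet(M,k)$ is contractible by writing down a contracting homotopy. On ${\sf Kos}_\bullet(M)=SM\otimes\Lambda M$ there is a "Koszul" (Euler-type) homotopy $h$ raising exterior degree by one, given on generators by $\alpha\otimes\theta\mapsto$ (a suitable contraction/insertion pairing $SM\otimes M\to\Lambda M$ applied to $\alpha$), such that $\partial h+h\partial$ acts on $S^{k-n}M\otimes\Lambda^nM$ as multiplication by $k$ (the total internal degree): this is the standard computation $\partial h+h\partial=(\text{degree operator})$ for the Koszul differential, and it respects the weight decomposition. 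Since $k\cdot 1_\kk$ is invertible, $k^{-1}h$ is a contracting homotopy for ${\sf Kos}_\bullet(M,k)$, so this complex is contractible, in particular pure acyclic. The routine point to check is the precise form of $h$ and the identity $\partial h+h\partial=k\cdot\mathrm{id}$ in weight $k$; the cleanest route is to verify it on a single generator $a\in M$ (so ${\sf Kos}_\bullet(\kk a)$) and extend by the multiplicativity of Lemma \ref{lemma_direct_summ_Kos}, or to check it in the universal case and use naturality plus Lemma \ref{lemma_Kos_direct_lim}.

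\textbf{Main obstacle.} The least mechanical part is case (3): one must produce the homotopy $h$ in a way that is manifestly $\kk$-linear and compatible with the grading, and verify $\partial h + h\partial = k\cdot\mathrm{id}$ on weight $k$ without circularity. Cases (1) and (2) are essentially bookkeeping on top of Proposition \ref{prop_kos} and Lazard's theorem, once one notes that "pure acyclic" is exactly the filtered-colimit-of-contractibles / tensor-exact condition and that all relevant functors commute with filtered colimits.
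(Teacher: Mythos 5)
Your proposal follows the paper's proof essentially verbatim in cases (1) and (2): reduce to Proposition \ref{prop_kos} via filtered colimits of finite direct sums of cyclic (resp.\ free) modules, using Lemmas \ref{lemma_Kos_contractible} and \ref{lemma_Kos_direct_lim}, and read off the weight-$k$ summand. In case (3) you also have the paper's idea — an insertion homotopy $h$ with $\partial h+h\partial$ equal to the total degree, so that $k^{-1}h$ contracts ${\sf Kos}_\bullet(M,k)$ — but the shortcuts you propose for verifying the identity do not quite work: an arbitrary module over an arbitrary commutative ring is neither a filtered colimit of free modules nor a direct sum of cyclic modules, so neither ``check on $\kk a$ and extend by multiplicativity of Lemma \ref{lemma_direct_summ_Kos}'' nor ``check in the universal case and use Lemma \ref{lemma_Kos_direct_lim}'' covers general $M$. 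The correct completion is what the paper does: define $h_{n,m}(a_1\cdots a_n\otimes\beta)=\sum_i a_1\cdots\hat a_i\cdots a_n\otimes a_i\wedge\beta$ (a natural, manifestly well-defined $\kk$-linear map) and verify $\partial h+h\partial=(n+m)\cdot{\sf id}$ directly on generators of $S^nM\otimes\Lambda^mM$, which is a short explicit computation; with that substitution your argument is complete.
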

\begin{proof}
(1) If $\kk$ is a principal ideal domain, any finitely generated module is a finite direct sum of cyclic modules. Since any module can be presented as a direct limit of its finitely generated submodules, any module is a filtered colimit of finite sums of cyclic modules. Then it follows from Proposition \ref{prop_kos} and the fact that ${\sf Kos}_\bullet(M,k)$ is a direct summand of ${\sf Kos}'_\bullet(M,k)$ for $k\geq 1.$

(2) If $M$ is flat then by the Govorov-Lazard theorem it is a filtered colimit of finitely generated free modules. Then again by Proposition \ref{prop_kos} we obtain the assertion. 

(3) Here we prove that the complex ${\sf Kos}(M,k)$ is contractible if $k\cdot {\sf 1}_\kk$ is invertible in $\kk$. Consider the map 
$$h_{n,m}:S^nM \otimes \Lambda^m M \longrightarrow S^{n-1}M\otimes \Lambda^{m+1}M $$
given by
$$h_{n,m}(a_1\dots a_n \otimes \beta) = \sum_{i=1}^n a_1\dots \hat a_i \dots a_n \otimes a_i \wedge \beta.$$ We also consider the differentials
$\partial_{n,m}:S^nM \otimes \Lambda^mM\to S^{n+1}M \otimes \Lambda^{m-1}M.$
We claim that 
\begin{equation}\label{eq_homot_Kos}
\partial_{n-1,m+1} h_{n,m} + h_{n+1,m-1} \partial_{n,m} = (n+m) \cdot {\sf id}.
\end{equation}

Prove the equation \eqref{eq_homot_Kos}.  Consider elements  $\alpha=a_1 \dots  a_n\in S^n M$ and $\beta=b_1 \wedge \dots \wedge b_m\in \Lambda^m M.$ We set $\alpha_i=a_1 \dots \hat a_i \dots a_n$ and $\beta_j=b_1 \wedge \dots \wedge \hat b_j \wedge \dots \wedge b_m.$ Then $ h(\alpha\otimes \beta )=\sum_{i=1}^n \alpha_i \otimes a_i \wedge \beta$ and $ \partial(\alpha \otimes \beta )=\sum_{j=1}^m (-1)^{j+1} \alpha b_j\otimes \beta_j.$ 

\begin{equation*}
\begin{split}
\partial_{n-1,m+1} h_{n,m} (\alpha \otimes \beta ) &=  \partial_{n-1,m+1} \left(  \sum_{i=1}^n \alpha_i \otimes a_i \wedge \beta\right)\\
&=  \sum_{i=1}^n \left( \alpha \otimes \beta +  \sum_{j=1}^m (-1)^{j} \alpha_ib_j \otimes a_i \wedge \beta_j \right)\\
&=   n \alpha \otimes \beta +  \sum_{i=1}^n \sum_{j=1}^m (-1)^{j} \alpha_ib_j \otimes a_i \wedge \beta_j.
\end{split}
\end{equation*}

\begin{equation*}
\begin{split}
h_{n+1,m-1}\partial_{n,m}( \alpha \otimes \beta ) &= h\left( \sum_{j=1}^m (-1)^{j+1} \alpha b_j \otimes \beta_j \right) \\
& = \sum_{j=1}^m ( (-1)^{j+1} \alpha \otimes b_j \wedge \beta_j +   \sum_{i=1}^n (-1)^{j+1} \alpha_i b_j \otimes a_i \wedge \beta_j )\\
& = \sum_{j=1}^m (  \alpha \otimes  \beta +   \sum_{i=1}^n (-1)^{j+1} \alpha_i b_j \otimes a_i \wedge \beta_j )
\\
& = m \alpha\otimes \beta + \sum_{i=1}^n \sum_{j=1}^m (-1)^{j+1} \alpha_i b_j \otimes a_i \wedge \beta_j,
\end{split}    
\end{equation*}
 Therefore
$\partial_{n-1,m+1} h_{n,m} + h_{n+1,m-1} \partial_{n,m} = (n+m) \cdot {\sf id}.$
It follows that $\tilde h_i = \frac{1}{k} \cdot h_{k-i,i}$ is a contracting homotopy for ${\sf Kos}(M,k).$
\end{proof}

\subsection{Example}

This section is devoted to an example of a commutative ring $\kk$ and a finitely presented module $M$ such that ${\sf Kos}_\bullet(M,n)$ is not exact. Namely, for each $n$ we construct $\kk$ and $M$ such that the map $\partial:\Lambda^n M\to M \otimes \Lambda^{n-1}M $ given by 
$$\partial(a_1\wedge \dots \wedge a_n)= \sum_{i=1}^n (-1)^{i+1} a_i \otimes a_1\wedge \dots \wedge \hat a_i \wedge \dots \wedge a_n$$ is not a monomorphism. 

The example that we construct is motivated by an example of a Lie algebra $\gg$
 over a commutative ring $\kk$ that does not satisfy  the Poincar\'e-Birkhoff-Witt property given in \cite[\S 5]{cohn1963remark}.

\begin{proposition}\label{prop_counter_Kos} Let $n$ be a natural number and $p$ be a prime dividing $n.$ Assume that $\kk=\mathbb F_p[t_1,\dots,t_{n+1}]/(t_1^2,\dots,t_{n+1}^2)$ and $M$ is a $\kk$-module generated by $e_1,\dots,e_{n+1}$ modulo relation $t_1e_1+\dots+t_{n+1}e_{n+1}=0.$ 
$$M=\kk^{n+1}/\langle  t_1e_1+\dots+t_{n+1}e_{n+1} \rangle$$
Then the map
$\partial: \Lambda^nM\to M\otimes \Lambda^{n-1} M$
is not injective, and hence the complex ${\sf Kos}_\bullet(M,n)$ is not acyclic. More precisely, the element $t_1e_1 \wedge t_2e_2 \wedge \dots \wedge t_ne_n\in \Lambda^n M$ is a non-trivial element of  ${\sf Ker}(\partial: \Lambda^nM\to M\otimes \Lambda^{n-1} M).$  
\end{proposition}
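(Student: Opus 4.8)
The plan is to prove the two halves of the statement separately: that $\xi:=t_1e_1\wedge\dots\wedge t_ne_n$ lies in $\ker\partial$, and that $\xi\neq0$ in $\Lambda^nM$. For $S\subseteq\{1,\dots,n+1\}$ write $e_S:=\bigwedge_{l\in S}e_l$ (in increasing order) and $t_S:=\prod_{l\in S}t_l\in\kk$; set $s:=t_{\{1,\dots,n\}}$, $\pi_m:=t_{\{1,\dots,n+1\}\setminus\{m\}}$, and $g_i:=\bigwedge_{l\le n,\,l\neq i}t_le_l\in\Lambda^{n-1}M$. Recall that $\sum_{j=1}^{n+1}t_je_j=0$ in $M$, and that $t_m^2=0$ and $n\cdot1_\kk=0$ in $\kk$.

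For $\partial(\xi)=0$: pulling all scalars out of the wedge gives $\xi=s\,e_{\{1,\dots,n\}}$, so by $\kk$-linearity of $\partial$,
\[
\partial(\xi)=\sum_{i=1}^n(-1)^{i+1}(s\,e_i)\otimes e_{\{1,\dots,n\}\setminus\{i\}}.
\]
Multiplying the relation $\sum_jt_je_j=0$ by $t_{\{1,\dots,n\}\setminus\{i\}}$ and using $t_l^2=0$ collapses it to $s\,e_i=-\pi_ie_{n+1}$ in $M$. Moving the scalars $\pi_i$ across $\otimes$ and using $\pi_i\,e_{\{1,\dots,n\}\setminus\{i\}}=t_{n+1}g_i$ yields $\partial(\xi)=-e_{n+1}\otimes\Omega$ with $\Omega:=t_{n+1}\sum_{i=1}^n(-1)^{i+1}g_i\in\Lambda^{n-1}M$. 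To see $\Omega=0$: for $1\le i\le n-1$ wedge the relation $\sum_jt_je_j=0$ with $e_{\{1,\dots,n\}\setminus\{i,i+1\}}$, then multiply by $t_{\{1,\dots,n\}\setminus\{i,i+1\}}$ and by $t_{n+1}$; the $j=n+1$ summand dies because $t_{n+1}^2=0$, and a reordering of the wedge factors identifies the two surviving summands as $\pm t_{n+1}g_{i+1}$ and $\pm t_{n+1}g_i$ with the same sign, whence $t_{n+1}g_{i+1}=-t_{n+1}g_i$. Telescoping gives $t_{n+1}g_i=(-1)^{i-1}t_{n+1}g_1$, so $\Omega=\sum_{i=1}^n(-1)^{i+1}(-1)^{i-1}t_{n+1}g_1=n\cdot(t_{n+1}g_1)=0$ since $p\mid n$. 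Hence $\partial(\xi)=0$.

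For $\xi\neq0$ in $\Lambda^nM$: I would use the standard presentation $\Lambda^nM\cong\Lambda^n(\kk^{n+1})/\langle r\wedge\omega:\omega\in\Lambda^{n-1}(\kk^{n+1})\rangle$, where $r=\sum_jt_j\hat e_j$ and $\hat e_1,\dots,\hat e_{n+1}$ is the standard basis. A signed relabelling $\eta_c$ of the basis vectors $\hat e_{\{1,\dots,n+1\}\setminus\{c\}}$ of $\Lambda^n(\kk^{n+1})$ converts each relation into $t_a\eta_b-t_b\eta_a=0$ for a pair $a\neq b$, so $\Lambda^nM\cong\kk^{n+1}/\langle t_a\eta_b-t_b\eta_a\rangle$ and $\xi=\pm s\,\eta_{n+1}=\pm\pi_{n+1}\eta_{n+1}$. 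Grade $\kk$ by $\deg t_i=1$ and $\Lambda^nM$ accordingly with $\deg\eta_c=0$; then $\xi$ is homogeneous of degree $n$. In degree $n$ the free module $\bigoplus_c\kk\eta_c$ has $\mathbb{F}_p$-basis $\{\pi_m\eta_c\}_{m,c}$, while the degree-$n$ part of the relation submodule is spanned by the $t_S(t_a\eta_b-t_b\eta_a)$ with $|S|=n-1$; a short computation using $t_l^2=0$ shows these span exactly $\bigoplus_{m\neq c}\mathbb{F}_p\,\pi_m\eta_c$ together with the trace-zero subspace of $\bigoplus_c\mathbb{F}_p\,\pi_c\eta_c$. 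Hence the degree-$n$ part of $\Lambda^nM$ is one-dimensional over $\mathbb{F}_p$, detected by the trace $\sum_cd_c\,\pi_c\eta_c\mapsto\sum_cd_c$, under which $\xi\mapsto\pm1\neq0$; therefore $\xi\neq0$.

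The hard part is the vanishing of $\Omega$: one must notice that, after replacing $s\,e_i$ by $-\pi_ie_{n+1}$, the decisive move is to multiply through by $t_{n+1}$ and exploit $t_{n+1}^2=0$ to collapse all the $g_i$ onto a single one, so that the alternating sum produces a factor $n$ — which dies precisely because $p\mid n$, the only place the hypothesis enters. Keeping the signs straight in the telescoping, and computing the degree-$n$ part of the relation submodule in the second half, are the remaining technical points.
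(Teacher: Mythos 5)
Your proof is correct, and for the non-vanishing of $\xi = t_1e_1\wedge\dots\wedge t_ne_n$ it takes a genuinely different route from the paper. For $\partial(\xi)=0$ the two arguments are close in spirit: the paper proves a permutation identity $x_{\sigma(1)}\otimes\dots\otimes x_{\sigma(n)}=\operatorname{sign}(\sigma)\,x_1\otimes\dots\otimes x_n$ in $M^{\otimes n}$ (using $x_i\otimes f\otimes x_i=0$ and $\sum x_j=0$) and deduces that all $n$ terms of $\partial(\xi)$ are equal, yielding $n\cdot(\cdot)=0$; you instead push everything onto the $e_{n+1}$ factor, reduce $\partial(\xi)$ to $-e_{n+1}\otimes\Omega$, and telescope the $t_{n+1}g_i$'s to produce the same factor of $n$. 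Both hinge on $t_l^2=0$, the linear relation, and $p\mid n$; yours is a bit more explicit, the paper's a bit more conceptual. The real divergence is the non-vanishing. The paper passes to the polynomial ring $\mathbb F_p[t_1,\dots,t_{n+1}]$, presents $\Lambda^nM$ over that ring, exhibits a Gr\"obner basis for the kernel and computes a nonzero normal form. You stay over $\kk$, use the right-exact presentation $\Lambda^nM\cong\Lambda^n(\kk^{n+1})/\langle r\wedge\omega\rangle$ with $r=\sum t_j\hat e_j$, normalize signs so the relations become $t_a\eta_b-t_b\eta_a$, and then exploit the internal grading: in degree $n$ the relations land only in off-diagonal terms $\pi_m\eta_c$ ($m\ne c$) and diagonal differences $\pi_b\eta_b-\pi_a\eta_a$, so the trace functional $\sum d_c\pi_c\eta_c\mapsto\sum d_c$ descends and detects $\xi=\pm\pi_{n+1}\eta_{n+1}$. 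This dodges Gr\"obner machinery entirely and replaces it with a one-dimensional graded invariant, which is arguably cleaner and more self-contained; the paper's approach has the advantage of simultaneously producing a full normal-form description of $\Lambda^nM$, which could be reused for finer computations. One small remark: for your Part 2 you do not actually need the degree-$n$ relation subspace to equal the claimed span exactly -- it suffices that it is \emph{contained} in the kernel of the trace functional, which is the easy inclusion; so that ``short computation'' can be pared down further.
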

\begin{proof} Set $x_i=t_ie_i.$ We need to prove two statements:
(A) $\partial(x_1\wedge \dots \wedge x_n)=0;$
(B) $x_1\wedge \dots \wedge x_n\ne 0.$

(A) Note that $x_i\otimes x_i=t_i^2(e_i\otimes e_i)=0$ in $M\otimes M.$ Moreover, by the same reason for any $f\in M^{\otimes k}$ we have $x_i\otimes f \otimes x_i=0$ in $M^{\otimes k+2}.$
We claim that for any permutation from the symmetric group $\sigma\in \Sigma_{n+1}$ we have the following equation in $M^{\otimes n}:$
\begin{equation}\label{eq_perm}
 x_{\sigma(1)} \otimes x_{\sigma(2)} \otimes  \dots \otimes x_{\sigma(n)}={\sf sign}(\sigma)\cdot  x_{1} \otimes x_{2} \otimes \dots \otimes x_{n}.   
\end{equation}
Since permutations of the form $(i,n+1)$ generate the symmetric group $\Sigma_{n+1},$ it is enough to check that, if $\sigma =\tau \circ (i,n+1),$ then 
$$x_{\sigma(1)} \otimes x_{\sigma(2)} \otimes  \dots \otimes x_{\sigma(n)}= -  x_{\tau(1)} \otimes x_{\tau(2)} \otimes \dots \otimes x_{\tau(n)}.$$ 
Set $f=x_{\sigma(1)}\otimes \dots x_{\sigma(i-1)}$ and $g=x_{\sigma(i+1)}\otimes \dots x_{\sigma(n)}.$ Then we need to prove that 
$$f\otimes x_{\sigma(i)} \otimes g=- f\otimes x_{\sigma(n+1)} \otimes g.$$
Note that $f\otimes x_{\sigma(j)}\otimes g=0$ for $j\notin \{i,n+1\},$ because $x_{\sigma(j)}$ arises in $f$ or $g.$ Using this we obtain 
$$f\otimes x_{\sigma(i)}\otimes g=f\otimes \left( -\sum_{j\ne i} x_{\sigma(j)}  \right)\otimes g= -f\otimes x_{\sigma(n+1)}\otimes g.$$
So we have proved the equation \eqref{eq_perm}.

The equation \eqref{eq_perm} implies a similar equation in $M\otimes \Lambda^{n-1}M$ 
$$x_{\sigma(1)} \otimes x_{\sigma(2)} \wedge  \dots \wedge x_{\sigma(n)}={\sf sign}(\sigma)\cdot  x_{1} \otimes x_{2} \wedge \dots \wedge x_{n}.$$
In particular 
$$(-1)^{i+1} x_i \otimes x_1\wedge \dots \wedge \hat x_i \wedge \dots \wedge x_n= x_1 \otimes x_2\wedge  \dots \wedge x_n.$$
It follows that 
$$\partial(x_1 \wedge \dots \wedge x_n)=n \cdot x_1 \otimes x_2 \wedge \dots \wedge x_n=0$$
because the characteristic $p$ divides $n.$ 

(B) Prove that $x_1 \wedge \dots \wedge x_n\ne 0.$ In this proof we use
the theory of Gr\"obner bases for modules over polynomial rings that can be found in \cite[Ch.3]{adams1994introduction}.  The module $M$ can be considered as a module over the polynomial ring $\mathbb F_p[t_1,\dots,t_{n+1}].$ Denote by $F=\mathbb F_p[t_1,\dots,t_{n+1}]^{n+1}$ the free module over the polynomial ring generated by $e_1,\dots,e_{n+1}.$ Then the kernel of the map $F\epi M$ is generated by the elements of the form $t_i^2 e_j $ and the element $t_1e_1+\dots+t_{n+1}e_{n+1}.$ Therefore the kernel  $K={\sf Ker}(\Lambda^nF\to \Lambda^n M)$ is generated by the elements of two types
\begin{enumerate}
\item[(1)] $t_{i}^2 e_{j_1}\wedge \dots \wedge e_{j_n},$ where $j_1<\dots <j_n;$
\item[(2)] $(t_1e_1+\dots+t_{n+1}e_{n+1}) \wedge e_{j_2} \wedge \dots \wedge e_{j_n},$ where $j_2<\dots<j_n.$
\end{enumerate}

For $i\in \{1,\dots,n\}$ we set $E_i=e_1 \wedge \dots \wedge \hat e_i \wedge \dots \wedge e_{n+1}\in \Lambda^n F.$ Then $E_1,\dots,E_{n+1}$ is a basis of $\Lambda^n F.$ Similarly we set $E_{i,j}=e_1 \wedge \dots \wedge \hat e_i \wedge \dots \wedge \hat e_j \wedge \dots \wedge e_{n+1} \in  \Lambda^{n-1} M$ for $i<j.$ Then the generators of $K$ can be rewritten as follows
(1) $t_i^2 E_j;$ 
(2) $(t_1e_1+\dots+t_{n+1}e_{n+1}) \wedge E_{i,j}.$ Note that 
$$e_i\wedge E_{i,j}=(-1)^{i+1} E_j, \hspace{1cm} e_j \wedge E_{i,j}=(-1)^jE_i $$
and $e_k \wedge E_{i,j}=0$ for $k\notin \{i,j\}.$ Using these identities we obtain that the generators of $K$ can be presented as
\begin{itemize}
\item[(G1)] $t_i^2E_j$
\item[(G2)] $t_jE_i+ (-1)^{i+j+1}t_iE_j$ for $i<j.$
\end{itemize}

It is easy to see that the following elements also lie in $K$
\begin{itemize}
\item[(G3)] $t_it_jE_i$ for all $i,j.$
\end{itemize}
We consider the following order on generators of the ring $t_1<\dots<t_{n+1}$ and the opposite order on generators of the module $E_{n+1}<\dots<E_1.$ Then  the leading term of $t_jE_i+ (-1)^{i+j+1}t_iE_j$ for $i<j$ is $t_jE_i.$
Using \cite[Theorem 3.5.19]{adams1994introduction}, it is easy to check that (G1),(G2),(G3) form a Gr\"obner basis of $K.$   Then the reduced form of the element $x_1\wedge \dots \wedge x_n$ with respect to the Gr\"obner basis is $t_1\dots t_n E_{n+1}.$ Since the reduced form is non-trivial, the element is non-trivial. 
\end{proof}

\begin{corollary}\label{cor_counter_kos}
Let $\kk=\mathbb F_2[t_1,t_2,t_3]/(t_1^2,t_2^2,t_3^2)$ and $M=\kk^3/\langle t_1e_1+t_2e_2+t_3e_3 \rangle.$
Then the map
$$\partial: \Lambda^2M\to M\otimes  M, \hspace{1cm} \partial(a\wedge b)=a\otimes b - b\otimes a$$
is not injective. More precisely, the element $t_1e_1\wedge t_2e_2$ is a non-trivial element of ${\sf Ker}(\Lambda^2M\to M\otimes  M).$
\end{corollary}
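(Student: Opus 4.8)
The plan is to obtain this corollary as the specialization of Proposition \ref{prop_counter_Kos} to $n = 2$ and $p = 2$. Since $2$ is a prime dividing $2$, the hypotheses of that proposition are satisfied by exactly this ring $\kk = \mathbb F_2[t_1,t_2,t_3]/(t_1^2,t_2^2,t_3^2)$ and this module $M = \kk^3/\langle t_1e_1+t_2e_2+t_3e_3\rangle$. The proposition then directly tells us that the Koszul differential $\partial \colon \Lambda^2 M \to M \otimes \Lambda^{1} M$ fails to be injective and, more precisely, that $t_1e_1 \wedge t_2e_2 \wedge \dots \wedge t_ne_n$, which for $n=2$ is just $t_1e_1\wedge t_2e_2$, is a non-trivial element of its kernel.

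What remains is only a matter of identifying the objects and maps involved. For $n=2$ the target $M\otimes\Lambda^{1}M$ is simply $M\otimes M$, and unwinding the general Koszul formula $\partial(a_1\wedge\dots\wedge a_n)=\sum_{i=1}^n(-1)^{i+1}a_i\otimes a_1\wedge\dots\wedge\hat a_i\wedge\dots\wedge a_n$ in degree $2$ gives $\partial(a\wedge b)=a\otimes b-b\otimes a$, which is precisely the map displayed in the statement. Since ${\sf Kos}_\bullet(M,2)$ has the shape $0\to\Lambda^2 M\to M\otimes M\to S^2 M\to 0$, the non-injectivity of $\partial$ also shows this complex is not acyclic, which is the secondary claim implicit in the context.

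There is no genuine obstacle here: the entire content is the reduction. All the actual work is already done inside the proof of Proposition \ref{prop_counter_Kos} — namely (i) the vanishing $\partial(t_1e_1\wedge t_2e_2)=2\cdot t_1e_1\otimes t_2e_2=0$, which uses $t_i^2=0$ and the relation $t_1e_1+t_2e_2+t_3e_3=0$ to rewrite the two summands as a single one, and (ii) the non-triviality of $t_1e_1\wedge t_2e_2$ in $\Lambda^2 M$, established by passing to the polynomial ring $\mathbb F_2[t_1,t_2,t_3]$ and computing a Gröbner basis of the kernel $K={\sf Ker}(\Lambda^2 F\to\Lambda^2 M)$, against which the reduced form of $t_1e_1\wedge t_2e_2$ is $t_1t_2 E_3\neq 0$. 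If one preferred a self-contained argument, one could also just transcribe these two computations for $n=2$ verbatim, but invoking the proposition is the cleanest route.
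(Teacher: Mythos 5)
Your proposal is correct and is exactly the paper's (implicit) approach: the corollary is simply Proposition~\ref{prop_counter_Kos} specialized to $n=2$, $p=2$, together with the routine observations that $M\otimes\Lambda^{1}M=M\otimes M$ and that the Koszul differential in degree~$2$ reads $\partial(a\wedge b)=a\otimes b-b\otimes a$. The paper gives no separate proof for the corollary, so there is nothing further to add.
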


\section{Relative Tor homology}

Let $\gg$ be a Lie algebra over a commutative ring $\kk$ and $M$ be a right $U\gg$-module. Then the  Chevalley-Eilenberg chain complex ${\sf CE}_\bullet(\gg,M)$ is a complex whose components are
$${\sf CE}_n(\gg,M)=M\otimes \Lambda^n \gg$$
and the differential is given by 
\begin{equation*}
\begin{split}
\partial(m\otimes x_1\wedge \dots \wedge x_n)=& \sum_{i=1}^n(-1)^{i+1}mx_i\otimes x_1\wedge \dots \wedge \hat x_i \wedge \dots \wedge x_n + \\
&  \sum_{i<j}(-1)^{i+j} m \otimes  [x_i,x_j] \wedge x_1\wedge \dots \wedge \hat x_i \wedge \dots \wedge \hat x_j \wedge \dots \wedge x_n.
\end{split}
\end{equation*}
It is easy to see that ${\sf CE}_\bullet(\gg)\cong {\sf CE}_\bullet(\gg,\kk).$

We define Chevalley-Eilenberg homology of $\gg$ with coefficients in $M$ as the homology of this complex
$$H^{\sf CE}_*(\gg,M):=H_*({\sf CE}_\bullet(\gg,M)).$$
We also define relative Tor  homology of $\gg$ as follows
$$H_*^{\sf RT}(\gg,M)={\sf Tor}^{(U\gg,\kk)}_*(M,\kk).$$

\begin{theorem}\label{theorem_CE_rel} Let $\kk$ be a commutative ring and $\gg$ be a  Lie algebra over $\kk.$ Assume that 
\begin{enumerate}
    \item either $\kk$ is a principal ideal domain, 
    \item or $\gg$ is flat over $\kk.$ 
\end{enumerate}
Then for any $U\gg$-module $M$ there is a natural isomorphism 
$$H^{\sf CE}_*(\gg,M)\cong  H^{\sf RT}_*(\gg,M).$$
\end{theorem}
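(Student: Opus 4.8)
The plan is to exhibit the Chevalley--Eilenberg complex ${\sf CE}_\bullet(\gg,M)$ as $P_\bullet\otimes_{U\gg}M$ for a suitable resolution $P_\bullet\to\kk$ of the trivial $U\gg$-module, and then invoke Theorem \ref{theorem_pure_rel_gen}. Concretely, consider the $U\gg$-module complex $V_\bullet(\gg)$ with $V_n(\gg)=U\gg\otimes\Lambda^n\gg$ and the usual Chevalley--Eilenberg differential; this is the standard ``Koszul-type'' complex whose tensoring down along $U\gg\otimes_{U\gg}M\cong M$ recovers ${\sf CE}_\bullet(\gg,M)$, so that $H^{\sf CE}_*(\gg,M)\cong H_*(V_\bullet(\gg)\otimes_{U\gg}M)$. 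Each $V_n(\gg)=U\gg\otimes_\kk\Lambda^n\gg$ is $(U\gg,\kk)$-projective (it is extended from the $\kk$-module $\Lambda^n\gg$), so the only thing to check is that $V_\bullet(\gg)\to\kk$ is an $S$-pure resolution, i.e.\ that the augmented complex ${\sf Cone}(V_\bullet(\gg)\to\kk[0])$ is pure acyclic as a complex of $\kk$-modules. Once that is established, Theorem \ref{theorem_pure_rel_gen} gives $H_*(V_\bullet(\gg)\otimes_{U\gg}\kk)\cong{\sf Tor}^{(U\gg,\kk)}_*(\kk,\kk)$, and the analogous statement with a general left coefficient module $M$ (placed on the correct side); combined with the identification above this yields the claimed isomorphism $H^{\sf CE}_*(\gg,M)\cong H^{\sf RT}_*(\gg,M)$.

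So the heart of the argument is purity of the augmented complex $\kk\otimes_\kk V_\bullet(\gg)$ — and here is where the two hypotheses enter and where the Koszul machinery of Section \ref{section_Koszul} is used. The point is that the associated graded of $V_\bullet(\gg)$ with respect to the PBW-type filtration on $U\gg$ is, degreewise, built out of the Koszul complexes ${\sf Kos}_\bullet(\gg,k)$ relating symmetric and exterior powers of the $\kk$-module $\gg$ (using that $\operatorname{gr}U\gg$ involves $S\gg$, at least after passing to $\operatorname{gr}$; in the flat case this is literally PBW, in the PID case one argues with the filtration directly). By Theorem \ref{th_kos}, under hypothesis (1) or (2) each ${\sf Kos}_\bullet(\gg,k)$ is pure acyclic. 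One then assembles these: the augmented $V_\bullet$ is filtered with associated graded a direct sum of (shifts of) the ${\sf Kos}_\bullet(\gg,k)$, and purity passes from the graded pieces to the filtered object by Lemma \ref{lemma_3x3} (the pure $3\times3$-lemma) together with the fact that a filtered colimit of pure acyclic complexes is pure acyclic — taking care that all the relevant degreewise short exact sequences are pure, which holds because they come from the Koszul decomposition and the exterior/symmetric power functors behave well. This gives pure acyclicity of the augmented resolution, which is exactly the $S$-pure condition needed.

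The remaining steps are bookkeeping. In the flat case (2) one may alternatively shortcut: if $\gg$ is flat then $U\gg$ is flat over $\kk$ (by PBW, $U\gg$ is a filtered colimit of the flat modules built from $S^{\le n}\gg$), the $V_\bullet(\gg)$ is then an honest flat $U\gg$-resolution of $\kk$ that is also $\kk$-split, so both Theorem \ref{theorem_pure_rel_gen} and Proposition \ref{prop_tor_rel-ord} apply and the isomorphism is immediate; this also recovers the known classical statement. In the PID case one cannot avoid the Koszul input. Finally one should note that the differential of $V_\bullet(\gg)\otimes_{U\gg}M$ matches, on the nose, the stated Chevalley--Eilenberg differential for ${\sf CE}_\bullet(\gg,M)$ — a direct computation — and that all isomorphisms are natural in $M$ and in $\gg$, since every construction used ($V_\bullet$, the filtration, the comparison with $\sf Tor^{(U\gg,\kk)}$) is functorial.

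\textbf{Main obstacle.} The delicate point is the purity of the augmented resolution in the PID case: one must show that the PBW-type filtration on $U\gg$ descends to a filtration of $V_\bullet(\gg)$ whose associated graded complex is (a sum of shifts of) the Koszul complexes ${\sf Kos}_\bullet(\gg,k)$ \emph{as complexes of $\kk$-modules}, and that this identification is compatible enough with the pure $3\times3$-lemma and filtered colimits to transport pure acyclicity. Without flatness one does not have a clean $\operatorname{gr}U\gg\cong S\gg$, so some care is needed in setting up the filtration and checking that the connecting maps in the filtration quotients are the expected Koszul differentials; I expect this comparison — rather than any single homological-algebra lemma — to be where the real work lies.
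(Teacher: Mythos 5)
Your outline follows the paper's own proof essentially step by step: the paper also takes $V_\bullet={\sf CE}_\bullet(\gg,U\gg)$, filters it by $F_kV_n=F_{k-n}U\gg\otimes\Lambda^n\gg$, identifies the associated graded with $\bigoplus_k{\sf Kos}_\bullet(\gg,k)$ via the PBW isomorphism (Lemma \ref{lemma_kos_GV_iso}), deduces pure acyclicity of the augmented complex from Theorem \ref{th_kos}, the pure $3\times 3$-lemma (Lemma \ref{lemma_3x3}) and passage to the filtered colimit, and concludes by Theorem \ref{theorem_pure_rel_gen} together with $M\otimes_{U\gg}V_\bullet\cong{\sf CE}_\bullet(\gg,M)$. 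So the strategy is the right one and is the paper's.

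The one place where you stop short is exactly the step you flag, but your diagnosis of it is slightly off. The Poincar\'e--Birkhoff--Witt property holds both when $\gg$ is flat and when $\kk$ is a principal ideal domain (the paper cites Higgins and Grinberg for this), so $GU\gg\cong S\gg$, and hence the identification of the graded pieces of $V_\bullet$ with the Koszul complexes, is available in both cases; no ad hoc ``arguing with the filtration directly'' is needed there. What does require a genuine argument in the PID case is the $\kk$-purity of $F_nU\gg$ inside $U\gg$ (Lemma \ref{lemma_F_nU_pure}): without it one cannot even define $F_kV_n=F_{k-n}U\otimes\Lambda^n\gg$ as a submodule of $V_n$ (injectivity of $F_{k-n}U\otimes\Lambda^n\gg\to U\otimes\Lambda^n\gg$ is not automatic), nor know that the degreewise sequences $F_kV_n\mono F_{k+1}V_n\epi S^{k+1-n}\gg\otimes\Lambda^n\gg$ are pure, which is what the pure $3\times 3$-lemma needs. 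The paper proves this purity by base change: for each $a\in\kk$, PBW over $\kk$ and over $\kk/a$ gives $GU\gg\otimes\kk/a\cong GU^{\kk/a}(\gg\otimes\kk/a)$, hence $(U/F_nU)\otimes\kk/a\cong U^{\kk/a}/F_nU^{\kk/a}$ and injectivity of $(F_{n+1}U/F_nU)\otimes\kk/a\to(U/F_nU)\otimes\kk/a$ follows; over a PID this yields purity. Supplying this lemma closes your argument. Two minor points: in the flat case your claim that $V_\bullet$ is $\kk$-split is not justified (pure acyclicity is what one actually gets, and it suffices, via Lemma \ref{lemma_flatU}, Theorem \ref{theorem_pure_rel_gen} or Proposition \ref{prop_tor_rel-ord}); and one should check, as the paper does, that the filtration is indeed preserved by the full Chevalley--Eilenberg differential and that the bracket term dies in the associated graded, so that the graded differential is exactly the Koszul one.
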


In order to prove this theorem we need several lemmas. For simplicity we set $U=U\gg$ and denote by $F_nU=F_nU\gg$ the standard filtration on $U.$ We also set $F_\infty U=U$ and $F_{-1}U=0.$ We also denote by 
$$GU = \bigoplus_{n\geq 0} G_nU, \hspace{1cm} 
G_nU=F_nU/F_{n-1}U$$
the assotiated graded ring and consider the Poincar\'e–Birkhoff–Witt homomorphism
$$S\gg \to GU.$$

\begin{lemma}\label{lemma_commuting_filt_colim}
The functors of symmetric and exterior powers $S^n,\Lambda^n:{\sf Mod}\to {\sf Mod}$ and their Dold-Puppe derived functors $L_i S^n,L_i\Lambda^n :{\sf Mod}\to {\sf Mod}$ commute with filtered colimits.
\end{lemma}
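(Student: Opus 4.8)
The plan is to reduce the statement to two standard facts about filtered colimits of $\kk$-modules: that they are computed on underlying sets, so the forgetful functor $\UU:{\sf Mod}\to{\sf Sets}$ preserves them, and that they are exact, so they commute with cokernels and with homology. The non-derived case is then immediate. The tensor power $\otimes^n:{\sf Mod}\to{\sf Mod}$ preserves filtered colimits, since $\otimes$ preserves them in each variable and filtered colimits commute with one another; and for $n\geq 1$ the functor $S^n$, resp.\ $\Lambda^n$, is the cokernel of the inclusion $D_n\mono\otimes^n$, where $D_n$ is the subfunctor generated, naturally in $M$, by the Serre, resp.\ Koszul, relations. This subfunctor preserves filtered colimits because every element of $D_nM$, and every relation between such elements, involves only finitely many elements of $M$ and so is already represented in $D_nM_j$ for some index $j$ of the diagram. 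Since filtered colimits commute with cokernels, $S^n$ and $\Lambda^n$ preserve them. (For $n=0$ these are the constant functor $\kk$, where there is nothing to prove.)

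For the derived functors I would fix, functorially in $A$, the free simplicial resolution $\beta_\bullet(A):={\sf Bar}(\GG)(A)$ attached to the free-forgetful adjunction between ${\sf Sets}$ and ${\sf Mod}$, where $\GG=\FF\UU$ and $\FF:{\sf Sets}\to{\sf Mod}$ is the free $\kk$-module functor, so that $\beta_n(A)=\GG^{n+1}(A)$ is a free $\kk$-module. By Lemma \ref{lemma_extra} the underlying augmented simplicial set $\UU\beta_\bullet(A)$ is homotopy equivalent to the constant simplicial set on $\UU A$, so $\beta_\bullet(A)\to A$ is a degreewise free simplicial resolution; by the invariance of Dold-Puppe derived functors under the choice of projective resolution, $L_i\Psi(A)\cong\pi_i(\Psi\beta_\bullet(A))$ for $\Psi\in\{S^n,\Lambda^n\}$. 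The decisive point is that $\beta_\bullet$ preserves filtered colimits levelwise: $\UU$ preserves filtered colimits and $\FF$ preserves all colimits, so $\GG^{n+1}$ preserves filtered colimits, and the simplicial structure maps are natural; hence $\beta_\bullet({\sf colim}_j A_j)\cong{\sf colim}_j\beta_\bullet(A_j)$ as simplicial $\kk$-modules.

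Combining these, for a filtered diagram $A={\sf colim}_j A_j$ I would chain
\begin{align*}
L_i\Psi(A) &\cong \pi_i(\Psi\beta_\bullet({\sf colim}_j A_j)) \cong \pi_i(\Psi({\sf colim}_j\beta_\bullet(A_j)))\\
&\cong \pi_i({\sf colim}_j\,\Psi\beta_\bullet(A_j)) \cong {\sf colim}_j\,\pi_i(\Psi\beta_\bullet(A_j)) \cong {\sf colim}_j\,L_i\Psi(A_j),
\end{align*}
where the second isomorphism is the levelwise colimit-preservation of $\beta_\bullet$, the third is the non-derived case applied in each simplicial degree, and the fourth uses that $\pi_i$ of a simplicial $\kk$-module is the homology of its associated chain complex and that both the passage to the chain complex and the homology commute with filtered colimits by exactness. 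The main obstacle to flag is precisely this choice of resolution: because $\Psi$ is not additive, an arbitrary flat simplicial resolution need not compute $L_i\Psi$, so one cannot simply take a filtered colimit of arbitrary flat resolutions of the $A_j$; one must exhibit a single functorial resolution that is at once degreewise projective and compatible with filtered colimits, and the bar resolution above is such a choice, the only genuine input being its identification as a legitimate projective resolution via Lemma \ref{lemma_extra}.
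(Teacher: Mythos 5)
Your proof is correct and takes essentially the same route as the paper: the derived case is handled, exactly as in the paper, via the functorial bar resolution of the comonad of the ${\sf Sets}\rightleftarrows{\sf Mod}$ adjunction (degreewise free, preserving filtered colimits levelwise) together with exactness of filtered colimits on homotopy/homology. The only minor divergence is the non-derived step, where the paper notes that $S$ and $\Lambda$ are (co)limit-preserving free-algebra functors followed by forgetful functors, while you realize $S^n,\Lambda^n$ as cokernels of finitary subfunctors of $\otimes^n$; both arguments are standard and equally valid.
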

\begin{proof}
Since the functors $S:{\sf Mod} \to {\sf CommAlg}$ and $\Lambda:{\sf Mod}\to {\sf GradCommAlg}$ are right adjoint, they commute with all colimits. It is well-known that filtered colimits of modules with additional algebraic structures commutes with forgetful functors to modules ${\sf CommAlg}\to {\sf Mod}$ and ${\sf GradCommAlg}\to {\sf Mod}$ (cf. \cite[Ch. IX, \S 1]{mclane1971categories}). Therefore $S^n, \Lambda^n:{\sf Mod}\to {\sf Mod}$  commute with filtered colimits. 

 For any module $M$ we consider its simplicial bar resolution $B_\bullet(M),$ where $B_0(M)=\kk^{\oplus M}$ and $B_{i+1}(M)=\kk^{\oplus B_i(M)}.$ It is the bar construction corresponding to the comonad $\GG$ on ${\sf Mod}$ given by $\GG(M)=\kk^{\oplus M}$ that comes from the free-forgetful adjunction ${\sf Sets}\leftrightarrows {\sf Mod}.$  It is easy to see that $\GG$ commutes with filtered colimits. Therefore the bar resolution $B_\bullet(M)$ also commutes with filtered colimits. Homology of a chain complex also commutes with filtered colimits, because filtered colimit is an exact functor. Therefore, $L_i S^n(-)= \pi_i(S^n (B_\bullet(-)))$ also commutes with filtered colimits. Similarly $L_i\Lambda^n$ commutes with filtered colimits.
\end{proof}

\begin{lemma}\label{lemma_flatU}
If $\gg$ is flat, then $U$ is flat. Moreover $F_nU/F_mU$ is flat for any $-1 \leq  m\leq n\leq \infty.$  
\end{lemma}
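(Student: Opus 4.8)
The plan is to reduce the statement to the Poincar\'e--Birkhoff--Witt isomorphism together with the stability of the class of flat $\kk$-modules under filtered colimits and under extensions. First I would invoke the Poincar\'e--Birkhoff--Witt theorem, which applies because $\gg$ is flat over $\kk$: the homomorphism $S\gg\to GU$ is an isomorphism of graded $\kk$-modules, so in particular $G_nU\cong S^n\gg$ for every $n\geq 0$.

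Next I would check that $S^n\gg$ is flat. By the Govorov--Lazard theorem a flat module is a filtered colimit of finitely generated free $\kk$-modules, so $\gg={\sf colim}_i\, P_i$ with each $P_i$ finitely generated free. By Lemma \ref{lemma_commuting_filt_colim} the functor $S^n$ commutes with filtered colimits, and $S^n(P_i)$ is again finitely generated free; hence $S^n\gg\cong {\sf colim}_i\, S^n(P_i)$ is a filtered colimit of flat modules, and therefore flat. Combined with the previous step, this shows that $G_nU$ is flat for all $n\geq 0$.

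Then I would prove by induction on $n$ that $F_nU$ is flat for $-1\leq n<\infty$: the base case $F_{-1}U=0$ is trivial, and the short exact sequence $0\to F_{n-1}U\to F_nU\to G_nU\to 0$ presents $F_nU$ as an extension of the flat module $G_nU$ by the flat module $F_{n-1}U$, and a module fitting into a short exact sequence between two flat modules is flat (long exact sequence for ${\sf Tor}_1$). Consequently $U=F_\infty U={\sf colim}_n\, F_nU$ is a filtered colimit of flat modules, hence flat. For the ``moreover'' part: if $-1\leq m\leq n<\infty$ then $F_nU/F_mU$ carries a finite filtration with successive quotients $G_{m+1}U,\dots,G_nU$, all flat, so it is flat by iterated extensions; if $m<n=\infty$ then $U/F_mU={\sf colim}_n\, F_nU/F_mU$ is a filtered colimit of flat modules, hence flat; the remaining cases $m=-1$ and $m=n$ reduce respectively to the flatness of $F_nU$ and to the zero module.

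The one genuinely substantive input is the first step — having the Poincar\'e--Birkhoff--Witt isomorphism available in the flat (rather than merely free) case, which is exactly what guarantees that the graded pieces $G_nU$ are the well-behaved modules $S^n\gg$ and not just quotients of them. Everything after that is a routine assembly of the standard closure properties of flat modules together with Lemma \ref{lemma_commuting_filt_colim}.
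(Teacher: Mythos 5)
Your proposal is correct and follows essentially the same route as the paper: invoke PBW for flat Lie algebras to identify the graded pieces with $S^n\gg$, show $S^n$ preserves flatness via Lemma \ref{lemma_commuting_filt_colim} and the Govorov--Lazard theorem, then assemble by extensions and filtered colimits. The only cosmetic difference is that the paper runs the induction directly on the quotients $F_nU/F_mU$ via $0\to F_n/F_m\to F_{n+1}/F_m\to S^{n+1}\gg\to 0$, whereas you first treat $F_nU$ and then handle the quotients; both are equivalent.
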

\begin{proof} Set $F_n=F_nU.$
Flat Lie  algebras satisfy PBW-property (see \cite[Th. 5.9]{grinberg2011poincare}, \cite{higgins1969baer}). Therefore we have a short exact sequence 
\begin{equation}\label{eq_ffs}
0\longrightarrow F_{n}/F_m \longrightarrow F_{n+1}/F_m \longrightarrow S^{n+1}\gg \longrightarrow 0    
\end{equation}
for any $-1 \leq m\leq n<\infty,$
where $S^n\gg$ is the symmetric power of $\gg.$ 

The functor $S^n$ sends free modules to free modules. Since it commutes with filtered colimits, and any flat module is a filtered colimit of free modules, we obtain that $S^n$ sends flat modules to flat modules. 
Therefore $S^n\gg$ is flat. 
Using the short exact sequence 
\eqref{eq_ffs} the fact that  $S^n\gg$ is flat and the fact  that an extension of flat modules is flat (\cite[Cor. 4.86]{lam2012lectures}), we obtain by induction that $F_n/F_m$ is flat for $n<\infty$. For $n=\infty$ we obtain this using the formula  $F_\infty/F_m=\varinjlim F_n/F_m.$ 
\end{proof}

\begin{lemma}\label{lemma_F_nU_pure} Under assumptions of Theorem \ref{theorem_CE_rel} $F_nU$ is a pure submodule of $U.$
\end{lemma}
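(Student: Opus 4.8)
The statement to prove is that $F_nU$ is a pure submodule of $U$ under either hypothesis of Theorem~\ref{theorem_CE_rel}: $\kk$ a principal ideal domain, or $\gg$ flat over $\kk$.

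In the flat case the plan is immediate: by Lemma~\ref{lemma_flatU}, the quotient $U/F_nU = F_\infty U/F_nU$ is flat over $\kk$, and a short exact sequence of $\kk$-modules with flat cokernel is pure exact (this is the standard fact that $M'\mono M\epi M''$ with $M''$ flat stays exact after $-\otimes_\kk N$; cf. the proof of Proposition~\ref{prop_tor_rel-ord} and \cite[Cor.~4.86]{lam2012lectures}). Hence $F_nU\mono U$ is pure in this case.

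For the principal ideal domain case one cannot argue via flatness of the quotient, since $U/F_nU$ need not be flat. Instead the plan is to use the graded pieces together with the Poincar\'e--Birkhoff--Witt theorem, which holds over any PID because a PID is hereditary (so $\gg$, being a submodule of the free module $U\gg$... more precisely, PBW holds over PIDs by \cite{higgins1969baer}; indeed it holds whenever $\gg$ is projective, and over a PID submodules of free modules are free, but $\gg$ itself need not be projective — the correct reference is that PBW holds over any PID, see \cite{cartan1999homological}). Granting PBW, we have short exact sequences $0\to F_{m}U/F_{m-1}U\to F_{m+1}U/F_{m-1}U\to S^{m+1}\gg\to 0$ with $G_{m+1}U\cong S^{m+1}\gg$. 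Over a PID, to check that $F_nU\mono U$ is pure it suffices to check it is pure after tensoring with each cyclic module $\kk/(a)$, i.e.\ that the map $F_nU/aF_nU\to U/aU$ is injective, equivalently that $aU\cap F_nU = aF_nU$. The plan is to prove this by induction up the filtration: one shows $aF_{m+1}U\cap F_mU = aF_mU$ using that the graded piece $G_{m+1}U\cong S^{m+1}\gg$ is torsion-free-enough — more carefully, one reduces to showing that multiplication by $a$ has no "new" kernel or cokernel interaction across filtration steps, which follows from the short exact sequences above by a diagram chase once one knows $S^n\gg$ has no extra torsion relations, i.e.\ from the long exact sequence in $\mathrm{Tor}^\kk(-,\kk/(a))$ applied to \eqref{eq_ffs} combined with the Koszul acyclicity Theorem~\ref{th_kos}(1) relating $S^n$ and $\Lambda^n$. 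An alternative and probably cleaner route: present $\gg$ as a filtered colimit of its finitely generated submodules $\gg_i$, each of which (being f.g.\ over a PID) is a finite direct sum of cyclic modules; by Lemma~\ref{lemma_commuting_filt_colim} and the compatibility of $U(-)$ and the filtration with filtered colimits, purity is preserved under filtered colimits, so one reduces to $\gg$ a finite sum of cyclics, handles a single cyclic summand by hand, and uses Lemma~\ref{lemma_direct_summ_Kos}-style multiplicativity of the filtration under direct sums of Lie algebras (here one must be careful: $U(\gg\oplus\gg') = U\gg\otimes U\gg'$ for an abelian-type decomposition, and the filtration is the tensor-product filtration).

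The main obstacle is the PID case: specifically, pinning down why $aU\cap F_nU = aF_nU$, which amounts to showing there is no torsion "hidden between filtration levels." The cleanest resolution is the reduction-to-cyclics argument, but it requires care about how $U\gg$ and its filtration behave under the decomposition $\gg = \bigoplus \kk/I_j$ (as an abelian Lie algebra this is fine; for a general f.g.\ Lie algebra over a PID one does not have such a splitting as Lie algebras). The honest statement needed is weaker than a Lie-algebra splitting: one only needs, level by level, that $G_nU \cong S^n\gg$ (PBW) and that the Koszul relations force $S^n$ of a finite sum of cyclics to decompose compatibly, which is exactly what Theorem~\ref{th_kos}(1) and its proof via Proposition~\ref{prop_kos} provide. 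So the real content is: PBW over a PID gives the associated graded as $S\gg$, and Theorem~\ref{th_kos}(1) controls $S\gg$ well enough (pure acyclicity of all $\mathrm{Kos}_\bullet(M,k)$) that each filtration inclusion $F_nU\mono F_{n+1}U$ is pure; purity then composes and passes to the colimit $F_\infty U = U$.
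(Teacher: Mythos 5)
Your flat case is correct and coincides with the paper's: $U/F_nU$ is flat by Lemma~\ref{lemma_flatU}, so the sequence is pure exact. For the PID case, however, your proposal has a genuine gap. You correctly reduce to verifying that $(F_nU)\otimes\kk/a \to U\otimes\kk/a$ is injective for every $a\in\kk$, and you correctly sense that one should induct up the filtration, but you never supply an argument that closes the induction. Both routes you sketch are flawed. Route~B (filtered colimit over finitely generated submodules of $\gg$, reduce to cyclics) fails for exactly the reason you yourself raise: a finitely generated $\kk$-submodule of $\gg$ is not a Lie subalgebra, so $\gg$ is not a filtered colimit of Lie algebras whose underlying modules are sums of cyclics, and Lemma~\ref{lemma_Kos_direct_lim}/Lemma~\ref{lemma_direct_summ_Kos} do not apply to $U\gg$ or its filtration. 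Route~A is vague at precisely the crucial point: knowing $G_{n}U\cong S^{n}\gg$ (PBW) is \emph{not} enough, since purity of an inclusion is not determined by its quotient alone; from the short exact sequence \eqref{eq_ffs} one gets a connecting map $\operatorname{Tor}_1^\kk(S^{n+1}\gg,\kk/a)\to F_nU\otimes\kk/a$ whose vanishing you would need, and Theorem~\ref{th_kos}(1) does not give that. Indeed Theorem~\ref{th_kos} (pure acyclicity of the Koszul complex $\Lambda^\bullet\!\to S^\bullet$) is the tool the paper uses later, in Lemma~\ref{lemma_V_pure_acyclic}, to handle the Chevalley--Eilenberg resolution; it plays no role in the present lemma.

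The idea actually used by the paper, which is absent from your proposal, is a \emph{base-change} argument. One first reduces, via \cite[Lemma 26.1]{fuchs1970infinite} and the inductive hypothesis that $F_nU$ is pure in $U$, to showing that $F_{n+1}U/F_nU$ is pure in $U/F_nU$, i.e.\ that $(F_{n+1}U/F_nU)\otimes\kk/a\to(U/F_nU)\otimes\kk/a$ is injective. Then one uses the universal property of the enveloping algebra to identify $U\gg\otimes\kk/a\cong U^{\kk/a}(\gg\otimes\kk/a)$, and applies the Poincar\'e--Birkhoff--Witt theorem both over $\kk$ and over the quotient ring $\kk/a$ to see that the natural map $GU\otimes\kk/a\to GU^{\kk/a}$ is an isomorphism. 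This in turn identifies $(F_kU/F_nU)\otimes\kk/a$ with $F_kU^{\kk/a}/F_nU^{\kk/a}$ for all $k>n$, from which the desired injectivity is immediate since it becomes the inclusion $F_{n+1}U^{\kk/a}/F_nU^{\kk/a}\hookrightarrow U^{\kk/a}/F_nU^{\kk/a}$. Without this base-change step your induction does not close.
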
 
\begin{proof} 
(1) Assume that $\kk$ is a principal ideal domain. The proof is by induction on $n$. 
Prove the base case $n=0.$ Since $U$ is an augmented algebra,  $F_0U=\kk$ is a direct summand of $U,$ and hence,  it is a pure subgroup of $U.$  Assume that $F_nU$ is pure in $U$ and prove that $F_{n+1}U$ is pure in $U.$ By \cite[Lemma 26.1]{fuchs1970infinite}, it is enough to prove that $F_{n+1}U/F_nU$ is pure in $U/F_nU.$ Since $\kk$ is a principal ideal domain, it is enough to prove that the map $(F_{n+1}U/F_nU)\otimes \kk/a\to (U/F_nU)\otimes \kk/a$ is injective.

Let $a\in \kk.$  For a Lie algebra $\mathfrak{h}$ over $\kk/a$ we denote by $U^{\kk/a}(\mathfrak{h})$ the universal enveloping algebra over $\kk/a.$ By the universal property of the enveloping algebra we obtain $U\gg\otimes \kk/a\cong U^{\kk/a}(\gg\otimes \kk/a).$ Set $U^{\kk/a}=U^{\kk/a}(\gg\otimes \kk/a),$  $F_nU^{\kk/a}=F_nU^{\kk/a}(\gg\otimes \kk/a)$ and $GU^{\kk/a}:=GU^{\kk/a}(\gg\otimes \kk/a).$ We also denote by $S_{\kk/a}(-)$ the symmetric algebra over $\kk/a.$ Consider the following commutative diagram
$$
\begin{tikzcd}
S\gg\otimes \kk/a\arrow[r] \arrow[d] & GU\otimes \kk/a \arrow[d] \\
S_{\kk/a}(\gg\otimes \kk/a)\arrow[r] & GU^{\kk/a}.
\end{tikzcd}
$$
The left hand vertical map is obviously an isomorphism. The horizontal maps are isomorphisms by the Poincar\'e-Birkhoff-Witt theorem over $\kk$ and over $\kk/a$. Therefore the map $GU\otimes \kk/a\to GU^{\kk/a} $ is an isomorphism of graded algebras. 
By induction this implies that for any $k>n$ the map  $(F_{k}U/F_nU)\otimes \kk/a \to F_kU^{\kk/a}/F_nU^{\kk/a} $ is an isomorphism. Therefore the map $(U/F_nU)\otimes \kk/a \to U^{\kk/a}/F_nU^{\kk/a} $ is an isomorphism.  It follows that the map $(F_{n+1}U/F_nU)\otimes \kk/a\to (U/F_nU)\otimes \kk/a$  is injective because it is isomorphic to the map $F_{n+1}U^{\kk/a}/F_nU^{\kk/a} \to U^{\kk/a}/F_nU^{\kk/a}$. Hence $F_{n+1}U/F_nU$ is pure in $U/F_nU.$

(2) Assume that $\gg$ is flat. Then by Lemma \ref{lemma_flatU} we obtain that $U/F_nU$ is flat. It follows that the sequence $0\to F_nU \to U \to U/F_nU\to 0$ is pure exact. 
\end{proof}

Further we consider the following complex $$V_\bullet:={\sf CE}_\bullet(\gg, U\gg)$$
that we call the Chevalley-Eilenberg resolution. 
Since $F_kU$ is a pure subgroup of $U,$ the map $F_kU\otimes \Lambda^n \gg\to U \otimes \Lambda^n \gg$ is injective. Hence we can identify $F_kU\otimes \Lambda^n \gg$ with its image in $U\otimes \Lambda^n \gg:$
$F_kU\otimes \Lambda^n \gg \subseteq U\otimes \Lambda^n\gg.
$
We set $$F_kV_n:=F_{k-n}U\otimes \Lambda^n\gg \subseteq U\otimes \Lambda^n \gg.$$

\begin{lemma} Under assumptions of Theorem \ref{theorem_CE_rel} 
the submodules $F_kV_n$ define a filtration $F_kV_\bullet$ by subcomplexes on the complex $V_\bullet.$ 
\end{lemma}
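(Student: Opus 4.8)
The plan is to check the two things required of a filtration by subcomplexes: that for each fixed $n$ the modules $F_kV_n$ form an increasing, exhaustive and bounded-below chain of submodules of $V_n$, and that the Chevalley--Eilenberg differential $\partial\colon V_n\to V_{n-1}$ sends $F_kV_n$ into $F_kV_{n-1}$.

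For the first point, recall that by Lemma \ref{lemma_F_nU_pure} each $F_jU$ is a pure submodule of $U$ under either hypothesis of Theorem \ref{theorem_CE_rel}, so tensoring $0\to F_jU\to U\to U/F_jU\to 0$ with $\Lambda^n\gg$ stays exact and the identification $F_kV_n=F_{k-n}U\otimes\Lambda^n\gg\subseteq U\otimes\Lambda^n\gg$ used in the statement is legitimate; moreover the inclusion $F_{k-n}U\subseteq F_{k-n+1}U$ inside $U$ induces the inclusion $F_kV_n\subseteq F_{k+1}V_n$ inside $U\otimes\Lambda^n\gg$. Since $\bigcup_j F_jU=U$ we get $\bigcup_k F_kV_n=V_n$, and since $F_{k-n}U=0$ for $k<n$ we get $F_kV_n=0$ in that range, so the filtration is exhaustive and bounded below (which will matter for the associated spectral sequence later).

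For the second point I would take $m\in F_{k-n}U$ and $x_1,\dots,x_n\in\gg$ and split $\partial(m\otimes x_1\wedge\dots\wedge x_n)$ into its two sums. A term of the first sum is $\pm\,mx_i\otimes(x_1\wedge\dots\wedge\hat x_i\wedge\dots\wedge x_n)$; the image of $\gg$ lies in $F_1U$, so multiplicativity of the PBW filtration gives $mx_i\in F_{k-n}U\cdot F_1U\subseteq F_{k-n+1}U$, and the term lies in $F_{k-n+1}U\otimes\Lambda^{n-1}\gg=F_kV_{n-1}$. A term of the second sum is $\pm\,m\otimes([x_i,x_j]\wedge x_1\wedge\dots\wedge\hat x_i\wedge\dots\wedge\hat x_j\wedge\dots\wedge x_n)$; since $[x_i,x_j]\in\gg$ the wedge factor lies in $\Lambda^{n-1}\gg$, and $m\in F_{k-n}U\subseteq F_{k-n+1}U$, so again the term lies in $F_kV_{n-1}$. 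Hence $\partial(F_kV_n)\subseteq F_kV_{n-1}$, as required.

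The only place the hypotheses of Theorem \ref{theorem_CE_rel} are needed is the purity of $F_jU$ in $U$ from Lemma \ref{lemma_F_nU_pure}, which is what makes the $F_kV_n$ honest submodules of $U\otimes\Lambda^n\gg$; granting that, the compatibility with $\partial$ is a formal consequence of the multiplicative property $F_aU\cdot F_bU\subseteq F_{a+b}U$ of the standard filtration, so I do not expect any real obstacle here.
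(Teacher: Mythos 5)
Your proof is correct and follows essentially the same route as the paper: purity of $F_{k-n}U$ in $U$ (Lemma \ref{lemma_F_nU_pure}) justifies viewing $F_kV_n$ as a submodule of $U\otimes\Lambda^n\gg$, and the term-by-term check that $\partial(F_{k-n}U\otimes\Lambda^n\gg)\subseteq F_{k-n+1}U\otimes\Lambda^{n-1}\gg=F_kV_{n-1}$ is exactly the paper's argument. The extra remarks on exhaustiveness and boundedness below are harmless additions.
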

\begin{proof}
Note that if
$u\otimes x_1\wedge \dots \wedge x_n\in F_kU\otimes \Lambda^n\gg,$ then $ux_i\otimes x_1\wedge \dots \wedge \hat x_i \wedge \dots \wedge x_n$ and $u\otimes [x_i,x_j]\wedge x_1\wedge \dots \wedge \hat x_i \wedge \dots \wedge \hat x_j \wedge \dots \wedge x_n $ are in $F_{k+1}U\otimes \Lambda^{n-1}.$ Therefore $\partial( F_kU\otimes \Lambda^n \gg )\subseteq F_{k+1}U \otimes \Lambda^{n-1}\gg.$ 
\end{proof}

\begin{lemma}\label{lemma_F_kV_pure} Under assumptions of Theorem \ref{theorem_CE_rel} the group $F_kV_n$ is pure in $V_n.$
\end{lemma}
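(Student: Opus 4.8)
The plan is to derive the purity of $F_kV_n$ in $V_n$ directly from the purity of $F_{k-n}U$ in $U$, which is already established in Lemma~\ref{lemma_F_nU_pure}, simply by tensoring the relevant short exact sequence with $\Lambda^n\gg$ over $\kk$. The only auxiliary fact needed beyond that lemma is the elementary observation that tensoring a pure exact sequence of $\kk$-modules with an arbitrary $\kk$-module yields again a pure exact sequence.

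First I would record this observation. Let $E\colon 0\to M'\to M\to M''\to 0$ be a pure exact sequence of $\kk$-modules and let $A$ be an arbitrary $\kk$-module. By the characterization of pure exactness (see \cite[Th.~4.89]{lam2012lectures}, condition~(1) in the definition at the start of this section), $E\otimes_\kk A$ is in particular a short exact sequence, and for every $\kk$-module $N$ the associativity isomorphism $(E\otimes_\kk A)\otimes_\kk N\cong E\otimes_\kk(A\otimes_\kk N)$ together with pure exactness of $E$ shows that $(E\otimes_\kk A)\otimes_\kk N$ is exact. Hence $E\otimes_\kk A$ is pure exact; in particular the inclusion $M'\otimes_\kk A\hookrightarrow M\otimes_\kk A$ is a pure monomorphism.

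Second, I would apply this with $M'=F_{k-n}U$, $M=U$ — the sequence $0\to F_{k-n}U\to U\to U/F_{k-n}U\to 0$ being pure exact by Lemma~\ref{lemma_F_nU_pure} under either of the assumptions of Theorem~\ref{theorem_CE_rel} — and with $A=\Lambda^n\gg$. This yields that $F_{k-n}U\otimes_\kk\Lambda^n\gg$ is pure in $U\otimes_\kk\Lambda^n\gg$, which, under the identifications $F_kV_n=F_{k-n}U\otimes\Lambda^n\gg$ and $V_n=U\otimes\Lambda^n\gg$ adopted just before the lemma, is exactly the assertion. Note that the argument is uniform in the two cases: the case distinction has already been absorbed into Lemma~\ref{lemma_F_nU_pure}.

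I do not expect a genuine obstacle here; the substance of the statement lies entirely in the earlier Lemma~\ref{lemma_F_nU_pure}, and the present step is formal. The single point worth spelling out in the write-up is that one must first know $E\otimes_\kk\Lambda^n\gg$ remains a short exact sequence — so that the inclusion $F_kV_n\hookrightarrow V_n$ makes sense as written — and this injectivity is precisely what pure exactness of $E$ delivers.
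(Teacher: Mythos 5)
Your proof is correct and follows essentially the same route as the paper's own one-line argument: the paper also just invokes Lemma~\ref{lemma_F_nU_pure} and implicitly uses the fact that tensoring a pure exact sequence with an arbitrary $\kk$-module preserves pure exactness. You have merely made that implicit step explicit, which is fine.
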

\begin{proof}By Lemma \ref{lemma_F_nU_pure} the group $F_{k-n}U$ is pure in $U$. Hence $F_kV_n$ is pure in $V_n.$
\end{proof}

The adjoint graded complex of the filtration $F_kV_\bullet$ is denoted by 
$$GV_\bullet:=\oplus_{k\geq 0}\: G_kV_\bullet,\hspace{1cm} G_kV_\bullet := F_kV_\bullet/F_{k-1}V_\bullet.$$

\begin{lemma}\label{lemma_kos_GV_iso} Under assumptions of Theorem \ref{theorem_CE_rel} the PBW-isomorphism $S \gg \cong GU$ induces an isomorphism of graded complexes 
$${\sf Kos}_\bullet(\gg)\cong GV_\bullet.
$$ 
\end{lemma}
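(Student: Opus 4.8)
The plan is to prove the isomorphism one weight at a time: for each $k\geq 0$ I would exhibit an isomorphism of chain complexes $G_kV_\bullet\cong {\sf Kos}_\bullet(\gg,k)$ induced by the PBW-isomorphism $S\gg\cong GU$, and then observe that these assemble into an isomorphism $GV_\bullet\cong {\sf Kos}_\bullet(\gg)$ of graded complexes.

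First I would pin down the underlying graded modules. By construction $F_kV_n=F_{k-n}U\otimes\Lambda^n\gg$ sits inside $U\otimes\Lambda^n\gg$, the inclusion being injective because $F_{k-n}U$ is pure in $U$ by Lemma~\ref{lemma_F_nU_pure}; likewise $F_{k-1}V_n=F_{k-n-1}U\otimes\Lambda^n\gg$. Since $F_{k-n-1}U\subseteq F_{k-n}U\subseteq U$ and $F_{k-n-1}U$ is pure in $U$, it is pure in $F_{k-n}U$ as well (purity of a submodule passes to any intermediate submodule), so tensoring the short exact sequence $0\to F_{k-n-1}U\to F_{k-n}U\to G_{k-n}U\to 0$ with $\Lambda^n\gg$ preserves exactness. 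This gives $G_kV_n=F_kV_n/F_{k-1}V_n\cong G_{k-n}U\otimes\Lambda^n\gg$, and applying the PBW-isomorphism in weight $k-n$ yields $G_kV_n\cong S^{k-n}\gg\otimes\Lambda^n\gg={\sf Kos}_n(\gg,k)$. Summing over $n$ and $k$ identifies the bigraded modules underlying $GV_\bullet$ and ${\sf Kos}_\bullet(\gg)$.

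Next I would check compatibility with the differentials. The differential of $V_\bullet={\sf CE}_\bullet(\gg,U\gg)$ on $u\otimes x_1\wedge\dots\wedge x_n$ splits into a multiplication part $\sum_i(-1)^{i+1}ux_i\otimes x_1\wedge\dots\wedge\hat x_i\wedge\dots\wedge x_n$ and a bracket part $\sum_{i<j}(-1)^{i+j}u\otimes[x_i,x_j]\wedge x_1\wedge\dots$. For $u\in F_{k-n}U$ the bracket part stays in $F_{k-n}U\otimes\Lambda^{n-1}\gg=F_{k-1}V_{n-1}$, hence vanishes in $G_kV_{n-1}$, whereas the multiplication part lies in $F_{k-n+1}U\otimes\Lambda^{n-1}\gg=F_kV_{n-1}$ and may represent a nonzero class. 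Thus the differential induced on $G_kV_\bullet$ sends the class of $\bar u\otimes x_1\wedge\dots\wedge x_n$ to $\sum_i(-1)^{i+1}\,\overline{ux_i}\otimes x_1\wedge\dots\wedge\hat x_i\wedge\dots\wedge x_n$. Because the PBW map $S\gg\to GU$ is an isomorphism of graded \emph{algebras}, it carries the multiplication $G_{k-n}U\otimes\gg\to G_{k-n+1}U$ to the symmetric multiplication $S^{k-n}\gg\otimes\gg\to S^{k-n+1}\gg$; so, under the module identification of the previous paragraph, the induced differential is precisely the Koszul differential of ${\sf Kos}_\bullet(\gg,k)$ (cf.\ Lemma~\ref{lemma_Kos(A)}). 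Naturality in $\gg$ is automatic since the filtration $F_\bullet V_\bullet$, the Chevalley--Eilenberg resolution and the PBW map are all natural.

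The only point requiring genuine care is the filtration bookkeeping, namely that $F_kV_n$ is assembled from $F_{k-n}U$, so that along $\partial$ the multiplication terms raise the $U$-filtration degree by one (landing in $F_kV_{n-1}$ but not in $F_{k-1}V_{n-1}$) while the bracket terms preserve it (landing in $F_{k-1}V_{n-1}$). Once this indexing is set up correctly, the argument is formal given Lemma~\ref{lemma_F_nU_pure} and the Poincar\'e--Birkhoff--Witt theorem, both of which hold under either hypothesis of Theorem~\ref{theorem_CE_rel}.
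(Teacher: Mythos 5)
Your proposal is correct and follows essentially the same route as the paper's proof: identify $G_kV_n\cong S^{k-n}\gg\otimes\Lambda^n\gg$ via the filtration and the PBW-isomorphism, then note that the bracket summand of the Chevalley--Eilenberg differential drops into $F_{k-1}V_{n-1}$ and hence dies in the associated graded, while the multiplication summand becomes the Koszul differential. Your extra care with purity and the algebra property of the PBW map only makes explicit what the paper leaves implicit.
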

\begin{proof} First we remind that under assumption of Theorem \ref{theorem_CE_rel} the PBW-property is satisfied (see \cite[Th. 5.9]{grinberg2011poincare}, \cite{higgins1969baer}) and we have an isomorphism $S\gg\cong GU$.  By definition of the filtration $F_kV_\bullet$ we have 
$$F_kV_n/F_{k-1}V_n=(F_{k-n}U\otimes \Lambda^n\gg)/(F_{k-1-n} U \otimes \Lambda^n\gg)\cong $$
$$\cong (F_{k-n}U/F_{k-n-1}U) \otimes \Lambda^n\gg\cong S^{k-n}\gg \otimes \Lambda^n\gg.$$ 
Hence the isomorphism $S \gg \cong GU$ induces an isomorphism of components of these complexes. We need to check that this isomorphism is compatible with the differentials. The differential of $V_\bullet$ consists of two summands. Note that if $u\otimes x_1\wedge \dots \wedge x_n \in F_kV_n,$ then $u\otimes [x_i,x_j]\wedge x_1\wedge \dots \wedge \hat x_i \wedge \dots \wedge \hat x_j \wedge \dots \wedge x_n\in F_{k-1} V_{n-1}.$ It follows that in the quotient complex $G_kV_\bullet=F_kV_\bullet/F_{k-1}V_\bullet$ the second summand vanishes. The first summand of the differential of the complex $V_\bullet$ obviously corresponds to the differential of ${\sf Kos}_\bullet(\gg).$ The assertion follows. 
\end{proof}

Denote by $V^{\sf a}_\bullet$ the augmented Chevalley-Eilenberg resolution $V_\bullet,$ where  $V^{\sf a}_{-1}=\kk$ and $V^{\sf a}_0\to V^{\sf a}_{-1}$ is the augmentation $U\to \kk.$

\begin{lemma}\label{lemma_V_pure_acyclic} Under assumptions of Theorem \ref{theorem_CE_rel} the complex $V^{\sf a}_\bullet$ is pure acyclic. 
\end{lemma}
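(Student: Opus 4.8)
The plan is to push the filtration $F_\bullet V_\bullet$ of the Chevalley–Eilenberg resolution onto the augmented complex and then lift pure acyclicity from the associated graded complex, which by Lemma~\ref{lemma_kos_GV_iso} is built out of the Koszul complexes controlled by Theorem~\ref{th_kos}. First I would extend the filtration to $V^{\sf a}_\bullet$ by declaring $F_kV^{\sf a}_{-1}=\kk$ for $k\ge 0$ and $F_{-1}V^{\sf a}_{-1}=0$, keeping $F_kV^{\sf a}_n=F_{k-n}U\otimes\Lambda^n\gg$ for $n\ge 0$. This filtration is exhaustive: $\bigcup_k F_kV^{\sf a}_n=V^{\sf a}_n$ for every $n$, since $\bigcup_k F_{k-n}U=U$ when $n\ge 0$, and trivially when $n=-1$. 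Its associated graded $G_kV^{\sf a}_\bullet=F_kV^{\sf a}_\bullet/F_{k-1}V^{\sf a}_\bullet$ is, for $k\ge 1$, isomorphic to ${\sf Kos}_\bullet(\gg,k)$ by Lemma~\ref{lemma_kos_GV_iso}, hence pure acyclic by Theorem~\ref{th_kos} under either hypothesis of Theorem~\ref{theorem_CE_rel}; and $G_0V^{\sf a}_\bullet=F_0V^{\sf a}_\bullet$ is the two-term complex $\kk\xrightarrow{{\sf id}}\kk$ in degrees $0$ and $-1$, which is contractible and therefore pure acyclic.

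Next I would verify that each short exact sequence of complexes $0\to F_{k-1}V^{\sf a}_\bullet\to F_kV^{\sf a}_\bullet\to G_kV^{\sf a}_\bullet\to 0$ is pure exact in every degree. By Lemma~\ref{lemma_F_kV_pure} (together with the trivial cases $n=-1$ or $k\le 0$) both $F_{k-1}V^{\sf a}_n$ and $F_kV^{\sf a}_n$ are pure submodules of $V^{\sf a}_n$; and whenever $A\subseteq B\subseteq C$ with $A$ and $B$ both pure in $C$, $A$ is automatically pure in $B$, because injectivity of $A\otimes_\kk N\to C\otimes_\kk N$ forces injectivity of $A\otimes_\kk N\to B\otimes_\kk N$. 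Hence $F_{k-1}V^{\sf a}_n$ is pure in $F_kV^{\sf a}_n$, i.e.\ the sequence is levelwise pure exact, as required to apply the pure $3\times 3$-lemma.

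I would then induct on $k$. The base case is $F_0V^{\sf a}_\bullet=G_0V^{\sf a}_\bullet$, pure acyclic as noted. For the inductive step, $F_{k-1}V^{\sf a}_\bullet$ is pure acyclic by hypothesis and $G_kV^{\sf a}_\bullet$ is pure acyclic, so Lemma~\ref{lemma_3x3} applied to the levelwise pure exact sequence above gives that $F_kV^{\sf a}_\bullet$ is pure acyclic. Finally, exhaustiveness of the filtration gives $V^{\sf a}_\bullet=\varinjlim_k F_kV^{\sf a}_\bullet$, a filtered colimit of pure acyclic complexes; since $-\otimes_\kk N$ commutes with filtered colimits and homology commutes with filtered colimits (filtered colimits of $\kk$-modules being exact), $H_*(V^{\sf a}_\bullet\otimes_\kk N)=\varinjlim_k H_*(F_kV^{\sf a}_\bullet\otimes_\kk N)=0$ for every $\kk$-module $N$, which is exactly pure acyclicity of $V^{\sf a}_\bullet$.

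The main thing to get right is purely organizational — arranging the filtration on the augmented complex so that its associated graded really is the augmented Koszul complex — after which the argument is formal: it uses only Theorem~\ref{th_kos}, the pure $3\times 3$-lemma, and the stability of pure acyclicity under filtered colimits, with no further manipulation of the Chevalley–Eilenberg differential beyond Lemma~\ref{lemma_kos_GV_iso}.
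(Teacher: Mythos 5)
Your proposal is correct and follows essentially the same route as the paper: filter the augmented complex so the associated graded pieces are ${\sf Kos}_\bullet(\gg,k)$ (plus the trivial piece $\kk\to\kk$), prove pure acyclicity of each $F_kV^{\sf a}_\bullet$ by induction via Theorem \ref{th_kos}, Lemma \ref{lemma_F_kV_pure} and the pure $3\times 3$-lemma, and pass to the filtered colimit. The only differences are expository (you spell out the purity of $F_{k-1}V^{\sf a}_n$ in $F_kV^{\sf a}_n$ and the colimit step in more detail).
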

\begin{proof} Denote by $F_kV^{\sf a}_\bullet$ the augmented complexes $F_kV_\bullet,$ where  $F_kV_{-1}=\kk.$
First we prove that $F_kV_\bullet^{\sf a}$ is pure acyclic for each $k\geq 0.$ The proof is by induction. For $k=0$ the complex $F_0V^{\sf a}_\bullet$ is just the complex $0\to \kk\to \kk\to 0$ concentrated in degrees $0,-1.$ Hence, it is pure acyclic. Assume that $F_kV^{\sf a}_\bullet$ is pure acyclic and prove that $F_{k+1}V^{\sf a}_\bullet$ is pure acyclic. Lemma \ref{lemma_kos_GV_iso} implies that there is a short exact sequence
$$0 \longrightarrow F_kV^{\sf a}_\bullet \longrightarrow F_{k+1}V^{\sf a}_\bullet \longrightarrow {\sf Kos}_\bullet(\gg,k+1) \longrightarrow 0.$$
Theorem \ref{th_kos} implies that ${\sf Kos}_\bullet(\gg,k+1)$ is pure acyclic. By Lemma \ref{lemma_F_kV_pure} the subgroup $F_kV_n$ is pure in $V_n,$ and hence, it is pure in $F_{k+1}V_n.$ Then Lemma \ref{lemma_3x3} implies that $F_{k+1}V_\bullet'$ is pure acyclic. 

So, we proved that $F_kV^{\sf a}_\bullet$ is pure acyclic for any $k\geq 0.$ Since $V^{\sf a}_\bullet$ is a direct limit of $F_kV^{\sf a}_\bullet$ and the class of pure acyclic complexes is closed with respect to direct limits, $V_\bullet^{\sf a}$ is pure acyclic. 
\end{proof}

\begin{proof}[Proof of Theorem \ref{theorem_CE_rel}] The modules $U\otimes \Lambda^n \gg$ are $(U,\kk)$-projective. Then Lemma \ref{lemma_V_pure_acyclic} implies that $V_\bullet$ is a $\kk$-pure $(U,\kk)$-projective resolution. Then assertion follows from Theorem \ref{theorem_pure_rel_gen} and the obvious isomorphism $M\otimes_U V_\bullet\cong {\sf CE}_\bullet(\gg,M)$.
\end{proof}

\section{Relative simplicial homology}

Relative simplicial homology of a Lie algebra $\gg$ over a commutative ring $\kk$ is defined as comonad derived functors of the functor of abelianization ${\sf ab}: {\sf Lie}\to {\sf Mod}$ with respect to the comonad $\mathcal G^{\sf M}$ corresponding to the free forgetful adjunction with the category of modules $ \FF^{\sf M}:{\sf Mod} \leftrightarrows {\sf Lie}: \UU^{\sf M}.$
$$H^{\sf RS}_{*+1}=L_*^{\GG^{\sf M}}{\sf ab}$$
The aim of this section is to prove that over principal ideal domains relative simplicial homology is isomorphic to  the homology of Chevalley-Eilenberg complexes (and hence with relative Tor homology by Theorem \ref{theorem_CE_rel}), and show that for general rings it is not true. In this section we use the following simplified notation 
$$\FF(M):=\FF^{\sf M}(M).$$

\begin{lemma}\label{lemma_RT-hom_of_free_rel}
For any commutative ring $\kk$ and a module $M$ over $\kk$ the higher relative Tor and simplicial homology of $\FF(M)$ vanishes:
$$H^{\sf RT}_n(\FF(M))=0, \hspace{1cm} H^{\sf RS}_n(\FF(M))=0,  \hspace{1cm} n\geq 2.$$
\end{lemma}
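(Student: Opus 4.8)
The plan is to prove the two vanishing statements by separate arguments, both exploiting that $\FF(M)=\FF^{\sf M}(M)$ is a ``free'' object — in the associative‑algebra sense for the first statement, and in the comonad sense for the second.

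For the relative Tor homology, recall that the universal enveloping algebra of the free Lie algebra on a module is the tensor algebra: $U\FF(M)\cong T(M):=\bigoplus_{k\geq 0}M^{\otimes k}$, because $U\FF^{\sf M}$ and $T$ are both left adjoint to the underlying‑module functor ${\sf AssAlg}\to{\sf Mod}$ (so no form of the Poincar\'e–Birkhoff–Witt theorem is needed). Writing $I=\bigoplus_{k\geq 1}M^{\otimes k}\subseteq T(M)$ for the augmentation ideal, multiplication gives an isomorphism of left $T(M)$‑modules $T(M)\otimes_\kk M\cong I$, hence a short exact sequence of left $T(M)$‑modules
\[
0\longrightarrow T(M)\otimes_\kk M\longrightarrow T(M)\longrightarrow\kk\longrightarrow 0 .
\]
Both non‑trivial terms on the left are $(T(M),\kk)$‑projective (being extended from the $\kk$‑modules $M$ and $\kk$), and the sequence is $\kk$‑split because $\kk=M^{\otimes 0}$ is a direct summand of $T(M)$; thus it is a $\kk$‑split $(U\FF(M),\kk)$‑projective resolution of $\kk$ of length $1$. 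Consequently $H^{\sf RT}_n(\FF(M))={\sf Tor}^{(U\FF(M),\kk)}_n(\kk,\kk)=0$ for all $n\geq 2$ and every commutative ring $\kk$; tensoring the resolution with $\kk$ over $T(M)$ one also reads off $H^{\sf RT}_1(\FF(M))\cong M$.

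For the relative simplicial homology I must show $L^{\GG^{\sf M}}_m{\sf ab}(\FF(M))=0$ for $m\geq 1$. The object $\FF(M)=\FF^{\sf M}(M)$ is $\GG^{\sf M}$‑projective: the triangle identity $\varepsilon_{\FF^{\sf M}M}\circ\FF^{\sf M}(\eta_M)={\sf id}$ exhibits it as a retract of $\GG^{\sf M}(\FF^{\sf M}M)$ (equivalently, $\FF^{\sf M}(M)=\GG^{\sf M}(\mathfrak{a})$ where $\mathfrak{a}$ is the abelian Lie algebra with underlying module $M$). I would then appeal to the basic acyclicity property of comonad homology: for any comonad $\GG$, any functor $\Phi$ to an abelian category, and any $\GG$‑projective object $C$, one has $L^{\GG}_m\Phi(C)=0$ for $m\geq 1$ and $L^{\GG}_0\Phi(C)\cong\Phi(C)$ (\cite{barr1969homology}). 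The mechanism is that ${\sf Bar}(\GG)(\GG D)\to\GG D$ admits a simplicial contraction built from the comultiplication $\delta$ (the contraction identities being verified from naturality and coassociativity of $\delta$ together with the counit laws, in the same manner as the extra‑degeneracy lemma of Section~1); a simplicial contraction is preserved by the functor $\Phi$ and induces a chain contraction on the associated chain complex, and homology of a retract is a retract of homology, so $L^{\GG}_m\Phi(C)=H_m(\Phi\,{\sf Bar}(\GG)(C))$ is concentrated in degree $0$. Taking $\GG=\GG^{\sf M}$ and $\Phi={\sf ab}$ yields $L^{\GG^{\sf M}}_m{\sf ab}(\FF(M))=0$ for $m\geq 1$, that is, $H^{\sf RS}_n(\FF(M))=0$ for $n\geq 2$.

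The first part is routine module‑theoretic bookkeeping. The only genuinely delicate ingredient is the comonad‑homological acyclicity of $\GG$‑projectives for a possibly non‑additive functor $\Phi$: one has to produce (or cite) the explicit simplicial contraction of ${\sf Bar}(\GG)(\GG D)$, and the correct contraction is a little more subtle than the naive guess $s_{-1}=\delta$. This is the point I expect to require the most care; everything else then follows from the computations above.
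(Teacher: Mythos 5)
Your argument is correct. The relative Tor half is essentially the paper's own proof: identify $U\FF(M)\cong T(M)$ by composing adjunctions, observe that the augmentation ideal is $T(M)\otimes_\kk M$ (the paper writes it as $M\otimes T(M)$, a left/right convention only), and conclude that $0\to T(M)\otimes_\kk M\to T(M)\to\kk\to 0$ is a $\kk$-split $(T(M),\kk)$-projective resolution of length one, killing $H^{\sf RT}_n$ for $n\geq 2$. For the relative simplicial half you take a genuinely different (though equally standard) route: the paper invokes the comparison of comonad derived functors with Tierney--Vogel derived functors \cite[Th. 3.1]{tierney1969simplicial} relative to the projective class of Lie algebras whose counit $\varepsilon:\GG^{\sf M}(\gg)\to\gg$ splits, and notes $\FF(M)$ lies in that class; you instead prove the vanishing on $\GG$-projectives directly, by exhibiting a simplicial contraction of ${\sf Bar}(\GG^{\sf M})$ evaluated at a cofree object (extra degeneracies built from $\delta$, as in \cite{barr1969homology}), observing that such a contraction lives in ${\sf Lie}$ and is therefore preserved by the non-additive functor ${\sf ab}$, and finishing with the retract argument supplied by the triangle identity. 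Both proofs ultimately rest on the same fact --- the counit splits at $\FF(M)$ --- so neither buys more generality, but your version is self-contained modulo the contraction, whereas the paper outsources exactly that verification to Tierney--Vogel. Your caveat about the contraction is well taken and is the only delicate point: the naive $s_{-1}=\delta$ placed at the outer end fails the identity at the augmentation level, and one must use the comultiplication of the cofree object at the appropriate end (mirroring the identities checked in the paper's Lemma 1.2, which treats the related but distinct statement about $\UU\,{\sf Bar}(\GG)$); once that is either verified or cited, the proof is complete.
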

\begin{proof}
Since the composition of adjunctions is an adjunction, we have that the universal enveloping algebra of $\FF(M)$ is the tensor algebra 
$U(\FF(M)) \cong  T(M).$ 
The augmentation ideal of $T(M)$ is $\bigoplus_{i\geq 1} M^{\otimes i},$ and hence, $I(T(M))\cong M\otimes T(M).$ It follows that $I(T(M))$ is $(T(M),\kk)$-projective. Therefore $$\dots \to 0\to I(T(M))\to T(M)$$ is a $\kk$-split $(T(M),\kk)$-projective resolution of $\kk.$ Then $H^{\sf RT}_n(\FF(M))=0.$

Comonad derived functors with respect to the comonad $\GG^{\sf M}$ coincide with Tierney-Vogel derived functors, with respect to the projective class of Lie algebras $\gg$ such that the counit $\varepsilon: \GG^{\sf M}(\gg)\to \gg$ splits \cite[Th. 3.1]{tierney1969simplicial}. Since $\FF(M)$ is in the projective class, we obtain
$L_n^{\GG^{\sf M}}{\sf ab}(\FF(M))=0$ for $n\geq 1.$  
\end{proof}

\begin{theorem}\label{th_RS} Under assumption of Theorem \ref{theorem_CE_rel} there are natural isomorphisms
$$H^{\sf RS}_*(\gg)\cong H^{\sf CE}_*(\gg)\cong H^{\sf RT}_*(\gg).$$
\end{theorem}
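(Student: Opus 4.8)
The plan is to combine two facts. The second isomorphism, $H^{\sf CE}_*(\gg)\cong H^{\sf RT}_*(\gg)$, is nothing but Theorem \ref{theorem_CE_rel} with coefficients $M=\kk$, so the whole content is the first isomorphism $H^{\sf RS}_*(\gg)\cong H^{\sf CE}_*(\gg)$. To obtain it I would show that the shifted and truncated Chevalley--Eilenberg complex ${\sf CE}'_\bullet(\gg)$ is a \emph{computing complex} for $L_*^{\GG^{\sf M}}{\sf ab}$ with respect to the free--forgetful adjunction $\FF^{\sf M}:{\sf Mod}\leftrightarrows{\sf Lie}:\UU^{\sf M}$, and then apply Theorem \ref{th_computing}. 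Because ${\sf CE}_0(\gg)=\kk$ and the differential $\partial_1\colon{\sf CE}_1(\gg)\to{\sf CE}_0(\gg)$ is zero, we have ${\sf CE}'_0(\gg)=\UU^{\sf M}\gg$ and ${\sf CE}'_n(\gg)=\Lambda^{n+1}\UU^{\sf M}\gg$ for $n\geq 1$, whence $H_n({\sf CE}'_\bullet(\gg))=H^{\sf CE}_{n+1}(\gg)$ for every $n\geq 0$. Thus Theorem \ref{th_computing} would give natural isomorphisms $H^{\sf CE}_{n+1}(\gg)\cong L_n^{\GG^{\sf M}}{\sf ab}(\gg)=H^{\sf RS}_{n+1}(\gg)$, and together with the trivial degree-$0$ case (both sides equal $\kk$) this is exactly $H^{\sf RS}_*(\gg)\cong H^{\sf CE}_*(\gg)$.

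It then remains to check axioms (C1)--(C3) for ${\sf CE}'_\bullet$ relative to the comonad $\GG^{\sf M}$. Axiom (C1) holds since $H_0({\sf CE}'_\bullet(\gg))=H^{\sf CE}_1(\gg)={\sf ab}(\gg)$, exactly as in Proposition \ref{prop_computing_CE}. Axiom (C3) holds because each functor ${\sf CE}'_n$ factors through $\UU^{\sf M}$: it is $\Lambda^{n+1}$ for $n\geq 1$ and the identity functor for $n=0$. (This is precisely the point at which the present argument improves on Theorem \ref{th_CE}: there the adjunction is the set-based one $\FF^{\sf S}\dashv\UU^{\sf S}$, for which the exterior-power functors do \emph{not} factor through the forgetful functor, so (C3) fails and one only gets a spectral sequence; over ${\sf Mod}$ the functors $\Lambda^n$ do factor, and (C3) holds.) The entire weight of the proof therefore falls on axiom (C2): one must show that the higher Chevalley--Eilenberg homology of a free Lie algebra $\FF^{\sf M}(N)$, on the module $N=\UU^{\sf M}\gg$, vanishes, i.e. $H^{\sf CE}_m(\FF^{\sf M}(N))=0$ for $m\geq 2$.

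For this I would invoke Theorem \ref{theorem_CE_rel} once more, now applied to the Lie algebra $\FF^{\sf M}(N)$, together with Lemma \ref{lemma_RT-hom_of_free_rel}. If $\kk$ is a principal ideal domain, Theorem \ref{theorem_CE_rel}(1) applies to $\FF^{\sf M}(N)$ with no restriction and yields $H^{\sf CE}_*(\FF^{\sf M}(N))\cong H^{\sf RT}_*(\FF^{\sf M}(N))$, which vanishes in degrees $\geq 2$ by Lemma \ref{lemma_RT-hom_of_free_rel}. If instead $\gg$ is flat over $\kk$, then $N=\UU^{\sf M}\gg$ is a flat module, and I claim $\FF^{\sf M}(N)$ is again flat as a $\kk$-module: the free Lie algebra on a finitely generated free module is a free module (a Hall basis, obtained by base change from $\ZZ$), $N$ is a filtered colimit of finitely generated free modules, and $\FF^{\sf M}$, being a left adjoint, commutes with this colimit while $\UU^{\sf M}$ preserves filtered colimits, so $\FF^{\sf M}(N)$ is a filtered colimit of free modules and hence flat; then Theorem \ref{theorem_CE_rel}(2) applies to $\FF^{\sf M}(N)$ and Lemma \ref{lemma_RT-hom_of_free_rel} again finishes the verification of (C2). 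I expect this last step — in particular pinning down the flatness of $\FF^{\sf M}(\UU^{\sf M}\gg)$ in the flat case, and more generally recognising that the vanishing of higher $H^{\sf CE}$ on free Lie algebras is exactly what propagates the hypotheses of Theorem \ref{theorem_CE_rel} through the bar construction — to be the only genuinely non-formal point; once (C2) is established, Theorems \ref{th_computing} and \ref{theorem_CE_rel} assemble the statement, with all isomorphisms natural in $\gg$.
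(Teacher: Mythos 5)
Your proposal follows essentially the same route as the paper's proof: reduce to $H^{\sf RS}_*\cong H^{\sf CE}_*$ via Theorem \ref{theorem_CE_rel}, show that ${\sf CE}'_\bullet$ is a computing complex for $L_*^{\GG^{\sf M}}{\sf ab}$ with respect to the module-based adjunction, and verify (C2) on free Lie algebras $\FF^{\sf M}(N)$ by combining Theorem \ref{theorem_CE_rel} with Lemma \ref{lemma_RT-hom_of_free_rel}, including the observation that $\FF^{\sf M}$ sends flat modules to flat Lie algebras. The only presentational difference is that in the flat case the paper makes your pointwise check rigorous by restricting the adjunction and comonad to ${\sf FlatMod}\rightleftarrows{\sf FlatLie}$, so that (C2) holds at every stage $(\GG^{\sf M})^{j}(\gg)$ of the bar construction --- which is precisely what your flatness-preservation claim supplies.
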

\begin{proof}
By Theorem \ref{theorem_CE_rel} we know that $H^{\sf CE}_*(\gg)\cong H^{\sf RT}_*(\gg).$ So we only need to prove  $H^{\sf RS}_*(\gg)\cong H^{\sf CE}_*(\gg).$

(1) Assume that $\kk$ is a principal ideal domain. In order to prove the isomorphism $H^{\sf RS}_*(\gg)\cong H^{\sf CE}_*(\gg)$ we prove that the shifted version of the Chevalley-Eilenberg complex  ${\sf CE}_\bullet'(\gg)$ is a computing complex for $L^{\GG^{\sf M}}_*{\sf ab}:{\sf Lie}\to {\sf Mod}.$ (C1) is obvious. By Theorem \ref{theorem_CE_rel} and Lemma \ref{lemma_RT-hom_of_free_rel} we obtain $$H_n({\sf CE'}_\bullet(\FF(M)))= H^{\sf CE}_{n+1}(\FF(M))=H^{\sf RT}_{n+1}(\FF(M))=0 $$ for $n\geq 1$ and any $\kk$-module $M.$ This implies (C2). (C3) follows from the fact that ${\sf CE}'_n(\gg)=\Lambda^{n+1}\gg$ depends only on the underlying structure of $\kk$-module on $\gg.$

(2) Assume that $\gg$ is flat over $\kk$. Consider the categories ${\sf FlatLie}$ and ${\sf FlatMod}$ of flat Lie algebras and flat modules. Since $\FF:{\sf Mod}\to {\sf Lie}$ commutes with filtered colimits, sends free modules to Lie algebras which are free as modules, and that a flat module is a filtered colimit of free modules, we obtain that $\FF$ sends flat modules to flat Lie algebras. It follows that the free-forgetful adjunction $\FF : {\sf Mod} \rightleftarrows {\sf Lie}:\UU$ can be restricted to an adjunction $\bar \FF : {\sf FlatMod} \rightleftarrows {\sf FlatLie}: \bar \UU.$ Hence the comonad $\GG^{\sf M}$ on ${\sf Lie}$ can be restricted to a comonad $\bar \GG^{\sf M} $ on ${\sf FlatLie}$ such that $L^{\GG^{\sf M}}_*{\sf ab}(\gg)=L^{\bar \GG^{\sf M}}_*{\sf ab}(\gg)$ for any flat Lie algebra $\gg$. Similarly to the case (1) one can prove that ${\sf CE}_\bullet'$ is a computing complex for $L^{\bar \GG^{\sf M}}_*{\sf ab}:{\sf FlatLie}\to {\sf FlatMod}.$ The assertion follows.  
 \end{proof}
 
 We say that a Lie algebra $\gg$ over $\kk$ is flat, if it is flat as a module over $\kk.$ The aim of this section is to prove the following theorem. 

\begin{theorem}\label{th_flat}
Let $\kk$ be a commutative ring and $\gg$ be a flat Lie algebra over $\kk.$ Then all versions of homology that we consider are naturally isomorphic
$$H^{\sf S}_*(\gg) \cong H^{\sf T}_*(\gg)\cong H^{\sf CE}_*(\gg)\cong  H^{\sf RT}_*(\gg)\cong H^{\sf RS}_*(\gg).$$
\end{theorem}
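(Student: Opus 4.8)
The plan is to assemble the comparison theorems already established, using flatness of $\gg$ to discharge the hypotheses of the ones that need them, and then to collapse the Chevalley-Eilenberg spectral sequence. Since $\gg$ is flat over $\kk$, the hypotheses of Theorems~\ref{theorem_CE_rel} and~\ref{th_RS} (case~(2)) are satisfied, so those theorems already supply natural isomorphisms
$$H^{\sf RS}_*(\gg)\cong H^{\sf CE}_*(\gg)\cong H^{\sf RT}_*(\gg).$$
It therefore remains to splice $H^{\sf T}_*(\gg)$ and $H^{\sf S}_*(\gg)$ into this chain, which we would do through $H^{\sf RT}_*(\gg)$ and $H^{\sf CE}_*(\gg)$ respectively.

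For the link $H^{\sf T}_*(\gg)\cong H^{\sf RT}_*(\gg)$ we would invoke Proposition~\ref{prop_tor_rel-ord} with $R=U\gg$, $S=\kk$ and $M=N=\kk$. Its hypotheses hold because $U\gg$ is flat over $\kk$ by Lemma~\ref{lemma_flatU} and $\kk$ is flat over itself; hence
$$H^{\sf RT}_*(\gg)={\sf Tor}^{(U\gg,\kk)}_*(\kk,\kk)\cong{\sf Tor}^{U\gg}_*(\kk,\kk)=H^{\sf T}_*(\gg).$$

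For the link $H^{\sf S}_*(\gg)\cong H^{\sf CE}_*(\gg)$ we would run the Chevalley-Eilenberg spectral sequence of Theorem~\ref{th_CE}, $E^1_{i,j}=L_i\Lambda^j(\gg)\Rightarrow H^{\sf S}_{i+j}(\gg)$, and show that it degenerates. The key input is that $L_i\Lambda^j(\gg)=0$ for every $i>0$ when $\gg$ is flat: by the Govorov-Lazard theorem $\UU^{\sf M}\gg$ is a filtered colimit of finitely generated free $\kk$-modules, the derived functors $L_i\Lambda^j$ commute with filtered colimits by Lemma~\ref{lemma_commuting_filt_colim}, and $L_i\Lambda^j$ vanishes in positive degrees on a free module, which is its own resolution. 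Hence the $E^1$-page is concentrated on the line $i=0$, where by Theorem~\ref{th_CE} the differential is the one of $L_0{\sf CE}_\bullet(\gg)={\sf CE}_\bullet(\gg)$; so $E^2=E^\infty$ and
$$H^{\sf S}_n(\gg)\cong E^2_{0,n}=H^{\sf CE}_{n,0}(\gg)=H_n({\sf CE}_\bullet(\gg))=H^{\sf CE}_n(\gg).$$
Combining these isomorphisms with Theorem~\ref{th_RS} proves the theorem, naturality being inherited from the cited statements. Since every ingredient is already in hand, there is no serious obstacle; the step that needs care — and the only place where flatness of $\gg$ is genuinely used beyond Lemma~\ref{lemma_flatU} — is this collapse, i.e.\ the vanishing $L_i\Lambda^j(\gg)=0$ for $i>0$.
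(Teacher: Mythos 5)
Your proposal is correct and follows essentially the same route as the paper's proof: Theorem~\ref{th_RS}(2) and Proposition~\ref{prop_tor_rel-ord} (via Lemma~\ref{lemma_flatU}) give the last four isomorphisms, and the Chevalley--Eilenberg spectral sequence of Theorem~\ref{th_CE} collapses because $L_i\Lambda^j(\gg)=0$ for $i>0$ when $\gg$ is flat, by Govorov--Lazard plus Lemma~\ref{lemma_commuting_filt_colim}. The only difference is that you spell out the degeneration argument in slightly more detail than the paper does.
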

\begin{proof}
Theorem \ref{th_RS} implies that $H^{\sf CE}_*(\gg)\cong  H^{\sf RT}_*(\gg)\cong H^{\sf RS}_*(\gg).$ Lemma \ref{lemma_flatU} implies that $U\gg$ is flat.  Then by Proposition \ref{prop_tor_rel-ord} we obtain $H_*^{\sf RT}(\gg)\cong H_*^{\sf T}(\gg).$ So we only need to prove $H^{\sf S}_*(\gg)\cong H_*^{\sf CE}(\gg).$ By Theorem \ref{th_CE} it is enough to prove that $L_n\Lambda^m(\gg)=0$ for $n\ne 0.$ It follows from the fact that the underlying module of $\gg$ is a filtered colimit of free modules and the fact that $L_n\Lambda^m$ commutes with filtered colimits (Lemma \ref{lemma_commuting_filt_colim}). 
\end{proof}

\subsection{Example.} 

Further we construct an example that shows that the Theorem \ref{th_RS} does not hold for arbitrary commutative rings and arbitrary Lie algebras. 

\begin{proposition}\label{prop_counter_RSCE}
Let $\kk=\mathbb F_3[t_1,t_2,t_3,t_4]/(t_1^2,t_2^2,t_3^2,t_4^2),$ $M$ be a $\kk$-module generated by elements $e_1,e_2,e_3,e_4$ modulo relation $t_1e_1+t_2e_2+t_3e_4+t_4e_4=0$ $$M=\kk^4/\langle  t_1e_1+t_2e_2+t_3e_4+t_4e_4 \rangle$$
(as in Proposition \ref{prop_counter_Kos} for $n=p=3$) and $\gg=\FF(M)$ . Then 
$$H_3^{\sf CE}(\gg)\ne 0=H^{\sf RS}_3( \gg).$$
\end{proposition}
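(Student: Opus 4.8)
The statement splits into two parts. The vanishing $H^{\sf RS}_3(\gg)=0$ is immediate from Lemma~\ref{lemma_RT-hom_of_free_rel}: since $\gg=\FF(M)$ is a free Lie algebra on a module, $H^{\sf RS}_n(\FF(M))=0$ for all $n\ge 2$, in particular for $n=3$. Thus all the work is in showing $H^{\sf CE}_3(\gg)\neq 0$, and the plan is to isolate a single graded piece of the Chevalley--Eilenberg complex and recognise its top differential as the Koszul differential treated in Proposition~\ref{prop_counter_Kos}.

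First I would put the tensor-weight grading on everything. The enveloping algebra $U\FF(M)\cong T(M)$ is graded by tensor degree, so $\FF(M)$ inherits a grading $\FF(M)=\bigoplus_{w\ge 1}\FF_w(M)$ with $\FF_1(M)=M$ and $\FF_2(M)$ the image of $\Lambda^2 M\to M^{\otimes 2}$, $a\wedge b\mapsto a\otimes b-b\otimes a$. Since $\kk$ is an $\mathbb F_3$-algebra, $2\cdot 1_\kk$ is invertible, so by Theorem~\ref{th_kos}(3) the complex ${\sf Kos}_\bullet(M,2)$ is contractible; in particular that map is injective and $\FF_2(M)\cong\Lambda^2 M$. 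Hence ${\sf CE}_\bullet(\gg)=\Lambda^\bullet\gg$ is a weight-graded complex with weight-preserving differential, and counting weights in $\Lambda^n\big(\bigoplus_w\FF_w(M)\big)$ (degree $n$, weight $3$ forces the summands to have weights $(1,1,1)$, $(1,2)$ or $(3)$ for $n=3,2,1$ respectively, and is impossible for $n\ge 4$) shows that the weight-$3$ subcomplex is
\[
0\longrightarrow \Lambda^3 M\overset{d_3}\longrightarrow M\otimes\Lambda^2 M\overset{d_2}\longrightarrow \FF_3(M)\longrightarrow 0
\]
concentrated in homological degrees $3,2,1$. The crucial point is that the weight-$3$ part of ${\sf CE}_4(\gg)=\Lambda^4\gg$ vanishes, so the weight-$3$ component of $H^{\sf CE}_3(\gg)$ is exactly $\ker d_3$.

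Finally I would compute $d_3$. For $x_1,x_2,x_3\in M=\FF_1(M)$ one has $[x_i,x_j]=x_i\wedge x_j\in\Lambda^2 M\cong\FF_2(M)$, and, identifying the weight-$(1,2)$ summand of $\Lambda^2\gg$ with $M\otimes\Lambda^2 M$ via $a\wedge b\mapsto a\otimes b$, the Chevalley--Eilenberg formula gives
\[
d_3(x_1\wedge x_2\wedge x_3)=x_1\otimes(x_2\wedge x_3)-x_2\otimes(x_1\wedge x_3)+x_3\otimes(x_1\wedge x_2),
\]
which is precisely the first differential of ${\sf Kos}_\bullet(M,3)$. Hence $\ker d_3=\ker\big(\Lambda^3 M\to M\otimes\Lambda^2 M\big)$, and Proposition~\ref{prop_counter_Kos} with $n=p=3$ --- whose ring and module are exactly the present $\kk$ and $M$ --- shows this kernel contains the non-zero element $t_1e_1\wedge t_2e_2\wedge t_3e_3$. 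Therefore $H^{\sf CE}_3(\gg)\neq 0=H^{\sf RS}_3(\gg)$. The only delicate point is the weight bookkeeping: one must verify that in weight $3$ the complex ${\sf CE}_\bullet(\gg)$ is supported in degrees $\le 3$ (so the Koszul cycle genuinely survives to $H_3$) and track signs so that $d_3$ is literally the Koszul differential rather than a twist of it; beyond that there is no serious obstacle.
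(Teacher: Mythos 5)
Your overall strategy is the paper's: kill $H^{\sf RS}_3$ via Lemma \ref{lemma_RT-hom_of_free_rel}, then isolate the weight-three graded piece of ${\sf CE}_\bullet(\FF(M))$, identify its top differential with the differential $\Lambda^3M\to M\otimes\Lambda^2M$ of ${\sf Kos}_\bullet(M,3)$, and quote Proposition \ref{prop_counter_Kos}. Your explicit weight count showing that $\Lambda^4\gg$ has no weight-three part (so the Koszul cycle genuinely lives in $H_3$) makes precise a point the paper leaves implicit, and your formula for $d_3$ agrees, up to the sign convention which is irrelevant for the kernel, with the formula the paper takes from \cite[Prop. 2.2]{ivanov2020simple}.

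The one step that is not justified as written is the very first one: you let $\FF(M)$ ``inherit'' its grading from $U\FF(M)\cong T(M)$. That presupposes that the canonical map $\FF(M)\to T(M)$ is injective, i.e.\ a PBW-type property, which is exactly what can fail for non-flat modules over rings like this $\kk$ --- this is why the paper works with $\FF'(M)={\rm Im}(\FF(M)\to T(M))$ in Proposition \ref{prop_counter_H_2} and devotes the appendix to PBW failure. Your appeal to Theorem \ref{th_kos}(3) only yields injectivity of $\Lambda^2M\to M\otimes M$ (weight two) and says nothing about higher weights, so as written the graded object you analyse could be $\FF'(M)$ rather than $\FF(M)$, while the proposition concerns $\FF(M)$. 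The repair is the paper's route: the free Lie algebra carries an intrinsic grading by commutator weight, descended from the free non-associative algebra on $M$ because the alternating and Jacobi relations are homogeneous; under this grading $\FF(M)_1\cong M$ and $\FF(M)_2\cong\Lambda^2M$ over any commutative ring, with no injectivity needed, and the rest of your argument then goes through verbatim (the characteristic-three detour becomes unnecessary).
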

\begin{proof}  By Lemma \ref{lemma_RT-hom_of_free_rel} we have $H^{\sf RS}_3(\FF(M))=0$. Prove that $H_3^{\sf CE}(\FF(M))\ne 0.$  Note that there is a natural grading $\FF(M)=\bigoplus_{n\geq 1} \FF(M)_n,$ where $\FF(M)_n$ is generated as a $\kk$-module by commutators of weight $n$ (this grading comes from the obvious grading of the free non-associative algebra generated by $M$). It is easy to see that $\FF(M)_1\cong M$ and $\FF(M)_2\cong \Lambda^2 M, [m_1,m_2] \mapsto m_1\wedge m_2.$ The grading on the Lie algebra induces a natural grading  on ${\sf CE}_\bullet(\FF(M))$ such that its third component is the following complex
$$\dots \longrightarrow 0 \longrightarrow \Lambda^3M \overset{\partial_3}\longrightarrow M\otimes  \FF(M)_2 \longrightarrow \FF(M)_3,$$
where 
$$\partial_3(a_1\wedge a_2 \wedge a_3 )= - a_3\wedge  [a_1,a_2]+ a_2 \wedge [a_1,a_3] - a_1 \wedge  [a_2,a_3] $$
(see \cite[Prop. 2.2]{ivanov2020simple}). If we compose it with the isomorphism $\FF(M)_2\cong \Lambda^2M,$ we obtain that this map coincides with the map $\Lambda^3 M\to M\otimes \Lambda^2M$ from the Koszul complex ${\sf Kos}_\bullet(M,3).$ Then by Proposition \ref{prop_counter_Kos} we have ${\sf Ker}(\partial_3)\ne 0.$ It follows that $H^{\sf CE}_3(\FF(M))\ne 0.$ 
\end{proof}

\section{The second homology and Hopf's formula}

The first part of the following lemma is well known (see \cite[Exercise VI.19]{cartan1999homological}) but we add it for the sake of completeness. 

\begin{lemma}\label{lemma:tor(r/a,r/b)} Let $R$ be a ring,  $\mathfrak{a}$ be its right ideal  and $\mathfrak{b}$ be its left ideal. Then 
\begin{enumerate}
\item there are isomorphisms 
\begin{subequations}
\begin{align}
{\sf Tor}_1^R(R/\aa, R/\bb) &\cong \frac{\aa \cap \bb }{\aa\bb}, \\
{\sf Tor}_2^R(R/\mathfrak{a}, R/\mathfrak{b}) &\cong {\sf Ker}(\mathfrak{a}\otimes_R\mathfrak{b}\longrightarrow \mathfrak{a}\mathfrak{b}),\\
{\sf Tor}^R_n(R/\aa,R/\bb) &\cong {\sf Tor}^R_{n-2}(\aa,\bb), \hspace{1cm} n\geq 3,
\end{align}
\end{subequations}
where $\mathfrak{a}\otimes_R\mathfrak{b}\to \mathfrak{a}\mathfrak{b}$ is the multiplication map;
\item if $S$ is a subring of $R$ and the short exact sequences $$ 0\to \mathfrak{a} \to R \to R/\mathfrak{a}\to 0, \hspace{1cm} 0 \to \mathfrak{b} \to R \to R/\mathfrak{b}\to 0$$
are $S$-split, then there are  isomorphisms 
\begin{subequations}
\begin{align}
{\sf Tor}_1^{(R,S)}(R/\aa, R/\bb) &\cong {\sf Tor}_1^{R}(R/\aa, R/\bb),  \\
{\sf Tor}_2^{(R,S)}(R/\mathfrak{a}, R/\mathfrak{b}) &\cong {\sf Tor}_2^R(R/\mathfrak{a}, R/\mathfrak{b}), \label{eq:tor_2}\\ 
{\sf Tor}^{(R,S)}_n(R/\aa,R/\bb) &\cong {\sf Tor}^{(R,S)}_{n-2}(\aa,\bb), \hspace{1cm} n\geq 3.
\end{align}
\end{subequations}

\end{enumerate}
\end{lemma}

\begin{proof}
(1) Considering the long exact sequence for ${\sf Tor}^R_*(-,R/\mathfrak{b})$ associated with the short exact sequence $\aa \to R\to R/\aa,$ we obtain 
\begin{equation*}
\begin{split}
{\sf Tor}^R_1(R/\aa,R/\bb)&\cong {\sf Ker}( \aa\otimes_R R/\bb \to R\otimes_R R/\bb),\\
{\sf Tor}^R_n(R/\aa, R/\bb)&\cong {\sf Tor}^R_{n-1}(\aa,R/\bb), \hspace{1cm}n\geq 2.
\end{split}
\end{equation*}

Using that the map $ \aa \otimes_R R/\bb \to R\otimes_R R/\bb $ is isomorphic to the map $\aa/(\aa\bb)\to R/\bb,$ we obtain the isomorphism ${\sf Tor}_1^R(R/\aa,R/\bb)\cong (\aa\cap \bb)/\aa\bb.$ 
The long exact sequence for ${\sf Tor}^R_*(\aa,-)$ associated with the short exact sequence $\bb \to R \to R/\bb $ implies 
\begin{equation*}
\begin{split}
{\sf Tor}_1^R(\aa,R/\bb)&\cong {\sf Ker}(\aa\otimes_R\bb \longrightarrow \aa\bb),\\
{\sf Tor}_n^R(\aa,R/\bb)&\cong{\sf Tor}^R_{n-1}(\aa,\bb), \hspace{1cm} n\geq 2. 
\end{split}
\end{equation*}
The assertion follows. 

(2) The proof of the second part is very similar to the proof of the first part. We just need to replace ${\sf Tor}^R_*$ by ${\sf Tor}^{(R,S)}_*$ and use the long exact sequence associated with $S$-split short exact sequences 
\cite[Ch.IX,(8.6)]{maclane2012homology}.
\end{proof}

\begin{proposition}\label{prop:T-RT}
Let $\kk$ be a commutative ring and $\gg$ be a Lie algebra over $\kk.$ Then there is a natural isomorphism
$$H_2^{\sf T}(\gg)\cong H_2^{\sf RT}(\gg).$$
\end{proposition}
\begin{proof}
It follows from the isomorphism \eqref{eq:tor_2} if we take $R=U\gg, S=\kk$ and $\aa=\bb=I,$ where $I$ is the augmentation ideal of $U\gg.$
\end{proof}

\begin{proposition}\label{prop:hopfCE}
Let $\kk$ be a commutative ring and $\gg\cong \ff/\rr$ be a Lie algebra over $\kk$ presented as a quotient of a free Lie algebra. Then 
$$H_2^{\sf CE}(\gg)\cong \frac{\rr \cap [\ff,\ff]}{[\rr,\ff]}.$$
\end{proposition}
\begin{proof}
We set $\tilde \Lambda^2 \gg  :={\sf Coker}(\Lambda^3 \gg \to \Lambda^2 \gg ),$ where $\Lambda^3 \gg \to \Lambda^2 \gg $ is the differential from ${\sf CE}_\bullet(\gg)$. Then 
$$ H_2^{\sf CE}(\gg) = {\sf Ker}( \tilde \Lambda^2 \gg \xrightarrow{\ [\cdot,\cdot]\ } \gg).$$
Since $H_2^{\sf CE}(\ff)=0,$ we obtain an isomorphism $\tilde \Lambda^2\ff \cong [\ff,\ff]$ given by $a\wedge b\mapsto [a,b].$ 

It is well known that for a short exact sequence $0\to A\to B\to C\to 0$ of $\kk$-modules there is an exact sequence $A\otimes B \to \Lambda^2 B \to \Lambda^2 C\to 0.$ 
Therefore we have the  exact sequence 
$\rr\otimes \ff\to \Lambda^2 \ff \to \Lambda^2 \gg \to 0.$ Taking the quotients by the images of the exterior cubes, we obtain an exact sequence  
$$\rr \otimes \ff \to \tilde \Lambda^2 \ff \to \tilde\Lambda^2 \gg\to 0.$$ If we combine it with the isomorphism $[\ff,\ff]\cong \tilde \Lambda^2 \ff,$ we obtain 
$$\tilde \Lambda^2 \gg \cong {\rm Coker}(\rr\otimes\ff \to [\ff,\ff])\cong \frac{[\ff,\ff]}{[\rr,\ff]}.$$ 
Therefore $H_2^{\sf CE}(\gg)={\rm Ker}\left( \frac{[\ff,\ff]}{[\rr,\ff]} \to \gg\right) = \frac{\rr\cap [\ff,\ff]}{[\rr,\ff]}.$
\end{proof}

\begin{theorem} \label{th_hopf}
Let $\kk$ be a principal ideal domain and $\gg=\ff/\rr$ be a Lie algebra over $\kk$ presented as a quotient of a free algebra. Then all types of the second homology that we consider are naturally isomorphic and isomorphic to $\frac{\rr \cap [\ff,\ff]}{[\rr,\ff]}:$
$$H_{2}^{\sf CE}(\gg)\cong H_2^{\sf S}(\gg)\cong H_2^{\sf RS}(\gg)\cong H^{\sf RT}_2(\gg) \cong  H_2^{\sf T}(\gg)\cong \frac{\rr \cap [\ff,\ff]}{[\rr,\ff]}.$$
\end{theorem}
\begin{proof} By Corollary \ref{cor_H_2S} we have $H_2^{\sf CE}(\gg)\cong H_2^{\sf S}(\gg).$ 
By Theorem \ref{th_RS} over a principal ideal domain we have  $H_2^{\sf CE}(\gg)\cong H_2^{\sf RS}(\gg)\cong  H_2^{\sf RT}(\gg).$  By Proposition \ref{prop:T-RT} we have an isomorphism $H_2^{\sf RT}(\gg) \cong H_2^{\sf T}(\gg).$ By Proposition \ref{prop:hopfCE} we have $H_2^{\sf CE}(\gg)\cong \frac{\rr\cap [\ff,\ff]}{[\rr,\ff]}.$
\end{proof}

The rest of this section is devoted to constructing a commutative ring $\kk$ and Lie algebra $\gg$ over $\kk$ such that $H_2^{\sf S}(\gg)\not\cong H_2^{\sf RT}(\gg).$ 

Note that for any $\kk$-module $M$ the universal enveloping algebra of $\FF(M)$ is the tensor algebra
$$U(\FF(M))\cong T(M).$$ In particular, we have a morphism of Lie algebras 
$\FF(M)\to T(M)$ which is not necessarily injective because the Poincar\'e–Birkhoff–Witt property holds only under certain conditions. We set 
$$\FF'(M):={\rm Im}(\FF(M)\to T(M)).$$
\begin{lemma}\label{lemma_F'}
For any commutative ring $\kk$ and any $\kk$-module $M$ 
$$H_n^{\sf RT}(\FF'(M))=0, \hspace{1cm} n\geq 2.$$
\end{lemma}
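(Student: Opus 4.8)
The plan is to identify the universal enveloping algebra of $\FF'(M)$ with the tensor algebra $T(M)=U(\FF(M))$, and then to quote Lemma~\ref{lemma_RT-hom_of_free_rel}, since $H^{\sf RT}_*(\gg)={\sf Tor}^{(U\gg,\kk)}_*(\kk,\kk)$ depends only on the pair $(U\gg,\kk)$ together with the augmentation $U\gg\to\kk$. Once one knows $U(\FF'(M))\cong T(M)$ as augmented $\kk$-algebras, the proof of Lemma~\ref{lemma_RT-hom_of_free_rel} applies verbatim: the augmentation ideal $I(T(M))\cong M\otimes T(M)$ is $(T(M),\kk)$-projective, so $\dots\to 0\to I(T(M))\to T(M)$ is a $\kk$-split $(T(M),\kk)$-projective resolution of $\kk$, and hence $H^{\sf RT}_n(\FF'(M))=0$ for $n\geq 2$.

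First I would factor the canonical Lie algebra map $j:\FF(M)\to T(M)$ as $\FF(M)\overset{p}{\epi}\FF'(M)\overset{\iota'}{\hookrightarrow}T(M)$, where $\iota'$ is the inclusion of $\FF'(M)$ as a Lie subalgebra of $T(M)$ under the commutator bracket. Applying the enveloping algebra functor to $p$ gives an algebra homomorphism $U(p):T(M)=U(\FF(M))\to U(\FF'(M))$, which is surjective because $\FF'(M)=p(\FF(M))$ generates $U(\FF'(M))$ as an algebra. On the other hand, the universal property of $U(\FF'(M))$ applied to the Lie map $\iota'$ yields an algebra homomorphism $\psi:U(\FF'(M))\to T(M)$.

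Next I would carry out the universal-property diagram chase: $\psi\circ U(p)$ is an algebra endomorphism of $T(M)=U(\FF(M))$ whose restriction to the canonical image of $\FF(M)$ equals $\iota'\circ p=j$, so by uniqueness $\psi\circ U(p)$ is the canonical isomorphism $U(\FF(M))\cong T(M)$. Hence $U(p)$ is split injective; being also surjective, it is an isomorphism $T(M)\cong U(\FF'(M))$. Finally, one checks that this isomorphism is compatible with the units from $\kk$ and with the augmentations (both augmentations restrict to zero on the respective copies of the Lie algebra, so uniqueness again applies), and then $H^{\sf RT}_n(\FF'(M))={\sf Tor}^{(U(\FF'(M)),\kk)}_n(\kk,\kk)\cong {\sf Tor}^{(T(M),\kk)}_n(\kk,\kk)=H^{\sf RT}_n(\FF(M))=0$ for $n\geq2$ by Lemma~\ref{lemma_RT-hom_of_free_rel}. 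The only point that requires care is the diagram chase establishing $U(\FF'(M))\cong T(M)$; everything else is immediate, so I do not expect a genuine obstacle here.
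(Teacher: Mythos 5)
Your proposal is correct and follows essentially the same route as the paper: both establish $U(\FF'(M))\cong T(M)$ by factoring the identity of $T(M)=U\FF(M)$ through the surjection $U\FF(M)\to U\FF'(M)$ induced by $\FF(M)\epi\FF'(M)$ and the map $U\FF'(M)\to T(M)$ coming from the inclusion $\FF'(M)\hookrightarrow T(M)$, and then invoke Lemma \ref{lemma_RT-hom_of_free_rel}. Your version merely spells out the universal-property diagram chase (and the compatibility with augmentations) in more detail than the paper does.
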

\begin{proof}
Since the map $\FF(M)\to \FF'(M)$ is surjective, the map $T(M)=U\FF(M)\to U\FF'(M)$ is also surjective. On the other hand the map $\FF(M)\to T(M)$ factors as $\FF(M)\to \FF'(M)\to T(M).$ Therefore, the identity map $T(M)\to T(M)$ factors as $T(M)\to U\FF'(M)\to T(M),$ where $T(M)\to U\FF'(M)$ is an epimorphism. It follows that $U\FF'(M)\cong T(M).$ Hence $H^{\sf RT}_*(\FF'(M))=H^{\sf RT}(\FF(M)).$ The assertion follows from Lemma \ref{lemma_RT-hom_of_free_rel}.
\end{proof}

\begin{proposition}\label{prop_counter_H_2}
Let $\kk=\mathbb F_2[t_1,t_2,t_3]/(t_1^2,t_2^2,t_3^2)$ and $M=\kk^3/\langle t_1e_1+t_2e_2+t_3e_3\rangle$ (as in Proposition \ref{prop_counter_Kos} for $n=p=2$). Assume that $\gg=\FF'(M).$ Then 
$$H^{\sf CE}_2(\gg)\ne 0=H^{\sf RT}_2(\gg).$$
\end{proposition}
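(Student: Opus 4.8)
The vanishing $H_2^{\sf RT}(\gg)=0$ is immediate: Lemma \ref{lemma_F'} gives $H_n^{\sf RT}(\FF'(M))=0$ for all $n\geq 2$, and by hypothesis $\gg=\FF'(M)$. So the content of the proposition is the non-vanishing of $H_2^{\sf CE}(\gg)$, and this is exactly the place where passing from $\FF(M)$ to its image $\FF'(M)$ inside $T(M)$ matters.

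The plan for $H_2^{\sf CE}(\gg)\neq 0$ is to isolate the component of weight $2$ in the commutator-weight grading. The free Lie algebra carries a grading $\FF(M)=\bigoplus_{n\geq 1}\FF(M)_n$ by commutator weight, with $\FF(M)_1\cong M$ and $\FF(M)_2\cong\Lambda^2 M$ just as in the proof of Proposition \ref{prop_counter_RSCE}. The canonical map $\FF(M)\to T(M)$ is homogeneous for the tensor grading of $T(M)$, so $\gg=\FF'(M)$ is a graded subalgebra $\gg=\bigoplus_{n\geq 1}\gg_n$ with $\gg_n={\sf Im}(\FF(M)_n\to M^{\otimes n})$. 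Hence $\gg_1=M$, and since $\FF(M)_2$ is spanned by length-two brackets and $[m_1,m_2]=m_1\otimes m_2-m_2\otimes m_1$ in $T(M)$, one gets $\gg_2={\sf Im}(\bar b)$, where $\bar b\colon\Lambda^2 M\to M^{\otimes 2}$, $a\wedge b\mapsto a\otimes b-b\otimes a$, is the map of Corollary \ref{cor_counter_kos}. This grading on $\gg$ induces one on ${\sf CE}_\bullet(\gg)$ for which the differential is homogeneous, so $H_2^{\sf CE}(\gg)=\bigoplus_w H_2^{\sf CE}(\gg)_w$ and it is enough to make one weight nonzero.

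I would then write out the weight-$2$ strand ${\sf CE}_3(\gg)_2\to{\sf CE}_2(\gg)_2\to{\sf CE}_1(\gg)_2$. Because every nonzero homogeneous element of $\gg$ has weight $\geq 1$, this strand is $0\to\Lambda^2(\gg_1)\to\gg_2$, i.e.\ $0\to\Lambda^2 M\xrightarrow{\partial_2}{\sf Im}(\bar b)$, and under these identifications $\partial_2$ is, up to sign, the corestriction of $\bar b$ onto its image --- hence surjective with kernel ${\sf Ker}(\bar b)$. Since $\partial_3$ contributes nothing in weight $2$, this gives $H_2^{\sf CE}(\gg)_2\cong{\sf Ker}\big(\bar b\colon\Lambda^2 M\to M^{\otimes 2}\big)$, and by Corollary \ref{cor_counter_kos} this kernel is nonzero --- it contains $t_1e_1\wedge t_2e_2$ --- so $H_2^{\sf CE}(\gg)\neq 0$.

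The argument is almost entirely bookkeeping; the step that needs genuine care is the identification of the weight-$2$ part of the Chevalley--Eilenberg differential, namely checking that $\gg_2$ is exactly ${\sf Im}(\bar b)$ and that $\partial_2$ in weight $2$ is precisely the surjection $\Lambda^2 M\epi{\sf Im}(\bar b)$, which amounts to tracking signs and identifications through the map $\FF(M)\to T(M)$. Once this is pinned down, the appeal to Corollary \ref{cor_counter_kos} closes the proof, and no input beyond results already established in the paper is required.
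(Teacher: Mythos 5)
Your proposal is correct and follows essentially the same route as the paper's proof: use Lemma \ref{lemma_F'} for the vanishing of $H_2^{\sf RT}$, grade $\gg=\FF'(M)$ by commutator weight, observe that the weight-two strand of ${\sf CE}_\bullet(\gg)$ is $0\to\Lambda^2\gg_1\to\gg_2$ with differential the (corestricted) map $a\wedge b\mapsto a\otimes b-b\otimes a$, and conclude nonvanishing from its nontrivial kernel (Proposition \ref{prop_counter_Kos}/Corollary \ref{cor_counter_kos}). No gaps.
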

\begin{proof} The equation $H^{\sf RT}_2(\gg)=0$ follows from Lemma \ref{lemma_F'}. The algebra $\gg$ has a natural grading $\gg=\bigoplus_{n\geq 1} \gg_n$ by the weight of commutators such that $\gg_n=\gg\cap M^{\otimes n}.$ Then the complex ${\sf CE}_\bullet(\gg)$ inherits a natural grading, whose second component is the complex $0\to \Lambda^2\gg_1 \to \gg_2\to 0.$ Then it is enough to prove that the kernel of the map $\Lambda^2\gg_1 \to \gg_2$ is nontrivial. 
Since $\gg_1=M$ and $\gg_2\subseteq M\otimes M,$ the kernel of this map is equal to the kernel of the map $\partial:\Lambda^2 M \to M\otimes M$ which is nontrivial by Proposition \ref{prop_counter_Kos}. 
\end{proof}

\section{Appendix: PBW-property}

An example of a Lie algebra over a commutative ring that does not satisfy  the Poincar\'e-Birkhoff-Witt property is given in \cite[\S 5]{cohn1963remark}. However
some details of the proof are omited. Here we present a related example with this property with a more detailed proof which is based on our counterexample to acyclicity of the Koszul complex (Corollary  \ref{cor_counter_kos}). 

For any module $M$ over any commutative ring $\kk$ one can consider the module 
$$\gg:=M\oplus \Lambda^2 M$$ 
as a Lie algebra over $\kk$ taking $[a,b]=a\wedge b$ for $a,b\in M$ and $[-,\alpha]=[\alpha,-]=0$ for   $\alpha\in \Lambda^2M.$ This is a nilpotent Lie algebra of class 2.  

\begin{proposition}
 Let $M$ be a $\kk$-module such that the map $$\partial: \Lambda^2 M\to M\otimes M, \hspace{1cm} \partial(a\wedge b)= a\otimes b - b\otimes a$$ is not injective (as in Corollary \ref{cor_counter_kos}) and $\gg=M\oplus \Lambda^2 M$. Then the map to the universal enveloping algebra $$\varphi: \gg\to U\gg$$ is not injective. 
\end{proposition}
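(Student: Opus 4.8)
The plan is to prove that $\varphi$ already fails to be injective on the direct summand $\Lambda^2 M$ of $\gg$, by recognizing $\varphi|_{\Lambda^2 M}$ as a map that factors through $\partial$. Recall that $U\gg = T(\gg)/\langle x\otimes y-y\otimes x-[x,y]\rangle$ and that $\varphi$ is the composite $\gg\hookrightarrow T(\gg)\twoheadrightarrow U\gg$, so that $\varphi(x)\varphi(y)-\varphi(y)\varphi(x)=\varphi([x,y])$ in $U\gg$ for all $x,y\in\gg$.

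First I would introduce the associative algebra homomorphism $\tilde\varphi\colon T(M)\to U\gg$ extending the $\kk$-linear map $\varphi|_M\colon M\to U\gg$, which exists by the universal property of the tensor algebra; on tensors of length two it is $a\otimes b\mapsto\varphi(a)\varphi(b)$. The key observation is then that
\[
\varphi|_{\Lambda^2 M}=\tilde\varphi\circ\partial\colon\ \Lambda^2 M\longrightarrow U\gg ,
\]
where $\partial(a\wedge b)=a\otimes b-b\otimes a$ is exactly the map of Corollary~\ref{cor_counter_kos}. It suffices to check this on a decomposable generator $a\wedge b$ of $\Lambda^2 M$: since $[a,b]=a\wedge b$ with $a,b\in M\subseteq\gg$, the defining relation of $U\gg$ gives $\tilde\varphi(\partial(a\wedge b))=\varphi(a)\varphi(b)-\varphi(b)\varphi(a)=\varphi([a,b])=\varphi(a\wedge b)$; as both composites are $\kk$-linear and the elements $a\wedge b$ generate $\Lambda^2 M$, the identity follows.

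From this factorization one immediately obtains $\ker(\partial)\subseteq\ker(\varphi|_{\Lambda^2 M})\subseteq\ker(\varphi)$, so the hypothesis that $\partial$ is not injective gives $\ker(\varphi)\neq 0$, i.e. $\varphi$ is not injective. In the concrete case, taking $M=\kk^3/\langle t_1e_1+t_2e_2+t_3e_3\rangle$ over $\kk=\mathbb F_2[t_1,t_2,t_3]/(t_1^2,t_2^2,t_3^2)$, Corollary~\ref{cor_counter_kos} shows that $t_1e_1\wedge t_2e_2\in\Lambda^2 M$ is nonzero and lies in $\ker\partial$, hence $\varphi(t_1e_1\wedge t_2e_2)=0$ while $t_1e_1\wedge t_2e_2\neq 0$ in $\gg$. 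There is no real obstacle beyond keeping the universal-property bookkeeping straight; the whole content is that the restriction of $\varphi$ to the ``bracket part'' $\Lambda^2 M$ of $\gg$ is literally $\tilde\varphi$ precomposed with the Koszul differential $\partial\colon\Lambda^2 M\to M\otimes M$, so the very element that obstructs acyclicity of ${\sf Kos}_\bullet(M,2)$ obstructs the Poincar\'e-Birkhoff-Witt property.
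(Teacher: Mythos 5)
Your proof is correct and follows essentially the same route as the paper: the paper writes the key factorization as $\varphi_2 = \mu\circ(\varphi_1\otimes\varphi_1)\circ\partial$, which is exactly your $\varphi|_{\Lambda^2 M}=\tilde\varphi\circ\partial$ with $\tilde\varphi$ the induced algebra map $T(M)\to U\gg$ restricted to degree two. You have merely spelled out the universal-property bookkeeping that the paper compresses into ``it is easy to check.''
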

\begin{proof}  Since $\gg=M\oplus \Lambda^2 M$ is a direct sum, $\varphi$ consists of two components $\varphi_1:M\to U\gg$ and $\varphi_2:\Lambda^2 M \to U\gg.$ It is easy to check that 
$$\varphi_2=\mu \circ (\varphi_1\otimes \varphi_1)\circ \partial,$$
where $\mu:U\gg \otimes U\gg \to U\gg$ is the  multiplication map. 
Hence $\varphi_2$ is not injective.
\end{proof}

\printbibliography

\end{document}